\newtheorem{theorem}{Theorem}[section]
\newtheorem{lemma}[theorem]{Lemma}
\newtheorem{cor}[theorem]{Corollary} 
\newtheorem{prop}[theorem]{Proposition} 
\theoremstyle{definition}
\newtheorem{defi}[theorem]{Definition}
\theoremstyle{remark}
\newtheorem{remark}[theorem]{Remark}
\numberwithin{equation}{section}
\def\ii{{\bf i}}
\newcommand{\NN}{\mathbf{N}}
\newcommand{\eeta}{\widehat{\eta}}
\newcommand{\ind}[1]{\mathbf{1}_{\{#1\}}}
\newcommand{\Ind}[1]{\mathbf{1}_{#1}}
\newcommand{\E}{\mathbb{E}} 
\newcommand{\p}{\mathbb{P}} 
\newcommand{\R}{\mathbb{R}} 
\newcommand{\N}{\mathbb{N}}
\newcommand{\XX}{\mathbb{X}}
\newcommand{\YY}{\mathbb{Y}}
\newcommand{\Leb}{\mathcal{L}}
\newcommand{\PP}{\mathcal{P}}
\newcommand{\loglog}{\log\log}
\DeclareMathOperator{\Var}{Var}	
\DeclareMathOperator{\Cov}{Cov}
\DeclareMathOperator*{\esssup}{esssup}
\DeclareMathOperator{\Vol}{Vol}
\title[Exponential inequalities and LIL for multiple Poisson integrals]{Exponential inequalities and laws of the iterated logarithm for multiple Poisson--Wiener integrals and Poisson $U$-statistics}
\author{Rados{\l}aw Adamczak}
\address{Institute of Mathematics, University of Warsaw, ul. Banacha 2, 02-097 Warszawa, Poland}
\email{r.adamczak@mimuw.edu.pl}
\author{Dominik Kutek}
\address{Institute of Mathematics, University of Warsaw, ul. Banacha 2, 02-097 Warszawa, Poland}
\email{d.kutek@mimuw.edu.pl}
\date{}
\keywords{Poisson point process, $U$-statistics, multiple stochastic integrals, concentration of measure, The Law of the Iterated Logarithm}
\subjclass{Primary: 60E15, 60G55, 60F15.  Secondary: 60D05, 05C80}
\begin{document}
\maketitle
\begin{abstract}
We prove tail and moment inequalities for multiple stochastic integrals on the Poisson space and for Poisson $U$-statistics.  We use them to demonstrate the Law of the Iterated Logarithm for these processes when the intensity of the Poisson process tends to infinity, with normalization depending on the degree of the multiple stochastic integral or degeneracy of the kernel defining the $U$-statistic.

We apply our results to several classical functionals of Poisson point processes, obtaining improvements or complements of known concentration of measure results as well as new laws of the iterated logarithm. Examples include subgraph counts and power length functionals of geometric random graphs, intersections of Poisson $k$-flats, quadratic functionals of the Ornstein--Uhlenbeck L\'evy process and $U$-statistics of marked processes.
\end{abstract}

\section{Introduction}
\subsection*{Motivation and informal description of main results}

Chaos expansions, going back to the work of Wiener \cite{MR1507356} and It\^{o} \cite{MR0077017}, constitute an important ingredient of infinite-dimensional stochastic analysis. They play a role in the definition of Skorokhod integrals and in Malliavin calculus \cite{MR2200233,MR1474726,MR2791919}. They also underlie the recent theory of Malliavin--Stein method for proving weak convergence \cite{MR2962301,MR2641769,MR3336636,MR3379337,MR3585401,MR3687774}, which grew out of the famous fourth moment theorem by Nualart and Peccati \cite{MR2118863}. In this context they have been investigated not only in the original Gaussian setting, but also in other situations, most importantly on the discrete cube and in the Poisson space. In the former case they are known as Walsh expansions and have been studied for many years, mostly from the point of view of functional analysis and discrete harmonic analysis (see, e.g., \cite{MR3752640,MR1036275}). Chaos expansion for Poisson processes on the real line was first obtained by It\^{o} \cite{MR0077017}, however its systematic study for abstract Poisson spaces is more recent, starting with the work \cite{MR2824870} by Last and Penrose, and has been motivated to a large extent by developments related to stochastic geometry and statistics (see, e.g., \cite{MR3791470,MR3444831,MR3161465,MR3585404,MR3813981,MR3896829}).

The goal of this article is to provide sharp tail and moment estimates for the building blocks of chaos expansions on the Poisson space, i.e., for multiple Wiener--It\^o integrals, and to apply them to the study of variables with finite chaos decomposition, in particular to Poisson $U$-statistics. This class of random variables allows to encode many important functionals of Poisson point processes, for instance the number of subgraphs of the Gilbert graph isomorphic to a fixed graph, length power functionals, intrinsic volumes of intersections of Poisson hyperplane processes with a fixed window, and estimators of Sylvester type probabilities \cite{MR3161465,MR3485348,MR3849811,MR3585402}. Poisson $U$-statistics are also important in an applied context, see, e.g., \cite{MR3447000,MR3585404} for statistical applications motivated by analysis of astrophysical data. In recent years Poisson $U$-statistics have been an object of intense studies, with particular focus on limit theorems with quantitative bounds and concentration of measure results (see, e.g., \cite{MR3161465,MR3585402,MR3485348,MR3849811,MR3096352,MR3161465,bonnet2024concentration}).
Our objective is twofold: first, to derive inequalities for multiple Wiener--It\^o integrals on the Poisson space and Poisson $U$-statistics, which provide multi-level concentration, similarly as in the case of known inequalities for multiple Gaussian stochastic integrals \cite{MR2294983}, canonical $U$-statistics in independent random variables or multiple stochastic integrals with respect to processes with independent increments on the line \cite{MR1857312,MR2073426,MR2294982}; second, to use these inequalities to complement known results on convergence in distribution of Poisson $U$-statistics and multiple stochastic integrals with laws of the iterated logarithm.

Since a rigorous statement of our results requires quite involved notation, we postpone it to Section \ref{sec:main-results}, and here we describe them in a slightly informal setting and only in special cases.

Let thus $\eta$ be a Poisson point process on a space $\XX$, with a $\sigma$-finite intensity $\lambda$. Consider also a Poisson proces $\eeta$ on $\XX\times [0,\infty)$ with intensity $\lambda \otimes \Leb$, where $\Leb$ is the Lebesgue measure. We interpret the process $\eeta$ in the usual way, the first coordinate of a point from $\eeta$ describes the \emph{spatial} position of the \emph{particle}, while the second one its \emph{time of arrival}.  By $\eta_t$ we denote the projection on $\XX$ of the process $\eeta$ restricted to $\XX\times [0,t]$, i.e., $\eta_t$ consists of locations of particles which have arrived up to time $t$. Such a spatiotemporal setting is used, e.g., in statistics, when modelling consecutive observations arriving in time. Note that $\eta_1$ has the same distribution as $\eta$.

For a symmetric function $g\colon \XX^d \to \R$ in $L_2(\XX^d,\lambda^{\otimes d})$ and $t \ge 0$ we consider the multiple stochastic Wiener--It\^{o} integral of $g$ with respect to $\eta_t$ and denote it by $I^{(d)}_t(g)$.

Our first main result, given in Theorem \ref{thm:tails-and-moments-main}, is a concentration estimate of the form
\begin{align}\label{eq:main-ineq-intro}
  \p(\sup_{t\le T} |I^{(d)}_t(g)| \ge u) \le 2\exp\Big(-c_d \min_{1\le k\le 2d} \Big(\frac{u}{A_k(T)}\Big)^{2/k}\Big),
\end{align}
where $c_d$ is a constant depending only on $d$ and $A_k$ are certain norms of the function $g$. We defer the exact definition of the parameters $A_k$ to Section \ref{sec:main-results}, here we only mention that they are related to certain injective tensor product norms of the kernel $g$, interpreted as a multilinear functional. Importantly, $A_1$ is comparable to the variance of $I^{(d)}_T(g)$ and $A_{2d} = \|g\|_\infty$. The advantage of the estimate \eqref{eq:main-ineq-intro} is related to the form of parameters $A_k$ and comes from two important properties. Under appropriate scaling in the limit as $T \to \infty$ it recovers the estimates for Gaussian stochastic integrals due to Latała \cite{MR2294983}, which are known to be two-sided (up to constants depending only on $d$). Moreover, it is strong enough to imply (after some preliminary truncation arguments) the Law of the Iterated Logarithm (LIL) for the process $(I^{d}_t(g))_{t\ge 0}$, which is our second main result, Theorem \ref{thm:LIL-integrals}. It asserts that for a square integrable $g$, with probability one,
\begin{displaymath}
  \limsup_{t\to \infty}\frac{|I^{(d)}_t(g)|}{(t\log\log t)^{d/2}} < \infty
\end{displaymath}
and the cluster set of the function $t\to \frac{I^{(d)}_t(g)}{(2t\loglog t)^{d/2}}$ as $t\to \infty$ is almost surely equal to the set
\begin{align}\label{eq:cluster-set-intro}
  \mathcal{K}_g = \Big\{\int_{\XX^d} g(x_1,\ldots,x_d)\varphi(x_1)\cdots \varphi(x_d)d\lambda^{\otimes d}(x_1,\ldots,x_d)\colon\; \varphi \colon \XX \to \R, \int_\XX \varphi^2 d\lambda \le 1\Big\}.
\end{align}

This result is an analogue of the compact LIL for $U$-statistics in independent random variables due to Arcones and Gin\'e \cite{MR1348376}. It complements in a natural way earlier results on the weak convergence of $t^{-d/2} I^{(d)}_t(g)$ due to Lachi\`eze-Rey and Peccati \cite{MR3096352}.

Results for multiple stochastic integrals are interesting in their own right, however a part of our motivation comes from the analysis of Poisson $U$-statistics, which as we have already mentioned, represent many functionals of interests in particular in stochastic geometry and statistics. Recall first that classical $U$-statistics, based on an i.i.d. sample $Y_1,Y_2,\ldots$ and a symmetric function $g$ of $d$ arguments, are random variables of the form
\begin{align}\label{eq:classical-Ustat}
  \sum_{1\le i_1 \neq \ldots \neq i_d \le n} h(Y_{i_1},\ldots,Y_{i_d}).
\end{align}
They were introduced by Halmos \cite{MR0015746} and Hoeffding \cite{MR0026294} in the 1940s in the context of unbiased estimation. They play an important role in asymptotic statistics not only as estimators, but also as higher order terms in Taylor expansions of smooth functionals of the data. They also appear as tools in combinatorics, stochastic geometry and statistical physics, and from a purely probabilistic point  of view they serve as a natural model of sums of dependent random variables, since despite their simple definition they exhibit quite complex behaviour. We recommend the monographs \cite{MR1666908,MR1075417} for a detailed account of their theory.

Poisson $U$-statistics are natural counterparts of \eqref{eq:classical-Ustat}. For a $\lambda^{\otimes d}$-integrable symmetric function $g\colon \XX^d\to \R$, the Poisson $U$-statistic based on $g$ is defined as
\begin{displaymath}
  U(g,\eta) = \sum_{i_1,\ldots, i_d}^{\neq} g(X_{i_1},\ldots,X_{i_d}),
\end{displaymath}
where $X_i$'s are the points of the Poisson process $\eta$ and the summation is over all $d$-tuples $(i_1,\ldots,i_d)$ with pairwise distinct coordinates.

A square integrable $U$-statistic has a finite chaos expansion, i.e., it can be represented as a finite sum of multiple stochastic integrals,
\begin{align}\label{eq:Ustat-chaos-expansion-intro}
U(g,\eta) - \E U(g,\eta)= \sum_{n=1}^d \binom{d}{n}I^{(n)}_t(g_n)
\end{align}
for certain (explicit) functions $g_n$ (see \cite{MR3161465} and formula \eqref{eq:U-stat-chaos-kernels} below).

Inequalities and laws of the iterated logarithm for stochastic integrals can be therefore easily transferred to the process $(U(g,\eta_t))_{t\ge 0}$. We defer the precise formulations to Corollaries \ref{cor:U-stat-tail} and \ref{cor:Ustat-LIL}. Here let us only explain that in the Law of the Iterated Logarithm we will consider the setting of fixed $g$, sometimes called the \emph{geometric case} in the literature (as the kernel does not change with growing intensity, i.e., it depends only on $\lambda$ and not on $t$), while our inequalities will be also applied to analyze the \emph{local case}, when $\XX$ is endowed with a metric $\rho$ and one considers functions of the form (for simplicity of the exposition we restrict here to $d=2$)
\begin{displaymath}
g_t(x,y) = g(x,y)\ind{\rho(x,y)\le r_t}
\end{displaymath}
with $r_t \to 0$ as $t\to \infty$. In the simplest case of $g(x,y)\equiv 1$, the $U$-statistic based on this particular function $g$ equals to a double number of edges in the geometric random Gilbert graph. Depending on the exact behaviour of $r_t$ as $t\to \infty$, it may happen that various summands in the chaos decomposition \eqref{eq:Ustat-chaos-expansion-intro} dominate. Our inequalities allow to asymptotically capture this behaviour at the level of tail inequalities in a certain growing window of parameters. We provide the examples in Section \ref{sec:examples}. They complement known results \cite{MR3485348,MR3849811,schulte2023moderate,bonnet2024concentration} which are focused mostly on the situation in which the first term in the chaotic expansion dominates (which corresponds to what is known as dense and thermodynamic regimes), by capturing Gaussian tail estimates in all regimes.

\subsection*{Connections with previous results}

As mentioned above, the chaos decomposition property for Poisson processes goes back to the work by It\^{o} \cite{MR0077017}. For abstract Poisson spaces it was proved by Last and Penrose in \cite{MR2824870}. In the following years a lot of attention has been devoted to Malliavin calculus and the Malliavin--Stein method in the Poissonian context, see the book \cite{MR3444831} for a nice exposition. Starting from the early 2000s many authors have also investigated the concentration of measure phenomenon on the Poisson space, by means of various tools, e.g., isoperimetric type estimates, martingale methods, functional inequalities, and transportation of measure techniques. The (probably non-exhaustive) list of references is \cite{MR1962538,MR2073426,MR2317609,MR2458025,MR3151752,MR3485348,MR3473096,MR3849811,MR4372142,MR4282691,MR4283247}.

In particular, inequalities for Poisson $U$-statistics were considered by Bachmann and Peccati \cite{MR3485348} and Bachmann and Reitzner \cite{MR3849811}. The first article provides certain abstract inequalities for functionals with self-bounding properties, which can be verified under appropriate assumptions on the kernel of the $U$-statistic. The second one deals with a special class of kernels  corresponding to subgraph counts in the Gilbert graph. Further inequalities are proved in recent articles by Schulte and Th\"ale \cite{schulte2023moderate} and Bonnet and Gusakova \cite{bonnet2024concentration}. They are based on assumptions concerning bounds on moments of all order of the kernel of the $U$-statistics, which can be verified in many case of interest, especially if the kernel is nonnegative. In particular,  \cite{bonnet2024concentration} complements earlier results with a Poisson type logarithmic correction in the estimate on $\p(|U(g)| \ge u)$ for large values of $u$.

Our inequalities for stochastic integrals and subsequent corollaries to $U$-statistics are extensions of results for double stochastic integrals with respect to Poisson processes on the line, which were proven by Houdr\'{e} and Reynaud-Bouret \cite{MR2073426}. They were counterparts of tail and moment estimates for $U$-statistics of order 2 in independent random variables, obtained by Gin\'e--Lata{\l}a--Zinn \cite{MR1857312}. These inequalities were later extended in \cite{MR2294982} to $U$-statistics and stochastic integrals of arbitrary order with respect to processes with independent increments. Our approach is inspired by this article. The common point of the inequalities we present and those from aforementioned papers is the presence of a family of norms (see Section \ref{sec:inequalities} for the definitions), indexed by partitions of finite sets. In the simplest case such norms appeared for the first time as parameters in exponential tail estimates in the Hanson--Wright inequality for quadratic forms in independent subgaussian random variables, in disguise of the operator norm of the matrix. In full generality they were introduced in the context of tail estimates by Latała, in his two-sided inequalities for homogeneous polynomials in independent Gaussian variables \cite{MR2294983} (see also Remark \ref{re:discussion-of-norms} below). The inequalities for $U$-statistics proven in \cite{MR2294982} were subsequently used in \cite{MR2408582} to derive necessary and sufficient conditions for the bounded Law of the Iterated Logarithm for $U$-statistics. In our proof of the LIL we will follow some ideas from the latter article as well as from \cite{MR1348376}.

To the best of our knowledge there are not too many results in the literature, which consider the Law of the Iterated Logarithm for Poisson functionals. The case of single stochastic integrals and $U$-statistics of order one can be inferred from classical results for L\'{e}vy processes due to Gnedenko \cite{MR0009265}. As for higher order $U$-statistics, the Law of the Iterated Logarithm with normalization $t^{d-1/2}\sqrt{\loglog t}$ (which corresponds to the situation when the function $g_1$ in \eqref{eq:Ustat-chaos-expansion-intro} is nonzero) has been obtained by Matthias Reitzner in an unpublished note. In \cite{MR4260506}, Krebs considers the homogeneous Poisson process on $\R^d$ and shows the LIL in discrete time with normalization $\sqrt{n\loglog n}$ for Poisson functionals with distribution invariant under translations, satisfying certain \emph{stabilizing property}. He considers also the LIL for fixed intensity and growing window of observation, which we do not discuss in this paper. Laws of the iterated logarithm for sums of stabilizing functionals have been also considered in \cite{MR2548003}. Another article concerning laws of the iterated logarithm related to Poisson point processes is \cite{MR2451052}, which studies the case of additive functionals of random sequential packing measures on the cube. We would like to stress that in all these results the limiting set is a symmetric interval, they do not involve more complex phenomena when the limiting set is given by the set $\mathcal{K}_g$ defined in \eqref{eq:cluster-set-intro}.

\subsection*{Organization of the article} In Section \ref{sec:notation} we introduce the setting and basic notation. Next in Section \ref{sec:inequalities} we present our main results concerning moment and tail estimates, and in Section \ref{sec:LIL} we state the laws of the iterated logarithm. Section \ref{sec:examples} is devoted to applications of our abstract results to specific models and functionals. The proofs of these results are provided in Sections \ref{sec:inequalities-proofs} (exponential inequalities), \ref{sec:lil-proofs} (LIL) and \ref{sec:proofs-applications} (applications). In the Appendix we present certain auxiliary results on decoupling inequalities and proofs of technical lemmas introduced in Section \ref{sec:proofs}.

\subsection*{Acknowledgements} We would like to thank Matthias Reitzner for inspiring conversations and for communicating to us his results on the LIL for Poisson $U$-statistics, and Christoph Th\"ale for providing us with references concerning LIL for Poisson functionals.

\section{Main results}\label{sec:main-results}

\subsection{Notation and setting} \label{sec:notation}

We will now describe our setting, in particular we will briefly recall the basic notions related to Poisson point processes, multiple stochastic integrals and Wiener--It\^{o} expansions. We refer, e.g., to the monograph \cite{MR3791470} for a detailed exposition.

\subsection*{Generalities on Poisson processes}

Let $(\XX,\mathcal X)$ be a measurable space, equipped with a $\sigma$-finite measure $\lambda$. For technical reasons we will assume that $\mathcal{X}$ is countably generated. Let also $\NN$ be the set of all measures on $\mathcal{X}$ which can be represented
as a countable sum of $\N$-valued measures. We endow $\NN$ with the smallest $\sigma$-field $\mathcal{N}$ such that for all $A \in \mathcal{X}$ the map $\NN \ni \mu \mapsto \mu(A)$ is Borel-measurable.

A Poisson point process on $\XX$ with intensity $\lambda$, is a random element $\eta$ of $\NN$, such that for any $n \ge 1$ and pairwise disjoint $A_1,\ldots,A_n \in \mathcal{X}$
the random variables $\eta(A_1),\ldots,\eta(A_n)$ are independent and $\eta(A_i)$ has the Poisson distribution with parameter $\lambda(A_i)$ (with natural interpretation as Dirac's deltas at zero or infinity if $\lambda(A_i) = 0$ or $\infty$, respectively).

Without loss of generality we will most of the time assume that $\eta$ is proper, i.e., there is an $\N\cup\{\infty\}$-valued random variable $\kappa$ and $\XX$-valued random variables $X_1,X_2,\ldots$ such that
\begin{align}\label{eq:proper-process}
  \eta = \sum_{n=1}^\kappa \delta_{X_n}.
\end{align}
Indeed, the standard construction of the Poisson process implies that in our setting there always exists a proper Poisson process with a given intensity $\lambda$ (see, e.g., \cite[Section 3.2]{MR3791470}).

For $d \in \N$ and $p\ge 1$, by $L_{p}(\XX^d,\lambda^{\otimes d})$ we will denote the space of all Borel measurable functions $g\colon \XX^d \to \R$, satisfying $\|g\|_p = (\int_{\XX^d} |g|^p d\lambda^{\otimes d})^{1/p} < \infty$. By $L_p^s(\XX^d,\lambda^{\otimes d})$ we denote the subspace of $L_p(\XX^d,\lambda^{\otimes d})$ consisting of functions symmetric with respect to the permutation of their arguments. Whenever it does not lead to a misunderstanding we will abbreviate the notation and write simply $L_p(\XX^d)$ or $L_p^s(\XX^d)$, suppressing the product measure $\lambda^{\otimes d}$. If $d=0$, we interpret $L_{p}(\XX^d)$ as $\R$, the space of constants.

If $g \in L_1(\XX^d)$, we define the Wiener--It\^{o} integral of $g$ with respect to $\eta$ as
\begin{align}\label{eq:L_1-integral-non-symmetric}
I^{(d)}(g,\eta) = I^{(d)}(g) = \sum_{J \subseteq [d]} (-1)^{d-|J|} \int_{\XX^{J^c}}\int_{\XX^{J}} g(x_1,\ldots,x_d) d\eta^{(|J|)}(x_J)d \lambda^{\otimes (d-|J|)}(x_{J^c})
\end{align}
where $[d]= \{1,\ldots,d\}$ and $\eta^{(k)}$ are the factorial measures of $\eta$, defined inductively as

\begin{align}\label{eq:factorial-measures}
  \eta^{(1)} &= \eta,\\
  \eta^{(k+1)} &= \int_{\XX^k}\Big[\int_{\XX} \ind{(x_1,\ldots,x_{k+1}) \in \cdot}d \eta(x_{k+1}) - \sum_{j=1}^k \ind{(x_1,\ldots,x_k,x_{j}) \in \cdot}\Big]d\eta^{(k)}(x_1,\ldots,x_k).\nonumber
\end{align}
For $k=0$ we interpret the above formulas as $I^{(0)}(c) = c$.

\medskip

If $g \in L_1^s(\XX^d)$, then \eqref{eq:L_1-integral-non-symmetric} simplifies to
\begin{align}\label{eq:L_1-integral}
  I^{(d)}(g,\eta) = I^{(d)}(g) = \sum_{k=0}^d (-1)^{d-k} {d \choose k}\int_{\XX^d} g(x_1,\ldots,x_d) d(\eta^{(k)} \otimes \lambda^{\otimes (d-k)})(x_1,\ldots,x_d).
\end{align}

If $\eta$ is given by \eqref{eq:proper-process}, then

\begin{align}\label{eq:factorial-proper}
  \eta^{(k)}(A) = \sum_{1\le i_1,\ldots,i_k\le \kappa}^{\neq} \ind{(X_{i_1},\ldots,X_{i_k}) \in A},
\end{align}
where the superscript $\neq$ indicates that the summation is taken over all k-tuples with pairwise distinct coordinates.

One can show that the linear operator $I^{(d)}$ extends to a bounded linear operator on $L_2(\XX^d)$. For $g_i \in L_2(\XX^{d_i})$, $i = 1,2$ we have
\begin{displaymath}
  \E I^{(d_1)}(g_1)I^{(d_2)}(g_2) = d_1! \ind{d_1=d_2} \int_{\XX^{d_1}} g_1^{sym} g_2^{sym}d\lambda^{\otimes d_1}.
\end{displaymath}
where $g^{sym} \in L_2(\XX^d)$ is the symmetrization of $g$, given by
\begin{align}\label{eq:symmetrization-definition}
  g^{sym}(x_1,\ldots,x_d) = \frac{1}{d!}\sum_{\sigma \in S_d} g(x_{\sigma(1)},\ldots,x_{\sigma(d)}),
\end{align}
where $S_d$ is the set of all permutations of $[d]$.

This implies that for $g \in L_2(\XX^d)$,
\begin{align}\label{eq:integral-non-symmetric-symmetric}
I^{(d)}(g) = I^{(d)}(g^{sym})
\end{align}
and, moreover, the restriction of $(d!)^{-1/2} I^{(d)}(g)$ to $L_2^s(\XX^d)$ is an isometric embedding of $L_2^s(\XX^d)$ into $L_2(\Omega,\sigma(\eta),\p)$, where $\sigma(\eta)$ is the $\sigma$-field generated by $\eta$.

In fact, we have an orthogonal decomposition
\begin{displaymath}
  L_2(\Omega,\sigma(\eta),\p) = \bigoplus_{d=0}^\infty H_d,
\end{displaymath}
where $H_d = I^{(d)}(L_2^s(\XX^d))$.

More precisely, for any $F \in L_2(\Omega,\sigma(\eta),\p)$, we have
\begin{displaymath}
  F = \sum_{d=0}^\infty \frac{1}{d!}I^{(d)}(T_dF),
\end{displaymath}
with $T_d F \colon \XX^d \to \R$ given by $T_d F(x_1,\ldots,x_d) = \E D_{x_1}\cdots D_{x_d} F(\eta)$, where for any $x \in \XX$, $D_x F(\eta) = F(\eta+\delta_x) - F(\eta)$ is the add one cost operator. Observe that with certain abuse of notation we sometimes write $F$, treating it as a random variable on $\Omega$ and sometimes we write $F(\eta)$, treating $F$ as a function on $\NN$. Clearly there may be many measurable functions on $\NN$ which lead to the same random variables when composed with $\eta$, but each two of them coincide outside a set, which is null with respect to the law of $\eta$. Moreover, $D_x F(\eta)$ is determined almost everywhere with respect to the product measure $\lambda\otimes \p$ on $\XX \times \Omega$. This convention should therefore not lead to misunderstanding. Note that $I^{(0)}(T_0 F) = T_0 F = \E F(\eta)$.

\subsection*{The time dependent process}

Let $\mathcal L$ be Lebesgue measure on $([0,\infty),\mathcal B([0,\infty)))$, where $B([0,\infty))$ is the Borel $\sigma$-field on $[0,\infty)$. On the product space $(\YY,\mathcal Y)$, where $\YY = \XX \times [0,\infty)$ and $\mathcal Y = \mathcal X \otimes \mathcal B([0,\infty))$, we define a Poisson point process $\eeta$ with intensity measure $\lambda \otimes \mathcal L$. We will denote a generic element of $\YY$ by $(x,t)$ and we will think of $t$ as \emph{time}. For any $t \in [0,\infty)$ we define $\eeta_t$ as the restriction of $\eeta$ to $\XX\times [0,t]$
and $\eta_t$ as the projection of $\eeta_t$ to $\XX$, i.e., $\eta_t(A) = \eeta(A \times [0,t])$. Thus $\eta_t$ may be interpreted as the collection of random points on $\XX$, which have \emph{arrived} up to time $t$.

\medskip

Note that the intensity measure of $\eta_t$, treated as a Poisson point process on the space $(\mathcal \XX,\mathcal X)$, is $t\lambda$. In particular, when discussing properties depending only on the distribution, we may identify the Poisson point process $\eta$ introduced above with $\eta_1$.

\medskip

Fix $d \in \N_+$ and let $g \in L_2^s(\XX^d)$. By $\pi:\YY^d \to \XX^d$ we denote the projection onto $\XX$ coordinates, i.e., $\pi( (x_1,t_1),(x_2,t_2),...,(x_d,t_d)) = (x_1,x_2,...,x_d)$ (we will use the same letter $\pi$ to denote all the projections for $d=1,2,\ldots$).

For $t \ge 0$ we will consider the stochastic integral
\begin{align}\label{eq:integral-equalities}
  I_t^{(d)}(g) = I^{(d)}(g\circ \pi, \eeta_t) = I^{(d)}(g,\eta_t)
\end{align}
(in Section \ref{sec:reductions} we comment on the second equality above).

For fixed $t$, $I_t^{(d)}(g)$ has the same distribution as the Wiener--It\^{o} integral of $g$ with respect to the Poisson process on $\XX$, with intensity $t\lambda$. The above construction allows to define all such integrals on a common probability space and leads to a stochastic process $(I^{(d)}_t(g))_{t\ge 0}$. It is easy to check that this process is a square integrable martingale. In what follows we will always consider its c\`adl\`ag version and for this version we will prove the Law of the Iterated Logarithm.

\subsection*{Poisson $U$-statistics and their Wiener--It\^o expansions}

Assume that $\eta$ is given by \eqref{eq:proper-process}. Following \cite{MR3161465}, for $g \in L_1^s(\XX^d)$ we define the Poisson $U$-statistics as
\begin{displaymath}
  U(g) = U(g,\eta) = \sum_{1\le i_1,\ldots,i_d\le \kappa}^{\neq} g(X_{i_1},\ldots,X_{i_d}).
\end{displaymath}

Note that one can equivalently write $U(g) = \int_{\XX^d} g d\eta^{(d)}$, where $\eta^{(d)}$ is the factorial measure of $\eta$, as defined in \eqref{eq:factorial-measures}. We will treat this equality as the definition of $U(g)$ in the general case. If $\eta$ is simple, i.e., it is almost surely a sum of Dirac deltas at distinct points and can be thus identified with a countable subset of $\XX$, we will sometimes write
\begin{displaymath}
  U(g) = \sum_{(x_1,\ldots,x_d) \in \eta^d_{\neq}} g(x_1,\ldots,x_d),
\end{displaymath}
with $\eta^d_{\neq}$ denoting the subspace of the Cartesian product $\eta^d$ consisting of points with pairwise distinct coordinates.
We remark that if $\XX$ is a Borel subset of a Polish space and $\lambda$ is a locally finite Borel measure, then $\eta$ is proper, moreover in this case $\eta$ is simple iff $\lambda$ is diffuse, i.e., $\lambda(\{x\}) =0$ for all $x \in \XX$ (see \cite[Proposition 6.9]{MR3791470}).

The definition in terms of factorial measures shows that $U(g)$ is measurable with respect to $\eta$ seen as a random measure.  Below we present basic facts concerning Poisson $U$-statistics and refer to \cite[Chapter 12.3]{MR3791470}, \cite[Chapter 12]{MR3585402}, \cite{MR3161465} for a detailed exposition.

In a dynamical context we will also consider a process $(U_t(g))_{t\ge 0}$, defined as
\begin{displaymath}
  U_t(g) = U(g,\eta_t) = U(g\circ \pi, \eeta_t).
\end{displaymath}

The  multivariate Mecke equation (see, e.g., \cite[Theorem 4.4]{MR3791470}) gives that
\begin{align}\label{eq:Mecke}
  \E U(g) = \int_{\XX^d} g(x_1,\ldots,x_d)d\lambda^{\otimes d}(x_1,\ldots,x_d),
\end{align}
whereas square integrability and the form of the variance for $U(g)$ is described in the following theorem, proved in \cite{MR3161465}.
\begin{theorem}\label{thm:Reitzner-Schulte}

(i) If $F=U(g) \in L_2(\Omega,\sigma(\eta),\p)$ then
\begin{displaymath}
  D_{x_1,\ldots,x_n}F = \frac{d!}{(d-n)!} \sum_{1\le i_1,\ldots,i_{d-n}\le \kappa}^{\neq} g(x_1,\ldots,x_n,X_{i_1},\ldots,X_{i_{d-n}})
\end{displaymath}
and
\begin{displaymath}
  T_n F(x_1,\ldots,x_n) = \ \frac{d!}{(d-n)!}\int_{\XX^{d-n}} g(x_1,\ldots,x_n,y_{n+1},\ldots,y_d)d\lambda^{\otimes (d-n)}(y_{n+1},\ldots,y_d).
\end{displaymath}

In particular, $\Var(F) = \sum_{n=1}^d n! \binom{d}{n}^2 V_n$, where

\begin{multline*}
 V_n = \int_{\XX^n}\Big(\int_{\XX^{d-n}} g(x_1,\ldots,x_n,y_{n+1},\ldots,y_d)d\lambda^{\otimes(d-n)}(y_{n+1},\ldots,y_d)\Big)^2d\lambda^{\otimes n}(x_1,\ldots,x_n).
\end{multline*}
\medskip

\noindent (ii) If $F=U(g) \in L_2(\Omega,\sigma(\eta),\p)$, then $F$ has a finite Wiener--It\^{o} expansion with kernels $T_n F \in L_1^s(\XX^n)\cap L_2^s(\XX^n)$, $n=0,\ldots,d$.

\medskip

\noindent (iii) If $F\in L_2(\Omega,\sigma(\eta),\p)$ has a finite Wiener--It\^{o} expansion with kernels $T_n F$, $n= 0,\ldots, d$, such that for all $n$, $T_n F \in L_1^s(\XX^n)\cap L_2^s(\XX^n)$, then $F$ is a finite sum of Poisson $U$-statistics.
\end{theorem}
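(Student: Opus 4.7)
The plan is to handle the three statements in order, with parts (i) and (ii) relying essentially on the combinatorics of factorial measures together with the Mecke formula and the Wiener--It\^o isometry, and (iii) being essentially a matter of inverting the relation between multiple integrals and integrals against factorial measures.

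For part (i), the first step is to compute a single discrete derivative $D_x U(g,\eta) = U(g,\eta+\delta_x) - U(g,\eta)$ by inspecting how the factorial measure changes when a new point $x$ is added. Writing $\eta+\delta_x = \sum_{n\le \kappa} \delta_{X_n} + \delta_x$ and examining \eqref{eq:factorial-proper}, one sees that the new ordered $d$-tuples with pairwise distinct entries are precisely those in which $x$ occupies at least one coordinate while the remaining coordinates are distinct points of $\eta$ different from $x$. By symmetry of $g$, the dominant contribution is $d$ copies of a sum over ordered $(d-1)$-tuples of distinct points of $\eta$ evaluated at $g(x,\cdot)$ (lower-order corrections with two or more coordinates equal to $x$ vanish identically since the coordinates must be pairwise distinct). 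Iterating $n$ times, collecting the symmetrically many choices of positions of $x_1,\ldots,x_n$, yields the claimed formula for $D_{x_1,\ldots,x_n}F$. Taking expectation and using the multivariate Mecke equation, which replaces the sum $\sum^{\neq}_{i_1,\ldots,i_{d-n}} f(X_{i_1},\ldots,X_{i_{d-n}})$ by $\int_{\XX^{d-n}} f \, d\lambda^{\otimes(d-n)}$, gives the formula for $T_n F$.

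The variance formula then follows by orthogonality of different chaoses together with the isometry $\E I^{(n)}(h)^2 = n!\,\|h^{sym}\|_2^2$ applied with $h = T_n F/n!$, plugging in the above expression for $T_n F$ and noting that it is already symmetric. For part (ii), the explicit form of $T_n F$ shows it vanishes identically for $n>d$, while $L_1$-integrability is immediate from Fubini and $g\in L_1^s(\XX^d)$, and $L_2$-integrability from the finiteness of $V_n$, which is guaranteed by $F\in L_2$ through the variance identity just established.

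For part (iii), I would invert the relationship \eqref{eq:L_1-integral} defining $I^{(d)}(g)$. That identity already expresses each multiple integral of a symmetric kernel as a linear combination, over subsets $J\subseteq[d]$, of integrals of the form $\int_{\XX^d} g \, d(\eta^{(|J|)}\otimes\lambda^{\otimes(d-|J|)})$. After Fubini-integrating the $\lambda$-part, each such term is itself an integral of a symmetric kernel (a partial $\lambda$-marginal of $g$, which lies in $L_1^s\cap L_2^s$ by the assumed integrability and Cauchy--Schwarz) against a factorial measure $\eta^{(k)}$, and therefore is by definition a Poisson $U$-statistic of order $k$. Summing across the finite chaos expansion of $F$ then writes $F$ as a finite sum of such $U$-statistics.

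The main technical obstacle is the combinatorial bookkeeping in part (i): one must carefully justify that the lower-order correction terms arising when iterating the add-one-cost operator cancel or collapse to the clean expression stated, which requires careful use of the requirement that all coordinates in the factorial measure are pairwise distinct. The rest is essentially manipulation of known identities (Mecke, Wiener--It\^o isometry, the definition \eqref{eq:L_1-integral}) once the expression for $D_{x_1,\ldots,x_n}F$ is in place.
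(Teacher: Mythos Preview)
The paper does not give its own proof of this theorem; it is stated as a result of Reitzner and Schulte and cited from \cite{MR3161465}. Your sketch outlines what is essentially the standard argument and is correct in spirit.

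One small point on part (ii): deducing $T_n F \in L_2$ from the variance identity risks circularity, since the chaos expansion and its variance formula already presuppose $T_n F \in L_2$. The clean resolution is to invoke the general Last--Penrose result (recalled in the paper just before the theorem) that \emph{any} $F \in L_2(\sigma(\eta))$ has $T_n F \in L_2^s(\XX^n)$ automatically; the substantive content of (ii) is then only that $T_n F \equiv 0$ for $n>d$ and that $T_n F \in L_1^s$, both of which you argue correctly from the explicit formula and Fubini.
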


The authors of \cite{MR3161465} discuss also many examples showing that in general one cannot drop the assumptions of the above theorem.

The formula \eqref{eq:Mecke} and the above theorem, when applied to $U_t(g)$, give that
\begin{align}\label{eq:U-stat-mean}
\E U_t(g) = t^d \int_{\XX^d} g(x_1,\ldots,x_d)d\lambda^{\otimes d}(x_1,\ldots x_d)
\end{align}
and
\begin{align}\label{eq:U-stat-chaos-expansion}
  U_t(g) - \E U_t(g)= \sum_{n=1}^d \binom{d}{n} t^{d-n} I_t^{(n)}(g_n),
\end{align}
where
\begin{align}\label{eq:U-stat-chaos-kernels}
g_n(x_1,\ldots,x_n) = \int_{\XX^{d-n}} g(x_1,\ldots,x_n,y_{n+1},\ldots,y_d)d\lambda^{\otimes (d-n)}(y_{n+1},\ldots, y_d).
\end{align}
Moreover,
\begin{align}\label{eq:U-stati-variance}
  \Var(U_t(g)) = \sum_{n=1}^d n! \binom{d}{n}^2 t^{2d-n} \|g_n\|_2^2
\end{align}
and thus, for fixed $g$, the leading term in the asymptotic behaviour of the variance as $t\to \infty$ comes from the first summand for which $g_n$ does not vanish. For positive functions $g$, appearing in stochastic geometry, it is the first term. For local $U$-statistics, in which the kernel $g$ may change with $t$ and is non-vanishing only if all the arguments are at a distance at most $r_t$ for some $r_t\to 0$, it may happen that one of the other terms on the right-hand side of \eqref{eq:U-stati-variance} dominates even if the kernels are positive.

\subsection{Moment and exponential inequalities}\label{sec:inequalities}

In the whole article $C_a, c_a, C_{a,b}, c_{a,b}, \widetilde{C}_a$, etc., will denote positive constants depending only on the parameters in the subscripts. Their values may vary between occurrences, however at some places in the proofs, for transparency, we will keep track of how the constants change between lines.

In order to express our tail and moment estimates, we need to introduce a certain family of norms of functions $g\in L_2^s(\XX^d)$, indexed by partitions of the subsets of $[d] = \{1,\ldots,d\}$. For $I \subseteq [d]$ let $\PP_I$ denote the family of all partitions of $I$ into nonempty, pairwise disjoint sets (with the convention that for $I=\emptyset$, $\PP_I$ consists of a single element -- the empty partition). For $x = (x_1,\ldots,x_d)\in \XX^d$ and $I\subseteq [d]$ we will denote $x_I = (x_i)_{i\in I}$. We will first define the norms $\|\cdot\|_\mathcal{J}$ for $\mathcal{J} \in \PP_{[d]}$. Next, we will comment on their conditional versions for $\mathcal{J} \in \PP_I$ with $I \subsetneq [d]$, which will be functions of $x_{I^c}$. Recall that for $g \colon \XX^d \to \R$, by $\|g\|_p$ we denote the $L_p(\XX^d) = L_p(\XX^d,\lambda^{\otimes d})$ norm of $g$.

\begin{defi}
For $g \in L_2(\XX^d)$ and $\mathcal{J} = \{J_1,\ldots,J_k\} \in \PP_{[d]}$ define
\begin{displaymath}
  \|g\|_\mathcal{J} = \sup\Big\{\int_{\XX^d} g(x) \prod_{j=1}^k \varphi_j(x_{J_j})d\lambda^{\otimes d}(x_1,\ldots,x_d)\colon\; \varphi_j\colon \XX^{J_j}\to \R, \|\varphi_j\|_2 \le 1 \Big\}.
\end{displaymath}
\end{defi}

Observe that by the Cauchy--Schwarz inequality $\|g\|_{\{[d]\}} = \|g\|_2 \ge \|g\|_\mathcal{J}$ for all $\mathcal{J} \in \PP_{[d]}$. In general, $\|g\|_{\mathcal{J}}$ is a norm of a multilinear functional defined by $g$ on the product $\prod_{j=1}^k L_2(\XX^{J_j})$. In what follows, to simplify the notation, we will often suppress the outer brackets and commas in the notation for $\mathcal{J}$ and write, e.g., $\|\cdot\|_{\{1,2,3\}}$, $\|\cdot\|_{\{1,2\}\{3\}}$ instead of $\|\cdot\|_{\{\{1,2,3\}\}}$, $\|\cdot\|_{\{\{1,2\},\{3\}\}}$. We will stick to this convention throughout the whole article.

For instance, for $g \colon \XX^2 \to \R$, we have
\begin{align*}
\|g\|_{\{1,2\}} & = \|g\|_2 = \Big(\int_{\XX^2} g(x,y)^2\lambda(dx)\lambda(dy)\Big)^{1/2}, \\
\|g\|_{\{1\},\{2\}} & = \sup\Big\{ \int_{\XX^2} g(x,y) \varphi_1(x)\varphi_2(y) \lambda(dx)\lambda(dy) \colon \|\varphi_i\|_2 \le 1, \; i=1,2\Big\} \\
&= \sup\Big\{\Big(\int_\XX \Big(\int_\XX g(x,y)\varphi(y)\lambda(dy)\Big)^2\lambda(dx)\Big)^{1/2}\colon \|\varphi\|_2 \le 1\Big\}
\end{align*}
and for $g \colon \XX^3 \to \R$,
\begin{align*}
  \|g\|_{\{1,2,3\}} & = \|g\|_2 = \Big(\int_{\XX^3} g(x,y,z)^2\lambda(dx)\lambda(dy)\lambda(dz)\Big)^{1/2},\\
  \|g\|_{\{1,2\}\{3\}} & =   \sup\Big\{ \int_{\XX^3} g(x,y,z) \varphi_1(x,y)\varphi_2(z) \lambda(dx)\lambda(dy)\lambda(dz) \colon \|\varphi_i\|_2 \le 1, \; i=1,2\Big\} \\
&= \sup\Big\{\Big(\int_{\XX^2} \Big(\int_\XX g(x,y,z)\varphi(z)\lambda(dz)\Big)^2\lambda(dx)\lambda(dy)\Big)^{1/2}\colon \|\varphi\|_2 \le 1\Big\}\\
& = \sup\Big\{\Big(\int_{\XX} \Big(\int_{\XX^2} g(x,y,z)\varphi(x,y)\lambda(dx)\lambda(dy)\Big)^2\lambda(dz)\Big)^{1/2}\colon \|\varphi\|_2 \le 1\Big\},\\
\|g\|_{\{1\}\{2\}\{3\}} & = \sup\Big\{ \int_{\XX^3} g(x,y,z) \varphi_1(x)\varphi_2(y)\varphi_3(z) \lambda(dx)\lambda(dy) \lambda(dz) \colon \|\varphi_i\|_2 \le 1, \; i\in [3]\Big\}.
\end{align*}

If $g \in L_2(\XX^d)$ and $I \subsetneq [d]$ then by the Fubini theorem, the function $\XX^{I} \ni x_{I} \mapsto  g(x_1,\ldots,x_d)$ is an element of $L_2(\XX^{I})$ for all $x_{I^c} \in \XX^{I^c}$ outside a set of $\lambda^{\otimes(d-|I|)}$ measure zero. For $\mathcal{J} \in \PP_I$ by $\|g\|_{\mathcal{J}}$ we will denote the function $\XX^{d-|I|} \ni x_{I^c} \to \|g(x_{I^c},\cdot)\|_{\mathcal{J}}$. What will be important in the exponential inequalities is the $L_\infty$ norm of this function, $\|\|g\|_\mathcal{J}\|_\infty$. To illustrate this notation, let us consider two examples. First, for $g \colon \XX^2 \to \R$, $I = \{2\}$ and $\mathcal{J} = \{\{2\}\} \in \PP_{\{2\}}$ we have
\begin{displaymath}
  \Big\|\|g\|_{\{2\}}\Big\|_\infty = \esssup_{x \in \XX} \; \sup\Big\{\int_\XX g(x,y)\varphi(y)\lambda(dy) \colon \|\varphi\|_2 \le 1 \Big\} = \esssup_{x \in \XX} \|g(x,\cdot)\|_2.
\end{displaymath}
Now, for $g \colon \XX^3 \to \R$, $I = \{1,2\}$ and $\mathcal{J} = \{\{1\},\{2\}\}$,
\begin{align*}
  \Big\|\|g\|_{\{1\}\{2\}}\Big\|_\infty & = \esssup_{z\in \XX} \; \sup\Big\{\int_{\XX^2} g(x,y,z)\varphi_1(x)\varphi_2(y) \lambda(dx)\lambda(dy)\colon \|\varphi_i\|_2 \le 1,\; i=1,2\Big\} \\
  & = \esssup_{z\in \XX} \|g(\cdot,\cdot,z)\|_{\{1\}\{2\}}.
\end{align*}
Note that thanks to the assumption that $\mathcal{X}$ is countably generated, the suprema above can be taken over countable dense subsets in the unit balls of the corresponding $L_2$ spaces, so the functions $\|g\|_\mathcal{J}$ are measurable.

In the special case, when $\mathcal{J} = \emptyset$ is the unique partition of $I = \emptyset$, we have
\begin{displaymath}
  \|g\|_\emptyset = g,\textrm{ and thus } \|\|g\|_\emptyset\|_\infty = \|g\|_\infty.
\end{displaymath}

To shorten the notation, in what follows we will sometimes write $\|\|g\|_{\mathcal{J}}\|_\infty = \|g\|_\mathcal{J}$ also when $\mathcal{J} \in \PP_{[d]}$. This will allow us to write all the terms in our estimates under a single sum.

Having introduced the notation, we are ready to state the first of our main results.
\begin{theorem}\label{thm:tails-and-moments-main}
Let $g \in L_2^s(\XX^d)$.

\noindent (i) There exists a constant $C_d$, depending only on $d$, such that for all $T > 0$ and $p \ge 2$,
\begin{align}\label{eq:moment-main}
\Big(\E \sup_{t\le T} |I_t^{(d)}(g)|^p\Big)^{1/p} \le C_d \sum_{k=0}^d \sum_{\mathcal{J} \in \PP_{\{k+1,\ldots,d\}}} p^{k + |\mathcal{J}|/2} T^{(d-k)/2} \Big\| \|g\|_{\mathcal{J}}\Big\|_\infty.
\end{align}
\noindent (ii) There exists a constant $c_d$, depending only on $d$, such that for all $T > 0$ and $u > 0$,
\begin{align}\label{eq:tail-main}
\p\Big(\sup_{t\le T} |I_t^{(d)}(g)| \ge u\Big) \le 2\exp\Big(-c_d\min_{0\le k \le d} \min_{\mathcal{J} \in \PP_{\{k+1,\ldots,d\}}} \Big(\frac{u}{T^{(d-k)/2}\|\|g\|_{\mathcal{J}}\|_\infty}\Big)^{\frac{2}{2k+|\mathcal{J}|}}\Big).
\end{align}
\end{theorem}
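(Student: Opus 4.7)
The plan is to reduce to a moment estimate at fixed time $T$ via Doob's maximal inequality, then bound the moment by an iterated Bernstein-type argument after decoupling. Since $(I_t^{(d)}(g))_{t \ge 0}$ is a square integrable c\`adl\`ag martingale, Doob's $L_p$-inequality gives $\E \sup_{t \le T} |I_t^{(d)}(g)|^p \le (p/(p-1))^p \E |I_T^{(d)}(g)|^p$ for $p > 1$, so it suffices to control $\|I_T^{(d)}(g)\|_p$. By scaling (or equivalently by the fact that $I_T^{(d)}(g)$ has the law of the multiple integral of $g$ against a Poisson process with intensity $T\lambda$), the $T$-dependence on the right-hand side will emerge automatically once the estimate is established at $T = 1$. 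Part (ii) follows from (i) by a standard Chebyshev optimization over $p$: for a family of bounds of the form $\|X\|_p \le \sum_\alpha a_\alpha p^{\beta_\alpha}$ one deduces $\p(|X| \ge u) \le 2\exp(-c \min_\alpha (u/a_\alpha)^{1/\beta_\alpha})$, which yields precisely \eqref{eq:tail-main} from \eqref{eq:moment-main}.

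For the fixed-time moment bound, I would first apply a decoupling inequality (of the type collected in the Appendix) to replace $I^{(d)}(g)$ by its decoupled analogue $I_{\mathrm{dec}}^{(d)}(g)$ built from $d$ independent copies $\eta^{(1)},\ldots,\eta^{(d)}$ of the Poisson process. The advantage is that, after conditioning on all but one coordinate process, the remaining randomness reduces to a single stochastic integral $\int f\,d(\eta^{(j)} - \lambda)$ of a function $f$ that depends on the other variables. The core analytic input is then the Bernstein-type moment inequality
\begin{align*}
\Bigl\| \int f\, d(\eta - \lambda) \Bigr\|_p \le C\bigl(\sqrt{p}\,\|f\|_2 + p\,\|f\|_\infty\bigr),
\end{align*}
valid for any $f \in L_2 \cap L_\infty$. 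Iterating this inequality across the $d$ coordinate processes, at each step one chooses either to estimate the conditional $p$-th moment by the variance branch (contributing a factor $\sqrt{p}$ and an $L_2$-integration of the kernel in one variable) or by the sup branch (contributing a factor $p$ and an $L_\infty$-bound in one variable). The variables handled by the sup branch correspond exactly to the indices $\{1,\ldots,k\}$ in \eqref{eq:moment-main}, while the remaining $d-k$ variables are integrated in $L_2$.

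The bookkeeping of how the $L_2$-integrations combine across the processes is where the partition norms $\|g\|_{\mathcal{J}}$ appear. To make this rigorous I would, at each application of Bernstein's inequality, introduce auxiliary functions $\varphi_j$ on the conditioning variables coming from a duality argument: $\|f\|_2 = \sup_{\|\varphi\|_2 \le 1} \int f\varphi\, d\lambda$. Running this duality consistently across the $d$ integration steps produces, for the Gaussian branches, a product $\prod_j \varphi_j(x_{J_j})$ against $g$, where the sets $J_j$ record which coordinates were grouped together by the conditioning structure. Taking sup over the $\varphi_j$ with $\|\varphi_j\|_2 \le 1$ yields exactly $\|g\|_{\mathcal{J}}$, and an intermediate essential supremum over the sup-branch variables produces $\|\|g\|_{\mathcal{J}}\|_\infty$. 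Summing over all possible choices of branches and groupings gives the sum over $k$ and $\mathcal{J} \in \PP_{\{k+1,\ldots,d\}}$.

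The main obstacle will be the precise combinatorial accounting in the iterated Bernstein step: one must verify that every partition $\mathcal{J}$ appears with the correct power of $p$ and the correct $T$-scaling, and that no spurious cross-terms arise from the fact that at each step the choice of branch depends on the conditioning. I would handle this by induction on $d$: the inductive hypothesis gives a conditional bound for the $(d-1)$-fold decoupled integral with all constants explicit in the partition structure, and a single application of Bernstein to the outer integration, followed by Minkowski's inequality in $L_p(\Omega)$ and the duality step above, produces the bound for $d$. A final, routine symmetrization/normalization argument (using \eqref{eq:integral-non-symmetric-symmetric}) and the equivalence between decoupled and non-decoupled moments close the proof.
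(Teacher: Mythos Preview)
Your reduction to a fixed-time moment estimate via Doob's maximal inequality is fine, and in fact simpler than what the paper does (the paper proves a tail-to-tail maximal inequality, Lemma~4.3, by discretizing in time, decoupling, and applying a Montgomery--Smith type bound). So that part is not the issue.

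The gap is in your iterated Bernstein step. Iterating the one-variable bound $\|\int f\,d(\eta-\lambda)\|_p \le C(\sqrt{p}\,\|f\|_2 + p\,\|f\|_\infty)$ across $d$ decoupled coordinates does \emph{not} produce the correct powers of $p$ on the partition norms. Already for $d=2$: choosing the variance branch twice gives $\sqrt{p}\cdot\sqrt{p}\cdot\|g\|_2 = p\,\|g\|_2$, whereas \eqref{eq:moment-main} requires $p^{1/2}\|g\|_2$ (for $k=0$, $\mathcal{J}=\{[2]\}$) and $p\,\|g\|_{\{1\}\{2\}}$ (the operator norm, not the Hilbert--Schmidt norm). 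The difference is essential: with $p\,\|g\|_2$ you would only get $\exp(-c\,u/\|g\|_2)$ for small $u$, losing the subgaussian tail entirely. Your duality trick does not repair this: writing $\|F\|_2 = \sup_{\|\varphi\|_2\le 1}\int F\varphi$ places the supremum \emph{inside} the $L_p$-norm over the remaining Poisson randomness, and $\|\sup_\varphi X_\varphi\|_p \ge \sup_\varphi \|X_\varphi\|_p$ goes the wrong way. Getting the supremum outside is exactly the content of Lata{\l}a's two-sided estimate for Gaussian chaoses, which is highly nontrivial.

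The paper does not attempt this iteration. Instead it (i) approximates $g$ by step functions in $\mathcal{E}_d^s$, (ii) writes $I_T^{(d)}$ as a classical $U$-statistic in the independent centred Poisson variables $X_i=\eeta(A_i\times[0,T])-T\lambda(A_i)$, (iii) decouples, and then (iv) invokes as a black box Theorem~4.3, the moment inequality for generalized decoupled $U$-statistics from \cite{MR2294982}, which already contains the partition norms with the right exponents (and itself rests on Lata{\l}a's Gaussian estimate). A further subtlety you do not mention: Theorem~4.3 produces a residual factor $\E_{I^c}\max_{\ii_{I^c}}\prod_{j\in I^c}|X_{i_j}^{(j)}|^p$, and the paper kills this by subdividing the sets $A_i$ into pieces of measure $\le \Lambda/N^2$ so that $\max|X_i^{(N,j)}|\le 2$ eventually, via Borel--Cantelli. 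Without a device of this kind, the sup-branch terms would not reduce to $\|\|g\|_\mathcal{J}\|_\infty$.
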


\begin{remark}
Note that the inequalities of the above proposition trivialize if for some $\mathcal{J}$, $\| \|g\|_{\mathcal{J}}\|_\infty = \infty$. To simplify the formulation of this and forthcoming results we, therefore, do not state explicitly assumptions on finiteness of these norms. The inequalities are meaningful only if all of them are finite.
\end{remark}

\begin{remark} \label{re:discussion-of-norms}
The partition norms used in the above theorem may at first look overly complicated, especially that in many situations it may be difficult to calculate them explicitly. We would like to stress that for $\mathcal{J} \in \PP_{[d]}$ their necessity in tail and moment estimates for multiple stochastic integrals comes from the fact that they are present in Latała's two-sided estimates for Gaussian multiple stochastic integrals, which appear as distributional limits of their Poisson counterparts. Let us now briefly describe this result to compare with our estimates. The original formulation in \cite{MR2294983} is given in terms of multilinear forms in independent standard Gaussian variables. It is straightforward to extend it to Gaussian stochastic integrals. We refer to \cite{MR1474726,MR2200233} for their definitions and general theory.

Let $G$ be a Gaussian random measure on $(\XX,\mathcal{X})$ with covariance $\Cov(G(A),G(B)) = \lambda(A\cap B)$ and for $g \in L_2^s(\XX^d)$ let $\mathfrak{I}^{(d)}(g)$ be the $d$-fold stochastic integral of $g$ with respect to $G$. Then, the results by Latała assert that for some constant $C_d$, depending only on $d$,
for all $p \ge 2$,
\begin{displaymath}
  \frac{1}{C_d} \sum_{\mathcal{J}\in\PP_{[d]}} p^{|\mathcal{J}|/2} \|g\|_\mathcal{J} \le \|\mathfrak{I}^{(d)}(g)\|_p \le C_d \sum_{\mathcal{J}\in\PP_{[d]}} p^{|\mathcal{J}|/2} \|g\|_\mathcal{J}
\end{displaymath}
and for $u > 0$,
\begin{multline*}
  \frac{1}{C_d} \exp\Big(-C_d\min_{\mathcal{J} \in \PP_{[d]}} \Big(\frac{u}{\|g\|_{\mathcal{J}}}\Big)^{2/|\mathcal{J}|}\Big) \le \p(|\mathfrak{I}^{(d)}(g)| \ge u) \\
  \le C_d \exp\Big(-\frac{1}{C_d}\min_{\mathcal{J} \in \PP_{[d]}} \Big(\frac{u}{\|g\|_{\mathcal{J}}}\Big)^{2/|\mathcal{J}|}\Big).
\end{multline*}
In other words, the part of our estimates \eqref{eq:moment-main} and \eqref{eq:tail-main} corresponding to $k = 0$ governs (up to constants) the moment and tail behaviour of $\mathfrak{I}^{(d)}$.

Since, as proved in \cite{MR3096352}, $\frac{1}{t^{d/2}} I_t^{(d)}(g)$ converges in distribution to $\mathfrak{I}^{(d)}(g)$, we can see that after some additional truncation steps, allowing to approximate a general function in $L_2^s(\XX^d)$ by functions $g$ for which $\|\|g\|_\mathcal{J}\|_\infty < \infty$ for all $\mathcal{J}$, we can recover from Theorem \ref{thm:tails-and-moments-main} the upper bounds from the estimates by Latała. Moreover, we can see that at least for $k=0$, we cannot in general replace the norms appearing in Theorem \ref{thm:tails-and-moments-main} by any substantially smaller quantities.\footnote{We would like to stress that our results do not provide a new proof of the inequalities by Latała, since in our argument (presented in Section \ref{sec:inequalities-proofs}) we rely on inequalities for $U$-statistics from \cite{MR2294982}, which in turn use in a crucial way the main technical estimate from \cite{MR2294983}.}

As for the conditional norms $\| \|g\|_\mathcal{J}\|_\infty$ for $\mathcal{J} \in \PP_{\{k+1,\ldots,d\}}$ and $k > 0$, their counterparts are present in inequalities for $U$-statistics in independent random variables presented in \cite{MR1857312,MR2073426,MR2294982} (generalizing the classical Bernstein inequality for sums of independent random variables, which also contains the $L_\infty$ norm of summands) and while we do not know if their presence in the estimates is necessary, by comparison of Gaussian and Poisson tails as well as by two-sided tail and moment estimates for polynomials in independent exponential variables \cite{MR3052405,MR3412778}, whose tails are intermediate between Gaussian and Poissonian, one can suspect that certain norms of this form should appear in the estimates. We stress that the question of providing two-sided estimates for moments of Poisson multiple stochastic integrals, analogous to Latała's results in the Gaussian case, remains open.
\end{remark}

\begin{remark}
In certain situations, depending on available information on the function $g$, it may be practical (and sometimes sufficient, as we will see, e.g., in the proof of the Law of the Iterated Logarithm) to replace the norms $\|g\|_\mathcal{J}$ and $\| \|g\|_\mathcal{J}\|_\infty$ by $\|g\|_2$ and $\esssup_{x_{I^c}} \|g(x_{I^c},\cdot)\|_2$). This way one loses the correspondence with two-sided inequalities for Gaussian integrals as well as the subgaussian nature of the inequality for small values of $u$, but one is able to obtain an estimate in terms of more tractable quantities. We will use this simplified form in the proof of the Law of Iterated Logarithm, so we state it here explicitly.
\end{remark}

\begin{cor}\label{cor:simplified-estimate}
Let $g \in L_2^s(\XX^d)$ and for $k = 0,\ldots,d$, let
\begin{displaymath}
B_k^2 = \| \|g\|_{\{k+1,...,d\}} \|_{\infty}^2 = \esssup_{x_1,\ldots,x_k} \int_{\XX^{d-k}} g(x_1,\ldots,x_d)^2d\lambda^{\otimes(d-k)}(x_{k+1},\ldots, x_d).
\end{displaymath}
Then, there exists a constant $C_d$, depending only on $d$, such that for all $T > 0$, $p \ge 2$,
\begin{displaymath}
  \Big(\E \sup_{t\le T} |I^{(d)}_t(g)|^p\Big)^{1/p} \le C_d \sum_{k=0}^d p^{(d+k)/2} T^{(d-k)/{2}} B_k.
\end{displaymath}
Moreover, for some constant $c_d>0$, depending only on $d$, and for all $T>0$, $u > 0$,
\begin{displaymath}
  \p(\sup_{t\le T} |I^{(d)}_t(g)| \ge u) \le 2\exp\Big(-c_d\min_{0\le k \le d} \Big(\frac{u}{T^{(d-k)/2} B_k}\Big)^{\frac{2}{d+k}}\Big).
\end{displaymath}
\end{cor}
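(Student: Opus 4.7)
The plan is to deduce both estimates from Theorem \ref{thm:tails-and-moments-main} by dominating every partition norm $\|\|g\|_\mathcal{J}\|_\infty$ (with $\mathcal{J} \in \PP_{\{k+1,\ldots,d\}}$) by the single quantity $B_k$, and then routing the tail bound through the moment bound via a Chebyshev argument. The domination follows from Cauchy--Schwarz: for fixed $k \in \{0,\ldots,d\}$, a partition $\mathcal{J} = \{J_1,\ldots,J_r\}$ of $\{k+1,\ldots,d\}$, and $\varphi_j \colon \XX^{J_j} \to \R$ with $\|\varphi_j\|_2 \le 1$, Cauchy--Schwarz in $L_2(\XX^{d-k},\lambda^{\otimes(d-k)})$ combined with Fubini (which factors the $L_2$-norm of $\prod_j \varphi_j$ along the disjoint blocks) yields
$$
\int_{\XX^{d-k}} g(x_1,\ldots,x_d) \prod_{j=1}^r \varphi_j(x_{J_j}) \, d\lambda^{\otimes(d-k)} \le \|g(x_1,\ldots,x_k,\cdot)\|_{L_2(\XX^{d-k})}.
$$
Taking the supremum over admissible $\varphi_j$'s and the essential supremum over $x_1,\ldots,x_k$ gives $\|\|g\|_\mathcal{J}\|_\infty \le B_k$ (for $k=d$ this reduces to the convention $\|\|g\|_\emptyset\|_\infty = \|g\|_\infty = B_d$).

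For the moment bound, I would plug this into \eqref{eq:moment-main} and observe that $k + |\mathcal{J}|/2 \le (d+k)/2$ since $|\mathcal{J}| \le d-k$, so $p^{k+|\mathcal{J}|/2} \le p^{(d+k)/2}$ as $p \ge 2 \ge 1$. The cardinality of $\PP_{\{k+1,\ldots,d\}}$ is a Bell number depending only on $d$, so the sum over partitions is absorbed into the constant $C_d$ and the first assertion follows.

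For the tail estimate, a direct substitution into \eqref{eq:tail-main} is not available: the minimum there runs over a larger index set than in the Corollary, and since the exponent $2/(2k+|\mathcal{J}|)$ exceeds $2/(d+k)$, a term-by-term comparison fails in the regime $u < A_k$. I would instead invoke the moment bound just derived together with a standard Chebyshev-optimization argument. With $X = \sup_{t \le T}|I_t^{(d)}(g)|$, $A_k = T^{(d-k)/2} B_k$, and $M := \min_{0 \le k \le d}(u/A_k)^{2/(d+k)}$, I would choose $p = c_d' M$ with $c_d' > 0$ small enough that $C_d p^{(d+k)/2} A_k \le u/(e(d+1))$ for every $k$. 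The moment bound then gives $\|X\|_p \le u/e$, and Markov's inequality yields $\p(X \ge u) \le e^{-p} = e^{-c_d' M}$ as long as $p \ge 2$. When $p < 2$ (so that $M$ is small), the probability is trivially at most $1$, and a further shrinking of $c_d$ ensures $2 e^{-c_d M} \ge 1$ uniformly in that range; the prefactor $2$ in the statement is absorbed here. I expect the main obstacle to be precisely this last step, namely recognizing that one must route through the moment bound rather than manipulate the tail bound of Theorem \ref{thm:tails-and-moments-main} directly, and then keeping the constant $c_d$ uniform across both regimes of $u$.
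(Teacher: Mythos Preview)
Your proof is correct and follows the same key observation as the paper: $\|\|g\|_{\mathcal{J}}\|_\infty \le B_k$ for every $\mathcal{J} \in \PP_{\{k+1,\ldots,d\}}$, then absorb Bell numbers into $C_d$. One small remark: your claim that direct substitution into \eqref{eq:tail-main} is ``not available'' is slightly too strong. With $M' := \min_{0\le k\le d}(u/A_k)^{2/(d+k)}$, if $M' \le 1$ the desired bound is trivially $\ge 1$ once $c_d \le \log 2$; if $M' > 1$ then $u/A_k > 1$ for every $k$, and since $2/(2k+|\mathcal{J}|) \ge 2/(d+k)$ the term-by-term comparison does go through. This is presumably what the paper means by ``adjusting the constants.'' Your route through the moment bound and Chebyshev is of course equally valid---indeed it is exactly how \eqref{eq:tail-main} itself is derived in the proof of Theorem~\ref{thm:tails-and-moments-main}---so the two arguments are really the same in content.
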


We will now apply Theorem \ref{thm:tails-and-moments-main} to Poisson $U$-statistics. Using \eqref{eq:U-stat-chaos-expansion} and the triangle inequality one easily gets the following moment and tail estimates. In Section \ref{sec:examples} we will apply it to specific examples of Poisson $U$-statistics studied in the literature.

\begin{cor}\label{cor:U-stat-tail} Assume that $g\in L_1^s(\XX^d)\cap L_2^s(\XX^d)$ and let $g_n$ be the functions given by \eqref{eq:U-stat-chaos-kernels}.

\noindent (i) There exists a constant $C_d$, depending only on $d$, such that for all $T> 0$ and $p \ge 2$,
\begin{displaymath}
  \Big(\E \sup_{t\le T} | U_t(g) - \E U_t(g)|^p\Big)^{1/p} \le C_d\sum_{n=1}^d \sum_{k=0}^n\sum_{\mathcal{J}\in \PP_{\{k+1,\ldots,n\}}} T^{d-(n+k)/2} p^{k+|\mathcal{J}|/2}\Big\|\|g_n\|_{\mathcal{J}}\Big\|_\infty.
\end{displaymath}
\noindent (ii) There exists a constant $c_d > 0$, depending only on $d$, such that for all $T>0$ and $u>0$,
\begin{multline*}
  \p\Big(\sup_{t\le T} |U_t(g) - \E U_t(g)| \ge u\Big) \\  \le 2\exp\Big(-c_d \min_{1\le n\le d}\min_{0\le k \le n}\min_{\mathcal{J}\in \PP_{\{k+1,\ldots,n\}}} \Big(\frac{u}{T^{d-(n+k)/2}\|\|g_n\|_{\mathcal{J}}\|_\infty}\Big)^{2/(2k+|\mathcal{J}|)}\Big).
\end{multline*}
\end{cor}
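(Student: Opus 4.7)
The plan is to apply Theorem \ref{thm:tails-and-moments-main} term by term to the chaos expansion \eqref{eq:U-stat-chaos-expansion}
\[
U_t(g) - \E U_t(g) = \sum_{n=1}^d \binom{d}{n} t^{d-n} I_t^{(n)}(g_n),
\]
and then combine the resulting estimates via a triangle inequality for the moment bound and a standard moment-to-tail conversion for the tail bound. The key preliminary observation is that, for $0 \le t \le T$ and $n \le d$, the map $t \mapsto t^{d-n}$ is nondecreasing, so
\[
\sup_{t \le T} \bigl|U_t(g) - \E U_t(g)\bigr| \le \sum_{n=1}^d \binom{d}{n} T^{d-n} \sup_{t \le T} |I_t^{(n)}(g_n)|.
\]

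For part (i), I would take $L_p(\Omega)$-norms of both sides, use the triangle inequality, and apply Theorem \ref{thm:tails-and-moments-main}(i) to each $\|\sup_{t \le T}|I_t^{(n)}(g_n)|\|_p$. The $n$-th summand contributes a bound with generic term $T^{d-n}\cdot p^{k+|\mathcal{J}|/2}T^{(n-k)/2}\|\|g_n\|_\mathcal{J}\|_\infty$, and the powers of $T$ combine as $T^{d-n+(n-k)/2} = T^{d-(n+k)/2}$, which is precisely the exponent claimed. Absorbing the binomial factors $\binom{d}{n}$ and the constants $C_n$ from Theorem \ref{thm:tails-and-moments-main} into a single $C_d$ yields the stated moment bound.

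For part (ii), the most transparent route is to derive the tail estimate from the moment estimate via the elementary implication: if $(\E |X|^p)^{1/p} \le \sum_j A_j p^{\alpha_j}$ for every $p \ge 2$, then
\[
\p(|X| \ge u) \le 2\exp\Bigl(-c\min_j (u/A_j)^{1/\alpha_j}\Bigr)
\]
for some universal $c>0$; this follows from Markov's inequality at the level $p \asymp \min_j(u/A_j)^{1/\alpha_j}$. Applied with the index set consisting of triples $(n,k,\mathcal{J})$ and with $A_{(n,k,\mathcal{J})} = T^{d-(n+k)/2}\|\|g_n\|_\mathcal{J}\|_\infty$, $\alpha_{(n,k,\mathcal{J})} = k+|\mathcal{J}|/2$, this recovers exactly the stated tail bound, the exponent $2/(2k+|\mathcal{J}|)$ arising as $1/\alpha_{(n,k,\mathcal{J})}$. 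Alternatively, one could invoke Theorem \ref{thm:tails-and-moments-main}(ii) directly on each $\sup_{t\le T}|I_t^{(n)}(g_n)|$ at the level $u/(d\binom{d}{n}T^{d-n})$ and take a union bound over $n$; this produces the same estimate up to the value of the constant $c_d$.

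There is no genuine obstacle: once Theorem \ref{thm:tails-and-moments-main} is established, the corollary is essentially bookkeeping. The only point requiring mild care is the verification that the exponents of $T$ combine correctly, which is immediate from $d-n+(n-k)/2 = d-(n+k)/2$.
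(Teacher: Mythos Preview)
Your proposal is correct and follows essentially the same approach as the paper, which dispatches the corollary in one sentence: ``The corollary follows from the chaos expansion \eqref{eq:U-stat-chaos-expansion} combined with the triangle inequality, the union bound, Theorem \ref{thm:tails-and-moments-main} applied to each term in the chaos expansion, and adjustment of constants.'' You have made explicit the step $t^{d-n} \le T^{d-n}$ and the arithmetic $d-n+(n-k)/2 = d-(n+k)/2$, and offered the moment-to-tail route as an alternative to the union bound; all of this is exactly the bookkeeping the paper alludes to.
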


Similarly as in the result for $I_t^{(d)}$, the exponents in the tail estimate vary from $2$ (for $k=0$, $|\mathcal{J}| = 1$) to $1/d$ (for $k = n=d$, $|\mathcal{J}| = 0$, i.e., for the unique partition of the empty set). Note that the part corresponding to exponent 2 can be equivalently (up to the value of the constant $c_d$) written as
\begin{displaymath}
  \frac{u^2}{\sum_{n=1}^d T^{2d-n}\|g_n\|_2^2}.
\end{displaymath}
By \eqref{eq:U-stati-variance}, the denominator is (again up to a constant depending only on $d$) equivalent to the variance of $U_T(g)$. Clearly, we can apply Corollary \ref{cor:U-stat-tail} to estimate just the tail of $U_T(g)$, both in the case of fixed $g$, independent of $T$, and for $g$ changing with $T$ (as in the case of local $U$-statistics). It turns out that in many situations we are then able to capture the CLT behaviour in a growing window of parameters $u$ as $T\to \infty$. We will discuss this in more detail when presenting examples in Section \ref{sec:examples}.

\begin{remark} To conclude this section, let us briefly compare the assumptions of our inequalities with those from other articles. Our estimates are written in terms of the quantities $\|\|g_n\|_\mathcal{J}\|_\infty$, which in general may be difficult to estimate, but in certain situations can be handled. The recent papers \cite{schulte2023moderate,bonnet2024concentration} impose assumptions on the growth of integrals of \emph{contracted} products of the type $(|g_{n_1}| \otimes\cdots \otimes |g_{n_m}|)_{\sigma}$, or $(|g^{\otimes m}|)_\sigma$, where $\sigma$ is a partition indicating how certain arguments of the tensor product are glued together (we refer to \cite{schulte2023moderate,bonnet2024concentration} for details). Despite the appearance of partitions in their estimates and in ours, their role is quite different. The truncated products appear in \cite{schulte2023moderate,bonnet2024concentration} as elements of combinatorial expansions of moments or cumulants of $U$-statistics, and assumptions on their growth are reminiscent of the basic condition appearing in Bernstein inequality for sums of independent random variables. While our tail estimates in the end also rely on moment inequalities, they are related to more analytic methods going back to the proof of moment estimates for Gaussian chaoses due to Latała \cite{MR2294983} and subsequent moment inequalities for $U$-statistics from \cite{MR2294982}.

In particular examples of local nonnegative kernels appearing in stochastic geometry, the assumptions from \cite{schulte2023moderate,bonnet2024concentration} can be reduced to certain pointwise estimates on the kernel in the spirit of those introduced in \cite{MR3849811} (where they were used in combination with a Poisson version of convex distance inequality due to Reitzner \cite{MR3151752}). The advantage of this approach is the possibility of getting some estimates on constants in the inequalities in terms of the degree $d$ of the $U$-statistic. In some of these examples (see Section \ref{sec:examples}) we can also estimate the norms in our inequalities and it turns out that the estimates simplify, not all terms in the exponent in the second inequality of Corollary \ref{cor:U-stat-tail} are important. As we will see in applications, our inequalities and those from the other papers are not directly comparable, while in some cases ours perform in a better way, especially when one takes into account the dependence of the estimates on the variance of the $U$-statistic, the advantage of \cite{bonnet2024concentration} is the Poisson correction for large $u$, while the nice feature of \cite{schulte2023moderate} is a good and explicit subgaussian constant, especially for small $d$.

Another approach, used in \cite{MR3485348,MR4372142}, is to exploit certain self-bounding properties of nonnegative $U$-statistics, combined with various functional inequalities. This approach allows to obtain concentration inequalities in a simpler way than via our methods or those described above, but does not allow to easily catch the dependence of the upper tail on the variance, it relies instead on the expectation.

In the general case, when the function $g$ may change signs, as explained above, our inequalities, despite having a complicated form, seem to better correspond to the limiting behaviour of the $U$-statistic as they correspond to two-sided estimates on tails of multiple Gaussian integrals, which are their distributional limits. It also seems that in the general case the assumptions of \cite{schulte2023moderate,bonnet2024concentration}, which rely on passing to absolute values of the function $g$, will not allow to capture the cancellations in the $U$-statistics well enough to recover the tail behaviour of the limiting Gaussian integral.

\end{remark}

\subsection{Laws of the iterated logarithm}\label{sec:LIL}

The classical Strassen's Law of the Iterated Logarithm asserts that if $X_1,X_2,\ldots$ are i.i.d. random variables with $\E X_i = 0$ and $\E X_i^2 = \sigma^2$, then, denoting $S_n = X_1+\ldots+X_n$, with probability one,
\begin{align}\label{eq:Strassen's-LIL}
  \limsup_{n\to \infty} \frac{|S_n|}{\sqrt{2n\loglog n}} = \sigma.
\end{align}
Moreover, the cluster set of the sequence $\frac{|S_n|}{\sqrt{2n\loglog n}}$ is almost surely equal to the interval $[-\sigma,\sigma]$. One can also prove that if the right-hand side of \eqref{eq:Strassen's-LIL} is finite with positive probability, then $\E X_i = 0$, $\E X_i^2 < \infty$.

This result has been generalized in multiple directions, to random variables without second moments (with necessarily a different normalization), to the Banach space setting, to $U$-statistics in independent random variables. In continuous time, Gnedenko proved a version for L\'{e}vy processes $(X_t)_{t\ge 0}$ such that $\E X_1 = 0$, $\E X_1^2 < \infty$. This version allows to obtain in particular the Law of the Iterated Logarithm for $I_t^{(1)}(g)$ with $g \in L_2(\XX)$.

Our main result in this section is the following theorem, providing the Law of the Iterated Logarithm for multiple stochastic integrals of arbitrary order.

\begin{theorem}\label{thm:LIL-integrals} Assume that $g \in L_2^s(\XX^d)$. Then with probability one
\begin{align}\label{eq:bounded-LIL-integrals}
  \limsup_{t\to \infty} \frac{|I^{(d)}_t(g)|}{(2 t\log\log t)^{d/2}} < \infty
\end{align}
and
\begin{align}\label{eq:compact-LIL-integrals}
  \underset{t \to \infty}{\mathbf{lim\; set}} \Big\{ \frac{1}{(2t \log \log t)^{\frac{d}{2}}} I^{(d)}_t(g) \Big\} = \Big\{ \int_{\XX^d} g(x_1,...,x_d)\prod_{k=1}^d \varphi(x_k) & d\lambda^{\otimes d}(x_1,...,x_d)  :\\
   &\varphi \in L_2(\XX), \; \|\varphi\|_2 \le 1\Big\}.\nonumber
\end{align}
\end{theorem}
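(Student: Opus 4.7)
The plan is to combine Corollary \ref{cor:simplified-estimate} with a truncation argument and the first Borel--Cantelli lemma along the dyadic grid $T_m = 2^m$. I would first write $g = g^{(m)} + r^{(m)}$, where $g^{(m)}$ is a symmetric, bounded, compactly supported approximation to $g$ (with respect to $\lambda^{\otimes d}$) satisfying $\|r^{(m)}\|_2 \le 2^{-m}$, chosen so that all parameters $B_k(g^{(m)})$ entering Corollary \ref{cor:simplified-estimate} are finite. Applied to $g^{(m)}$ with $u = K(T_m \loglog T_m)^{d/2}$, that corollary yields, once $T_m$ exceeds a threshold depending on the $B_k(g^{(m)})$, a bound dominated by the $k=0$ branch of the minimum, of the form $(\log T_m)^{-c_d K^{2/d}/\|g^{(m)}\|_2^{2/d}}$, which is summable in $m$ as soon as $K$ is large enough compared to $\|g\|_2$. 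For the $L_2$-small remainder, Doob's martingale maximal inequality gives $\p(\sup_{t\le T_m}|I^{(d)}_t(r^{(m)})| \ge T_m^{d/2}) \le C_d \|r^{(m)}\|_2^2 \le C_d 4^{-m}$, also summable. Borel--Cantelli together with the observation that $t\loglog t$ is comparable on $[T_{m-1}, T_m]$ then produces $\limsup_t |I^{(d)}_t(g)|/(t\loglog t)^{d/2} \le C_d\|g\|_2 < \infty$ almost surely.

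\textbf{Part 2 (Compact LIL).} For the cluster set identification I would perform a finite-dimensional reduction combined with Strassen's LIL for L\'evy processes. Fix an orthonormal basis $(e_i)_{i\ge 1}$ of $L_2(\XX,\lambda)$ with $e_i \in L_1\cap L_2$ (possible thanks to $\sigma$-finiteness) and let $P_M$ be orthogonal projection onto $\mathrm{span}(e_1,\ldots,e_M)$; set $g_M$ to be $g$ projected in each variable. The product formula for Poisson multiple integrals lets me express $I^{(d)}_t(g_M)$ as a degree-$d$ polynomial in the $\R^M$-valued L\'evy process $V_t = (I^{(1)}_t(e_1),\ldots, I^{(1)}_t(e_M))$, plus corrections living in lower chaoses that are $o((t\loglog t)^{d/2})$ after normalization (by Part 1 applied to each component). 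The multivariate Strassen LIL gives the cluster set of $V_t/\sqrt{2t\loglog t}$ as the Euclidean unit ball of $\R^M$, and continuity of the leading homogeneous polynomial then forces the cluster set of $I^{(d)}_t(g_M)/(2t\loglog t)^{d/2}$ to equal $\mathcal{K}_{g_M}$. To pass to infinite dimension, I would apply Part 1 to $g - g_M$, obtaining $\limsup_t |I^{(d)}_t(g - g_M)|/(t\loglog t)^{d/2} \le C_d \|g - g_M\|_2 \to 0$ almost surely, while the Hausdorff convergence $\mathcal{K}_{g_M} \to \mathcal{K}_g$ follows from the fact that the $d$-linear form $\varphi\mapsto \int g \prod_k \varphi\, d\lambda^{\otimes d}$ is weakly sequentially continuous on the unit ball of $L_2(\XX)$ (since $g\in L_2(\XX^d)$ defines Hilbert--Schmidt-type compact contractions in each variable). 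Both inclusions in \eqref{eq:compact-LIL-integrals} then follow.

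\textbf{Main obstacle.} I expect the delicate step to be the uniform-in-time control required in the passage from finite to infinite dimension: while Strassen's LIL handles $V_t$ and the product formula reduces $I^{(d)}_t(g_M)$ to a polynomial in $V_t$, one must make the asymptotic cluster set identification survive a supremum over time and a simultaneous limit $M\to\infty$, which requires careful bookkeeping of the lower-chaos correction terms and of the error $g-g_M$. A subsidiary difficulty in Part 1 is the simultaneous choice of truncation magnitude and spatial window realizing $\|r^{(m)}\|_2\le 2^{-m}$ while keeping the norms $B_k(g^{(m)})$ growing only polynomially in $m$: for pathologically irregular $g\in L_2$ this may only be achievable along a sparse subsequence, in which case an additional dyadic interpolation via the $L_2$-maximal inequality will be needed to fill the gaps.
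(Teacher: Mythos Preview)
Your Part 2 is essentially the paper's strategy: finite-dimensional approximation plus Strassen's LIL, followed by an $L_2$-approximation argument using Part 1 on the remainder. The paper simply uses step functions $h\in\mathcal{E}_d^s$ rather than an orthonormal-basis projection; because such $h$ are supported off the diagonal and built from disjoint sets, formula \eqref{eq:step-function-integral} gives $I^{(d)}_t(h)$ \emph{exactly} as a polynomial in independent compensated Poisson processes, so no product formula and no lower-chaos corrections are needed. Your version is workable but noticeably messier.

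Part 1, however, has a real gap, and it is precisely the one you flag as a ``subsidiary difficulty''. For a general $g\in L_2^s(\XX^d)$ there is no reason you can simultaneously achieve $\|r^{(m)}\|_2\le 2^{-m}$ and keep the conditional sup-norms $B_k(g^{(m)})$ under control: the first forces your truncation level to grow at a rate dictated by the tail of $|g|$, which can be arbitrarily slow, while the second forces it to grow no faster than $(T_m/\log m)^{k/2}$. Conversely, if you weaken the requirement on $\|r^{(m)}\|_2$, Doob's inequality only gives a bound of order $\|r^{(m)}\|_2^2/(\log m)^d$, and $\sum_m 1/(\log m)^d=\infty$, so any fixed (or slowly improving) remainder fails to be summable. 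Your proposed ``sparse subsequence plus interpolation'' does not rescue this: the maximal inequality transfers tail bounds between endpoints of a block, but the blocks between your sparse subsequence can be arbitrarily long, and you have no exponential bound there.

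The paper resolves this tension by changing both halves of the argument. First, the truncation is not a generic bounded-compactly-supported cutoff but the specific level-set cutoff
\[
\widetilde g_n = g\,\mathbf 1_{A_n},\qquad A_n=\bigcap_{I\subsetneq[d]}\Big\{x:\int_{\XX^I}g^2(x)\,d\lambda^{|I|}\le (2^n/\log n)^{d-|I|}\Big\},
\]
which is designed so that \emph{every} branch $k=0,\ldots,d$ of the minimum in Corollary \ref{cor:simplified-estimate} equals exactly $\log n$ at the deviation level $C_d(2^n\log n)^{d/2}$; this is what makes the good part summable without any further hypothesis on $g$. Second, the remainder $\widehat g_n=g-\widetilde g_n$ is handled not by Doob but by the $L_1(\XX^{I^c},L_2(\XX^I))$-based polynomial estimate of Proposition \ref{prop:Czebyszew}, whose right-hand side, after summing in $n$ via the elementary Lemma \ref{le:series}, collapses to a multiple of $\int h_I^2\,d\lambda^{|I^c|}=\|g\|_2^2$. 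It is this combination---adaptive truncation tuned to the exponential inequality, plus an $L_1$-type Chebyshev bound for the tail piece---that is the missing idea in your Part 1.
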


Recall that the set on the right-hand side of \eqref{eq:compact-LIL-integrals} has been already introduced in \eqref{eq:cluster-set-intro} and denoted by $\mathcal{K}_g$. We will use this notation in the subsequent part of the article.

As a corollary to the above result, Theorem \ref{thm:Reitzner-Schulte} and decomposition \eqref{eq:U-stat-chaos-expansion}, we obtain the following Law of the Iterated Logarithm for Poisson $U$-statistics.

\begin{cor}\label{cor:Ustat-LIL}
  Assume that $g \in L_1^s(\XX^d)$, $g \not\equiv 0$, and $\E |U(g)|^2 < \infty$. Let $g_n$ be the functions given by \eqref{eq:U-stat-chaos-kernels}. Let $m = \min\{1\le n \le d\colon g_n \not\equiv 0\}$. Then, with probability one,
  \begin{align}\label{eq:bounded-LIL-Ustat}
    \limsup_{t\to \infty} \frac{|U_t(g)-\E U_t(g)|}{t^{d-m/2}(2\loglog t)^{m/2}} < \infty
  \end{align}
  and
\begin{multline}\label{eq:compact-LIL-Ustat}
  \underset{t \to \infty}{\mathbf{lim\; set}} \Big\{ \frac{U_t(g) - \E U_t(g)}{t^{d-m/2}(2\loglog t)^{m/2}} \Big\}\\
  = \binom{d}{m} \Big\{ \int_{\XX^m} g_m(x_1,...,x_m)\prod_{k=1}^m \varphi(x_k) d\lambda^{\otimes m}(x_1,...,x_m) : \|\varphi\|_2 \le 1\Big\}.
\end{multline}
\end{cor}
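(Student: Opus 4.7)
By Theorem \ref{thm:Reitzner-Schulte}(ii), the assumption $U(g)\in L_2$ ensures $g_n\in L_1^s(\XX^n)\cap L_2^s(\XX^n)$ for every $n$, so the chaos decomposition \eqref{eq:U-stat-chaos-expansion} applies:
\[
U_t(g)-\E U_t(g)=\sum_{n=1}^d\binom{d}{n}\,t^{d-n}\,I^{(n)}_t(g_n).
\]
Since formula \eqref{eq:U-stat-chaos-kernels} with $n=d$ gives $g_d=g\not\equiv 0$, the set $\{n:g_n\not\equiv 0\}$ is nonempty, and $m\in\{1,\ldots,d\}$ is well-defined. The plan is to show that under the chosen normalization $t^{d-m/2}(2\loglog t)^{m/2}$ only the summand corresponding to $n=m$ contributes asymptotically, and then to invoke Theorem \ref{thm:LIL-integrals} applied to the kernel $g_m$.

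For $n<m$ the term vanishes identically. For $n>m$ with $g_n\not\equiv 0$, the bounded LIL of Theorem \ref{thm:LIL-integrals} yields, almost surely, $|I^{(n)}_t(g_n)|=O\bigl((t\loglog t)^{n/2}\bigr)$ as $t\to\infty$, so that
\[
\frac{\binom{d}{n}\,t^{d-n}\,|I^{(n)}_t(g_n)|}{t^{d-m/2}(2\loglog t)^{m/2}}
=O\!\left(\frac{(\loglog t)^{(n-m)/2}}{t^{(n-m)/2}}\right)\xrightarrow[t\to\infty]{}0\quad\text{a.s.},
\]
since $n-m\ge 1$. Summing over $n\neq m$ and isolating the remaining term produces, almost surely,
\[
\frac{U_t(g)-\E U_t(g)}{t^{d-m/2}(2\loglog t)^{m/2}}
=\binom{d}{m}\,\frac{I^{(m)}_t(g_m)}{(2t\loglog t)^{m/2}}+o(1).
\]

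Applying Theorem \ref{thm:LIL-integrals} to $g_m\in L_2^s(\XX^m)$ directly yields \eqref{eq:bounded-LIL-Ustat}, and because an additive $o(1)$ term leaves the cluster set of a function unchanged, the same theorem identifies the cluster set in \eqref{eq:compact-LIL-Ustat} as $\binom{d}{m}\mathcal{K}_{g_m}$. In short, the corollary is obtained by mere bookkeeping of the exponents of $t$ and $\loglog t$ in the chaos expansion; the only non-trivial probabilistic ingredient is the bounded (and compact) LIL from Theorem \ref{thm:LIL-integrals}, and there is no substantial additional obstacle beyond this reduction to the dominant chaos.
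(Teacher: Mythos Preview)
Your proof is correct and follows exactly the approach indicated in the paper: combine the chaos decomposition \eqref{eq:U-stat-chaos-expansion} (valid by Theorem \ref{thm:Reitzner-Schulte}) with Theorem \ref{thm:LIL-integrals}, and observe that the terms with $n>m$ are $o(1)$ after normalization. The paper does not spell out the details, merely stating that the corollary is an easy consequence of Theorem \ref{thm:LIL-integrals} and \eqref{eq:U-stat-chaos-expansion}, so your write-up is in fact more explicit than the original.
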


\begin{remark}
  The form of the cluster set in \eqref{eq:compact-LIL-integrals} and \eqref{eq:compact-LIL-Ustat} is analogous as in the compact Law of the Iterated Logarithm for $U$-statistics in independent random variables with square integrable kernels (\cite{MR1348376}, see also the monograph \cite{MR1666908}). It was observed in \cite{MR1385404} that square integrability of the kernel is not necessary for the bounded LIL for $U$-statistics in independent random variables. The necessary and sufficient conditions in this case are more complicated \cite{MR1825163,MR2408582} and the form of the limit set beyond the square integrable case is in general not known (see \cite{MR2033884} for some conjectures and their proofs in the case of certain special classes of kernels).

  The same problems may be stated in the case of multiple Poisson integrals and Poisson $U$-statistics. The square integrable case is arguably the most important one, given that this is the setting in which the Wiener--It\^{o} integral is defined, note however that one may define it beyond $L_2^s(\XX)$, for example on $L_1^s(\XX^d)$. The case of $g \in L_1^s(\XX^d)\setminus L_2^s(\XX^d)$ is not covered by our results.  For Poisson $U$-statistics, one may also ask what happens beyond the $L_1^s(\XX^d)$ case, for instance if it may happen that $U(g)$ is well defined and square integrable or, more fundamentally, under what conditions on $g$ is $U(g)$ well defined. It is well known (see, e.g., \cite{MR3791470}) that for $d=1$, the necessary and sufficient conditions for almost sure absolute summability of the series defining $U(g)$ is $\int |g|\wedge 1 d\lambda < \infty$. To our best knowledge the case of general $U$-statistics is not known. We remark that similar questions were studied in detail for multiple stochastic integrals with respect to certain classes of L\'evy processes in the monograph \cite{MR1167198}.
\end{remark}

  \begin{remark}
  Note that if $g \ge 0$ and $g$ is not identically equal to zero, then $m$ in Corollary \ref{cor:Ustat-LIL} equals to one and as a consequence it is the first term in the chaos expansion, which determines the asymptotic behaviour of the centered Poisson $U$-statistic $U_t(g)- \E U_t(g)$. This is in contrast with the case of classical $U$-statistics, whose behaviour, even in the nonnegative case, may be governed by higher order terms in the corresponding Hoeffding expansion. This observation has consequences for functionals coming from stochastic geometry, which we will consider in the next section.
  \end{remark}

\section{Applications}\label{sec:examples}

In this section we will present specific applications of our abtract results stated above. Depending on the example we will focus more on the Law of the Iterated Logarithm or on exponential inequalities. Our goal is to give an overview of possible applications and improvements/complements of known results, but for simplicity we will not consider any of the examples in the greatest possible generality, even if our results can be applied in such setting. In some cases (in particular for subgraph counts) we will provide a detailed comparison with existing results, in others, especially if they are similar to previously described situations, we will only sketch it briefly.

\subsection{Subgraph counts in random geometric graphs}\label{sec:subgraph-counts}
Assume that $\XX$ is a Polish space equipped with a metric $\rho$, $\mathcal{X}$ is the Borel $\sigma$-field and $\widehat{\eta}$ is simple. The Gilbert graph $G_{t,r_t}$ related to $\eta$ is a random geometric graph in which the vertices are points of the Poisson process $\eta_t$ and two distinct vertices are connected with an edge iff their distance is at most $r_t$.

Let $H = ([d],E)$ be a fixed connected graph on $d\ge 2$ vertices. For $t>0$ by $S_t$ we will denote the number of homeomorphic copies of $H$ in $G_{t,r_t}$. Recalling the definition \eqref{eq:symmetrization-definition} of the symmetrization of a function, we may observe that
\begin{displaymath}
  S_t = \frac{1}{Aut(H)} \sum_{(x_1,\ldots,x_d)\in (\eta_t^d)_{\neq}} g_{H,r_t}(x_1,\ldots,x_d) = \frac{1}{Aut(H)}U_t(g_{H,r_t}^{sym}),
\end{displaymath}
with $g_{H,r_t}\colon \XX^d \to \R$ given by
\begin{align}\label{eq:formula-for-g}
  g_{H,r_t}(x_1,\ldots,x_d) = \prod_{\{k,l\}\in E} \ind{\rho(x_{k},x_{\ell})\le r_t},
\end{align}
where $Aut(H)$ is the number of automorphism of $H$.

The statistic $S_t$ has been extensively studied in the literature with focus on laws of large numbers, central limit theorems and concentration inequalities. Using Corollary \ref{cor:Ustat-LIL} we may complement existing limit theorems with a LIL.

\begin{prop} Assume that $r_t = r$ is independent of $t$ and that $\E S_1^2 < \infty$. Let
\begin{displaymath}
\sigma = \frac{d}{Aut(H)} \sqrt{\int_\XX \Big(\int_{\XX^{d-1}} g_{H,r}^{sym} d\lambda^{\otimes (d-1)}(x_1,\ldots,x_{d-1})\Big)^2d\lambda(x_d)}.
\end{displaymath}
Then with probability one,
\begin{displaymath}
  \liminf_{t \to \infty} \frac{S_t - \E S_t}{t^{d-1/2}\sqrt{2\log \log t}} = - \sigma \textrm{ and } \limsup_{t \to \infty} \frac{S_t - \E S_t}{t^{d-1/2}\sqrt{2\log \log t}} = \sigma.
\end{displaymath}
\end{prop}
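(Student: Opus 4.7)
The plan is to apply Corollary \ref{cor:Ustat-LIL} directly to $U_t(g_{H,r}^{sym})$ and then rescale by $1/\mathrm{Aut}(H)$. First I would verify the hypotheses. The function $g := g_{H,r}^{sym}$ is symmetric by construction, nonnegative, and bounded by $1$; its $L_1$ norm equals $\mathrm{Aut}(H)\,\E S_1$, which is finite (otherwise the statement becomes vacuous), so $g \in L_1^s(\XX^d)$. Square integrability of $U(g)$ is exactly the assumption $\E S_1^2 < \infty$.

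Next I would identify the chaos level at which the expansion starts. Since $g \ge 0$ and not identically zero, Fubini's theorem shows that the first-order kernel
\[
g_1(x) = \int_{\XX^{d-1}} g_{H,r}^{sym}(x,y_1,\ldots,y_{d-1})\,d\lambda^{\otimes(d-1)}(y_1,\ldots,y_{d-1})
\]
is nonnegative and not identically zero, so $m = 1$ in the notation of Corollary \ref{cor:Ustat-LIL}. This is precisely the situation highlighted in the remark immediately following that corollary.

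Applying Corollary \ref{cor:Ustat-LIL} now yields that the cluster set of $(t^{d-1/2}\sqrt{2\loglog t})^{-1}\bigl(U_t(g) - \E U_t(g)\bigr)$ as $t\to\infty$ equals
\[
d\Bigl\{\int_\XX g_1(x)\varphi(x)\,d\lambda(x)\colon \varphi \in L_2(\XX),\ \|\varphi\|_2 \le 1\Bigr\}.
\]
The image of the closed unit ball of $L_2(\XX)$ under the continuous linear functional $\varphi \mapsto \int g_1 \varphi\, d\lambda$ is convex and symmetric about $0$, hence an interval of the form $[-M,M]$. By the Cauchy--Schwarz inequality $M = \|g_1\|_2$, attained at $\varphi = \pm g_1/\|g_1\|_2$, so the cluster set is exactly $[-d\|g_1\|_2,\, d\|g_1\|_2]$.

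Finally, dividing by $\mathrm{Aut}(H)$ and observing that
\[
\|g_1\|_2^2 = \int_\XX \Bigl(\int_{\XX^{d-1}} g_{H,r}^{sym}(x_1,\ldots,x_{d-1},x_d)\,d\lambda^{\otimes(d-1)}(x_1,\ldots,x_{d-1})\Bigr)^2 d\lambda(x_d),
\]
one sees that the cluster set of $(S_t - \E S_t)/(t^{d-1/2}\sqrt{2\loglog t})$ is the closed interval $[-\sigma,\sigma]$, which immediately gives the claimed $\liminf = -\sigma$ and $\limsup = \sigma$. There is essentially no real obstacle here: all the analytic work is packaged inside Corollary \ref{cor:Ustat-LIL}, and the remaining task is the elementary translation from the abstract cluster-set description to the explicit constant $\sigma$ via Cauchy--Schwarz.
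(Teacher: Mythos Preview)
Your proposal is correct and matches the paper's approach: the proposition is stated immediately after the sentence ``Using Corollary \ref{cor:Ustat-LIL} we may complement existing limit theorems with a LIL,'' and no separate proof is given because it is exactly the direct application you describe, with $m=1$ forced by nonnegativity (as the paper notes in the remark following Corollary \ref{cor:Ustat-LIL}). The only cosmetic omission is the degenerate case $g\equiv 0$ a.e., where $\sigma=0$ and $S_t\equiv 0$, but this is trivial.
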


We remark that in \cite{MR3849811}  Bachmann and Reitzner proved that for $\XX = \R^N$, if $S_1 < \infty$ a.s., then it has all moments, in particular the assumptions of the above proposition are automatically satisfied.

Bachmann and Reitzner obtained also concentration inequalities for $S_t$, which were later complemented in various specialized settings by Schulte and Th\"ale \cite{schulte2023moderate} and Bonnet and Gusakova \cite{bonnet2024concentration}.

We will now present concentration estimates for $S_t$, which can be obtained from our Corollary \ref{cor:U-stat-tail}. Next we will specialize to the case, when $\lambda$ is the Lebesgue measure restricted to a convex body $W$ and in this special setting we will compare our results with those of \cite{MR3849811,schulte2023moderate,bonnet2024concentration}.

Our goal is to apply Corollary \ref{cor:U-stat-tail} to $U_t(g_{H,r_t}^{sym})$. To this end we need to bound $\|\|(g_{H,r_t}^{sym})_n\|_\mathcal{J}\|_\infty$ (recall \eqref{eq:U-stat-chaos-kernels}). We have the following estimate.
\begin{prop}\label{prop:subgraph-norms} For $x \in \XX$ and $r > 0$ let $B(x,r) = \{y \in \XX\colon \rho(x,y) \le r\}$ be the ball with radius $r$ and center $x$.  Define
\begin{displaymath}
A_{r,k} = \int_\XX \lambda(B(x,r))^kd\lambda(x),
\end{displaymath}
and
\begin{displaymath}
  B_r = \esssup _{x \in \XX}  \lambda(B(x,r)).
\end{displaymath}
Then for $n = 1,2,\ldots,d$,
\begin{align}\label{eq:subgraphs-HS}
  \|(g_{H,r_t}^{sym})_n\|_{\{[n]\}} \le \sqrt{A_{dr_t, 2d - n-1}} \le B_{dr_t}^{d- \frac{n+1}{2}}\sqrt{\lambda(\XX)}
\end{align}
and for $\mathcal{J} \in \mathcal{P}_{[n]}$, with $|\mathcal{J}|\ge 2$,
\begin{align}\label{eq:subgraphs-finer-partitions-k=0}
 \|(g_{H,r_t}^{sym})_n\|_{\mathcal{J}} \le B_{dr_t}^{d-\frac{n}{2}}.
\end{align}

Moreover for $n=1,\ldots,d$, $k = 1,\ldots,n$ and $\mathcal{J} \in \mathcal{P}_{k+1,\ldots,n}$,
\begin{align}\label{eq:subgraphs-finer-partitions}
  \| \|(g_{H,r_t}^{sym})_n\|_{\mathcal{J}}\|_\infty \le B_{dr_t}^{d - \frac{n+k}{2}}.
\end{align}
\end{prop}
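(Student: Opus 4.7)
The plan is to extract all three estimates from a single pointwise domination. Since $H$ is connected on $d$ vertices its graph diameter is at most $d-1$, so whenever $g_{H,r_t}(x_{\sigma(1)},\ldots,x_{\sigma(d)})=1$ for some permutation $\sigma$, every pair of points lies within metric distance $(d-1)r_t\le dr_t$. Combined with $g_{H,r_t}^{sym}\le 1$ (as an average of $\{0,1\}$-valued indicators) this gives $g_{H,r_t}^{sym}(x_1,\ldots,x_d)\le \prod_{j=2}^d \ind{x_j\in B(x_1,dr_t)}$, and integrating the last $d-n$ variables yields the master estimate
\[
(g_{H,r_t}^{sym})_n(x_1,\ldots,x_n)\le \lambda(B(x_1,dr_t))^{d-n}\prod_{j=2}^n \ind{x_j\in B(x_1,dr_t)}\le B_{dr_t}^{d-n}\prod_{j=2}^n \ind{x_j\in B(x_1,dr_t)}.
\]

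For \eqref{eq:subgraphs-HS} I would square the first form of the master estimate, integrate the indicator factors, and apply Fubini: each of $x_2,\ldots,x_n$ contributes $\lambda(B(x_1,dr_t))$, and the remaining integral over $x_1$ is exactly $\int \lambda(B(x_1,dr_t))^{2(d-n)+(n-1)}d\lambda(x_1)=A_{dr_t,2d-n-1}$; the second inequality of \eqref{eq:subgraphs-HS} is then $\lambda(B(\cdot,dr_t))\le B_{dr_t}$ combined with $\int d\lambda=\lambda(\XX)$. For \eqref{eq:subgraphs-finer-partitions} I would fix $x_1,\ldots,x_k$ (the essential supremum is taken at the end): by the master estimate the conditional kernel is dominated by $B_{dr_t}^{d-n}\prod_{j=k+1}^n\ind{x_j\in B(x_1,dr_t)}$, which factorizes across the blocks of any $\mathcal{J}\in\mathcal{P}_{\{k+1,\ldots,n\}}$, and one Cauchy--Schwarz per block against $\varphi_i$ gives $\lambda(B(x_1,dr_t))^{|J_i|/2}\|\varphi_i\|_2\le B_{dr_t}^{|J_i|/2}$; multiplying, with $\sum_i|J_i|=n-k$, produces $B_{dr_t}^{d-n+(n-k)/2}=B_{dr_t}^{d-(n+k)/2}$.

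The main obstacle is \eqref{eq:subgraphs-finer-partitions-k=0}: no essential sup is taken and the bound $B_{dr_t}^{d-n/2}$ has no $\sqrt{\lambda(\XX)}$ room that the direct $L_2$ approach would cost. The plan is to exploit $|\mathcal{J}|\ge 2$ through a double use of Cauchy--Schwarz. Let $\mathcal{J}=\{J_1,\ldots,J_m\}$ with $1\in J_1$; since $m\ge 2$ we have $|J_1^c|\ge 1$. First use Cauchy--Schwarz inside the integral over $x_{J_1\setminus\{1\}}$ against $\varphi_1(x_1,\cdot)$, producing $\lambda(B(x_1,dr_t))^{(|J_1|-1)/2}\|\varphi_1(x_1,\cdot)\|_2$; the surviving indicators $\prod_{j\in J_1^c}\ind{x_j\in B(x_1,dr_t)}$ force $x_1$ to lie in $A(x_{J_1^c}):=\bigcap_{j\in J_1^c}B(x_j,dr_t)$. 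Since $|J_1^c|\ge 1$, $\lambda(A(x_{J_1^c}))\le B_{dr_t}$, so Cauchy--Schwarz in $x_1$ on $A(x_{J_1^c})$ yields a factor $B_{dr_t}^{1/2}\bigl(\int_{A(x_{J_1^c})}\|\varphi_1(x_1,\cdot)\|_2^2\, d\lambda(x_1)\bigr)^{1/2}$. A final Cauchy--Schwarz in $x_{J_1^c}$ against $\prod_{i\ne 1}|\varphi_i(x_{J_i})|$, followed by Fubini, reduces the leftover to $\int \|\varphi_1(x_1,\cdot)\|_2^2\,\lambda(B(x_1,dr_t))^{|J_1^c|}\,d\lambda(x_1)\le B_{dr_t}^{|J_1^c|}\|\varphi_1\|_2^2$, and the exponents assemble to $(|J_1|-1)/2+1/2+|J_1^c|/2=n/2$, which together with the $B_{dr_t}^{d-n}$ prefactor from the master estimate gives the advertised $B_{dr_t}^{d-n/2}$.
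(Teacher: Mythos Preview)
Your proof is correct. The master estimate and the arguments for \eqref{eq:subgraphs-HS} and \eqref{eq:subgraphs-finer-partitions} coincide with the paper's (the paper phrases the clique comparison as $g_{H,r}^{sym}\le g_{K_d,dr}$ and then reduces $g_n$ to $g_{K_n,r}$ via the same integration, but the content is identical).

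The argument for \eqref{eq:subgraphs-finer-partitions-k=0} is where you differ. The paper first reduces to a two-block partition $\{J_1,J_1^c\}$ with $J_1=\{1,\ldots,\ell\}$, then uses that the clique indicator $\prod_{i,j}\ind{\rho(x_i,x_j)\le r}$ is dominated simultaneously by the ``star'' indicator $\prod_i\ind{\rho(x_i,x_1)\le r}$ centred in $J_1$ and by $\prod_i\ind{\rho(x_i,x_d)\le r}$ centred in $J_1^c$. A single Cauchy--Schwarz then gives
\[
\Big(\int g_{K_d,r}\,\varphi\,\psi\Big)^2\le \int \varphi^2\prod_i\ind{\rho(x_i,x_1)\le r}\cdot\int \psi^2\prod_i\ind{\rho(x_i,x_d)\le r}\le B_r^{d-\ell}\cdot B_r^{\ell},
\]
after which \eqref{eq:from-g-to-g_n} supplies the $B_r^{d-n}$ prefactor. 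Your route --- fix the centre $x_1$, peel off $x_{J_1\setminus\{1\}}$ by Cauchy--Schwarz, then use that $x_1$ is trapped in an intersection of balls indexed by $J_1^c$ (nonempty because $|\mathcal{J}|\ge 2$) to control the $x_1$-integral, and finish with Cauchy--Schwarz over $x_{J_1^c}$ plus Fubini --- is a bit longer but equally valid and assembles the same exponent $n/2$. The paper's trick buys a one-line proof by exploiting the full clique symmetry before passing to the star bound; your approach is perhaps more systematic and would adapt more readily to kernels that are not pointwise dominated by a clique indicator.
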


The proof of this proposition is presented in Section \ref{sec:proofs-subgraphs}.
Let us now apply it to obtain concentration inequalities. First, note that \eqref{eq:subgraphs-HS} implies that

\begin{multline}\label{eq:subgraph-variance-upper-bound}
  \Var(U_t(g_{H,r_t}^{sym})) = \sum_{n=1}^d n!\binom{d}{n}^2t^{2d-n}\|(g_{H,r_t}^{sym})_n\|_{\{[n]\}}^2 \le C_d \sum_{n=1}^d t^{2d-n} A_{dr_t,2d-n-1}\\
\le C_d \lambda(\XX) \sum_{n=1}^d t^{2d-n}B_{dr_t}^{2d-n-1} \le \widetilde{C}_d \lambda(\XX)\Big(t^{2d-1} B_{dr_t}^{2d-2} + t^d B_{dr_t}^{d-1}\Big),
\end{multline}
where in the last inequality we used the fact that depending on whether $tB_{dr_t}$ exceeds 1 or not, the sequence $t^{2d-n}B_{dr_t}^{2d-n-1}$ is decreasing or increasing in $n$.

We may apply the estimate of the Corollary \ref{cor:U-stat-tail}, gather together all the terms involving the norms $\|(g_{H,r_t}^{sym})_n\|_{\{[n]\}}$, $n=1,\ldots,d$ and use \eqref{eq:subgraphs-finer-partitions-k=0} and \eqref{eq:subgraphs-finer-partitions} to estimate the remaining ones. In the exponent we will get (up to the constant $c_d$) the minimum of $\frac{u^2}{\Var(S_t)}$ and expressions of the form
\begin{align}\label{eq:list-of-terms}
  \Big(\frac{u}{(tB_{dr_t})^{d- (n+k)/2}}\Big)^{\frac{2}{2k+|\mathcal{J}|}},
\end{align}
where $n=1,\ldots,d$, $k = 0,\ldots,n$, $\mathcal{J} \in \mathcal{P}_{\{k+1,\ldots,n\}}$ and $(n,k,\mathcal{J}) \neq (n,0,\{[n]\})$.
Moreover, we can assume that all these expressions are greater than one, otherwise the inequality may be made trivial (the right-hand side greater than one), by an adjustment of the constant $c_d$. For this reason, for fixed $k$ and $n$, the minimal value in \eqref{eq:list-of-terms} will be obtained for the largest possible $|\mathcal{J}|$, i.e., for $|\mathcal{J}| = n - k$. In this case the expression in \eqref{eq:list-of-terms} equals $(tB_r) (\frac{u}{(tB_{dr_t})^d})^{2/(k+n)}$. Depending on whether $u/(tB_{dr_t})^d$ exceeds one or not, the minimal value is achieved for $n=k=d$ or $n=k=1$ (and also $n=2,k=0$). We therefore obtain the following proposition.

\begin{prop}\label{prop:subgraph-tail}
Let $H$ be a connected graph on $d$ vertices. Then for any $u > 0$,
\begin{align}\label{eq:subgraph-counts-bound}
  \p(|S_t - \E S_t| \ge u) &\le 2\exp\Big(-c_d\min\Big(\frac{u^2}{\Var(S_t)},\frac{u}{(tB_{dr_t})^{d-1}},u^{1/d}\Big)\Big).
\end{align}
\end{prop}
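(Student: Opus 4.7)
The plan is to apply Corollary \ref{cor:U-stat-tail}(ii) to $U_t(g_{H,r_t}^{sym}) = Aut(H)\cdot S_t$ and reorganize the resulting minimum, taken over admissible triples $(n,k,\mathcal{J})$ with $1 \le n \le d$, $0 \le k \le n$, $\mathcal{J} \in \PP_{\{k+1,\ldots,n\}}$, into three groups that will produce the three terms in \eqref{eq:subgraph-counts-bound}. The combinatorial factor $Aut(H) \le d!$ depends only on $d$ and may be absorbed into $c_d$, so the only genuine inputs besides the corollary are the norm estimates of Proposition \ref{prop:subgraph-norms}.

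I would first isolate the \emph{Hilbert--Schmidt} triples $(n, 0, \{[n]\})$ for $n = 1, \ldots, d$, for which $2k + |\mathcal{J}| = 1$. The corresponding expressions from Corollary \ref{cor:U-stat-tail}(ii) are $u^2/(t^{2d-n}\|(g_{H,r_t}^{sym})_n\|_2^2)$, and their minimum over $n$ is comparable, up to $d$-dependent constants, to $u^2/\sum_{n=1}^d t^{2d-n}\|(g_{H,r_t}^{sym})_n\|_2^2$, which by \eqref{eq:U-stati-variance} is a constant multiple of $u^2/\Var(S_t)$. This yields the first term of \eqref{eq:subgraph-counts-bound}.

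For every remaining triple I would use Proposition \ref{prop:subgraph-norms} (specifically \eqref{eq:subgraphs-finer-partitions-k=0} when $k = 0$ and \eqref{eq:subgraphs-finer-partitions} when $k \ge 1$) to replace $\|\|(g_{H,r_t}^{sym})_n\|_\mathcal{J}\|_\infty$ by $B_{dr_t}^{d-(n+k)/2}$, producing exactly the quantity \eqref{eq:list-of-terms}. Since \eqref{eq:subgraph-counts-bound} is trivial whenever its right-hand side exceeds $1$, one may work in the regime where the base $u/(tB_{dr_t})^{d-(n+k)/2}$ exceeds a suitable $d$-dependent constant for every triple. Under this assumption, the value of \eqref{eq:list-of-terms} is monotone in the exponent $2/(2k+|\mathcal{J}|)$ and decreases as $|\mathcal{J}|$ grows; hence, for fixed $(n,k)$, the minimum over $\mathcal{J} \in \PP_{\{k+1,\ldots,n\}}$ is attained at the finest partition with $|\mathcal{J}| = n-k$, yielding the value $(tB_{dr_t}) \cdot (u/(tB_{dr_t})^d)^{2/(n+k)}$.

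Setting $m = n+k$ and $a = u/(tB_{dr_t})^d$, it then remains to minimize $(tB_{dr_t}) \cdot a^{2/m}$ over $m \in \{2, \ldots, 2d\}$; every such $m$ is realized by an admissible non-HS triple, for instance by taking $n = \lceil m/2 \rceil$ and $k = \lfloor m/2 \rfloor$ with the trivial partition of $\{k+1,\ldots,n\}$. The function $m \mapsto a^{2/m}$ is monotone with direction dictated by whether $a \gtrless 1$, so its minimum is attained at one of the two endpoints: at $m = 2d$ (so $n=k=d$) it equals $u^{1/d}$, and at $m = 2$ (so $n=k=1$) it equals $u/(tB_{dr_t})^{d-1}$. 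A brief case split on the sign of $a-1$ confirms that in either regime $\min_m (tB_{dr_t}) a^{2/m} = \min(u/(tB_{dr_t})^{d-1}, u^{1/d})$, which together with the Hilbert--Schmidt contribution produces \eqref{eq:subgraph-counts-bound}. The main care point is the trivialization argument in the preceding paragraph, which must be carried out uniformly across all triples even though the associated thresholds differ; once this is secured, the reduction to the finest partition and the two-endpoint optimization are immediate.
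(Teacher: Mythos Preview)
Your proposal is correct and follows essentially the same argument as the paper: apply Corollary~\ref{cor:U-stat-tail}(ii), separate the Hilbert--Schmidt triples to recover the variance term, bound the remaining norms via Proposition~\ref{prop:subgraph-norms} to obtain the expressions~\eqref{eq:list-of-terms}, trivialize when any term is small, reduce to the finest partition $|\mathcal{J}|=n-k$, and then observe that the resulting one-parameter family is minimized at one of the endpoints $n+k=2$ or $n+k=2d$. The one point you flag as needing care---that non-triviality of~\eqref{eq:subgraph-counts-bound} forces \emph{every} base $u/(tB_{dr_t})^{d-(n+k)/2}$ to be large---is indeed the only subtlety, and it follows by splitting on $tB_{dr_t}\gtrless 1$ and using respectively $u/(tB_{dr_t})^{d-1}\ge 1$ or $u\ge 1$; the paper glosses over this in the same way.
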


In order to compare our results with recent inequalities from \cite{schulte2023moderate,bonnet2024concentration}, we will now specialize  to the case when $\lambda = \Vol$ is the Lebesgue measure on $\R^N$, restricted to a convex body $W$, and $\rho$ is the standard Euclidean distance. Since our results and the results from each of these papers are expressed in different ways and involve different sets of parameters, a general comparison may be difficult and tedious. For this reason we will focus on the case of fixed $W$ and will allow our constants to depend on this set. Both in our approach and in \cite{schulte2023moderate,bonnet2024concentration} the dependence on geometric characteristics of W, such that the volume, inradius, etc., can be made explicit, but it would make the comparison much more delicate. Our goal here is not to pursue the greatest generality but to argue that in some ranges of $u, t, r_t$ our inequalities are better than or comparable to those in the literature, while in some other cases (when $u$ is large) they are weaker and this phenomenon can be illustrated already within the setting of fixed $W$. Let us also remark that the authors of \cite{bonnet2024concentration} in addition to the Euclidean case consider also general constant curvature spaces. We do not pursue this direction.

In our special case for $r < R_W$, where $R_W$ is a constant depending on $W$, we have
\begin{displaymath}
  B_{r} = \omega_N r_t^N,
\end{displaymath}
where $\omega_N$ is the measure of the unit Euclidean ball in $\R^N$. Hence,
an application of Proposition \ref{prop:subgraph-tail} gives
\begin{align}\label{eq:subgraphs-Euclidean}
  \p(|S_t - \E S_t| \ge u) \le 2\exp\Big(-c_{d}\min\Big(\frac{u^2}{\Var(S_t)},\frac{u}{(td^N \omega_N r_t^N)^{d-1}},u^{1/d}\Big)\Big).
\end{align}
Moreover, using the upper bound \eqref{eq:subgraph-variance-upper-bound} together with \eqref{eq:U-stati-variance} and an elementary estimation from below one shows that there exist positive constants, $c_W, C_{d,N}$ such that for $r_t < R_W$
\begin{multline}\label{eq:variance-subgraph}
  c_W\Big(t^{2d-1} (\omega_N r_t^N)^{2d-2} + t^d (\omega_N r_t^N)^{d-1}\Big)  \\
  \le \Var(S_T) \le C_{d,N} \Vol(W)\Big(t^{2d-1} (\omega_N r_t^N)^{2d-2} + t^d (\omega_N r_t^N)^{d-1}\Big).
\end{multline}
We note that the upper estimate is true for any $r_t$, but clearly is suboptimal for the uninteresting regime when $r_t$ is larger than the diameter of $W$. The constants $c_W$ and $R_W$ may be easily estimated from below in terms of the inradius of $W$ (see \cite{bonnet2024concentration} for a similar derivation). For simplicity we will not keep track of this dependence.

We can see that for fixed $W$ if $t\omega_N r_t^N \le 1$ the variance is of order $t^d (\omega_N r_t^N)^{d-1}$, which is the same as the order of $\E S_t$. In the complementary case, the variance is of the order $t^{2d-1} (\omega_N r_t^N)^{2d-2}$

Let us first compare \eqref{eq:subgraphs-Euclidean} with the inequality by Shulte and Th\"ale \cite[Corollary 4.2]{schulte2023moderate}. To do this, following the literature (see \cite{MR1986198}) we will distinguish several regimes: the \emph{sparse regime} when $t\omega_N r_t^N \to 0$, and the \emph{thermodynamic} and \emph{dense} regimes when $t\omega_N r_t^N$ tends to a constant or diverges to infinity, respectively.

In the sparse regime it is easy to see that one can drop the middle term under the minimum in \eqref{eq:subgraphs-Euclidean} and, for $t$ such that $td^N \omega_N r_t^N \le 1$, obtain
\begin{multline}\label{eq:sparse}
  \p(|S_t - \E S_t| \ge u) \le 2\exp\Big(-c_d\min\Big(\frac{u^2}{\Var(S_t)},u^{1/d}\Big)\Big) \\
  \le 2\exp\Big(-c_{W,d}\min\Big(\frac{u^2}{t^d(\omega_n r_t^N)^{d-1}},u^{1/d}\Big)\Big).
\end{multline}

In this case the result by Shulte and Th\"ale is similar to the above estimate. The constant in front of the variance in \cite{schulte2023moderate} is explicit, while in front of $u^{1/d}$ in the first inequality there is a constant depending also on geometric characteristics of $W$. If we additionally assume that $\E S_t \to \infty$ (i.e., $t^d (\omega_N r_t^N)^{d-1}\to \infty$) from both \eqref{eq:sparse} and its counterpart in  \cite{schulte2023moderate} we obtain a subgaussian bound $\p(|S_t - \E S_t| \ge u\sqrt{\Var(S_t)})\le 2e^{-c_W u^2}$ in a window $u \in [0,u_t]$ with $u_t \to \infty$.

In the thermodynamic and dense regimes, by allowing the constants to depend on $W$, one again may drop the middle term under the minimum in \eqref{eq:subgraphs-Euclidean} (using the assumption $r_t < R_W$ in the case $td^N \omega_N r_t^N \ge 1$), and obtain for $t\omega_Nr_t^N \ge \varepsilon > 0$,
\begin{multline}\label{eq:dense}
  \p(|S_t - \E S_t| \ge u) \le 2\exp\Big(-c_{W,d}\min\Big(\frac{u^2}{\Var(S_t)},u^{1/d}\Big)\Big) \\
  \le 2\exp\Big(-c_{W,d,\varepsilon}\min\Big(\frac{u^2}{t^{2d-1}(\omega_N r_t^N)^{2d-2}},u^{1/d}\Big)\Big).
\end{multline}
In this case the inequality from \cite{schulte2023moderate} gives the same subgaussian term (again with a better constant) and instead of $u^{1/d}$ a term of the form $c_{W,d,\varepsilon} (u/(t\omega_N r_t^N)^{d-1})^{1/d}$, which in the dense regime is of worse order than in our estimate. Again, both results give a subgaussian bound on $\p(|S_t - \E S_t| \ge u\sqrt{\Var(S_t)})$ in a window of parameters $u$, which as $t \to \infty$ grows to the whole positive half-line. Our estimate gives a better growth of this window.

A comparison of our estimates with the results by Bonnet and Gusakova is more involved as they use an additional parameter $s \in [0,1]$ over which one can optimize. More precisely, \cite[Corollary 6.6]{bonnet2024concentration} yields that if  $r_t < R_{W,H}$, and $t \ge c_{W,d}(\omega_N r_t^N)^{s/d-1}$, then
\begin{align}\label{eq:BG}
  \p(|S_t - \E S_t| \ge u) \le \exp\Big(- \alpha(t)\min(A(t,u),B(t,u))\Big),
\end{align}
where
\begin{align*}
  \alpha(t) & = c_{d,W}\min(1, (\omega_N r_t^N))^{s-s/d},\\
  A(t,u) &= \frac{u^2}{t^{2d-1}(\omega_N r_t^N)^{2d - 2 - s + s/d}},\\
  B(t,u) &= \frac{u^{1/d}}{(\omega_N r_t^N)^{(s-1)/(2d)}}\Big(1+ \log_+\Big(\frac{c_{W,d}u}{t^d (\omega_N r_t^N)^{d- (s+1)/2}}\Big)\Big).
\end{align*}

If $r_t = r < R_{W,H}$ is independent of $t$, then basically any choice of $s$ leads to a bound of the form
\begin{displaymath}
  \p(|S_t - \E S_t| \ge u) \le 2\exp\Big(- c_{d,W,r} \min\Big(\frac{u^2}{\Var(S_t)},u^{1/d}\Big(1 + \log_+(c_{r}ut^{-d})\Big)\Big)\Big),
\end{displaymath}
for $t > C_{W,d,r}$, which for large $u$ improves on our estimates as it implies a bound on the tail which agrees with the behaviour of a product of independent Poisson variables.
If $r_t \to 0$, which may happen in all three regimes, the situation is more complicated. The bound one obtains from \eqref{eq:BG} is
\begin{align}\label{eq:dense-BG}
  \p(|S_t - \E S_t| \ge u)
  &\le 2\exp\Big(- c_{d,W}\min(\widetilde{A}(t,u),\widetilde{B}(t,u))\Big),
\end{align}
where
\begin{align*}
  \widetilde{A}(t,u) & = \frac{u^2}{t^{2d-1}(\omega_N r_t^N)^{2d-2 + 2s(1/d-1)}},\nonumber\\
  \widetilde{B}(t,u) & = (\omega_N r_t^N)^{s + \frac{1- 3s}{2d}}u^{1/d}\Big(1 + \log_+\Big(\frac{c_{W,d} u}{t^d (\omega_N r_t^N)^{d- (s+1)/2}}\Big)\Big).\nonumber
\end{align*}
Recall that the above bound holds for $t \ge c_{W,d}(\omega_N r_t^N)^{s/d-1}$.

Since $\omega_N r_t^N$ is eventually smaller than 1, and $d \ge 2$, for large $t$ the estimate becomes best for $s=0$.

Recalling \eqref{eq:variance-subgraph} one can see that in the dense regime we recover the variance as the subgaussian coefficient, similarly as in our inequality \eqref{eq:dense}, however the Poissonian term is multiplied by $(\omega_N r_t^N)^{1/2d} \to 0$. For $u$ which is large with respect to $t$, \eqref{eq:dense-BG} will still perform better than \eqref{eq:dense}, but for \emph{moderate} $u$ \eqref{eq:dense} may be better. Moreover, the subgaussian estimate on $\p(|S_t - \E S_t| \ge u\sqrt{\Var(S_t)})$ provided by \eqref{eq:dense} will hold in a larger window.

In the sparse regime one is not allowed to take $s = 0$ due to the restriction $t \ge c_{W,d}(\omega_N r_t^N)^{s/d-1}$. The variance of $S_t$ is of the order $t^d(\omega_Nr_t^N)^{d-1}$, while for any admissible $s$ the subgaussian coefficient in \eqref{eq:dense-BG}
equals
\begin{multline*}
t^{2d-1}(\omega_N r_t^N)^{2d-2 + 2s(1/d-1)} = t^d(\omega_Nr_t^N)^{d-1} (t (\omega_N r_t^N)^{1 - 2s/d})^{d-1} \\
\ge t^d(\omega_Nr_t^N)^{d-1} c_{W,d}^{d-1} (\omega_N r_t^N)^{-s(d-1)/d}.
\end{multline*}
The last factor on the right-hand side above diverges to infinity, so the estimate \eqref{eq:dense-BG}, contrary to \eqref{eq:sparse} does not recover the subgaussian estimate on $\p(|S_t - \E S_t| \ge u\sqrt{\Var(S_t)})$. On the other hand, one can always choose $s$ sufficiently close to 1, for which $\eqref{eq:dense-BG}$ will be applicable and then for $u$ sufficiently large with respect to $t$ one obtains a better estimate than given by \eqref{eq:sparse}.

Let us finally comment on the earlier estimates by Bachmann and Reitzner \cite{MR3849811}. They are formulated for general  measures on $\R^N$. The estimates on the upper tail $\p(S_t \ge \E S_t + u)$ instead of the variance use the median or mean of $S_t$. They also provide a subgaussian bound for small values of $u$ and a bound of the form $e^{-cu^{1/d}}$ for larger $u$. In the example discussed above the subgaussian coefficient does not match the variance of $S_t$, in general it may depend on the intensity measure $\lambda$. On the other hand, Bachmann and Reitzner provide a good estimate on the deviation of $S_t$ below the mean, of the form
\begin{displaymath}
  \p(S_t \le \E S_t - u) \le \exp\Big(-\frac{u^2}{2d\Var(S_t)}\Big).
\end{displaymath}
Such an estimate could  not be obtained by our methods, since we rely on decomposition of centered $U$-statistics into mean-zero random variables, coming from the chaos expansion and estimate deviations of absolute values of the summands. The above subgaussian bound on the lower tail explores the fact that $S_t$ is a sum of positive random variables. The phenomenon, when the lower tail of a positive random variable is lighter than the upper one is quite common in the theory of concentration of measure, starting with the classical example of sums of independent positive random variables.

Summarizing, our estimates on subgraph counts complement those existing in the literature. The problem of finding optimal inequalities in various ranges of parameters is still open. We remark that the corresponding problem in the case of Erd\H{o}s-R\'enyi random graphs attracted a lot of attention in the last two decades, leading ultimately to the formulation of nonlinear large deviation principles by Chatterjee and Dembo \cite{MR3519474}.

To finish let us point out that all the upper estimates above hold also in the case of counting induced subgraphs, since the functions in the chaos decomposition are bounded from
above by their counterpart in the included case.

\subsection{Length power functionals}\label{sec:length-power}
The subgraph count statistics considered in the previous section include as a special case the number of edges in a geometric random graph. We will now consider a family of closely related random variables known as length power functionals, which have been investigated, e.g., in \cite{MR3641808,MR3485348,bonnet2024concentration}. Following \cite{MR3485348}, we will consider a generalization of the random geometric graph, in which the range of edges may vary with the spatial location.
We will assume that $\XX = \R^N$ is equipped with Euclidean norm $| \cdot |$ and a locally finite diffuse intensity measure $\lambda$ (in particular $\widehat{\eta}$ is simple). Let $r:\R^N \to (0,\infty)$ be a bounded function. For $t > 0$ we will consider a random graph $G_t$ whose vertices are points of a Poisson process $\eta_t$. Two points  $x,y$ of $\eta_t$ are joint by an edge in $G_t$ if and only if the closed Euclidean balls  $B(x,r(x))$ and $B(y,r(y))$ have nonempty intersection. If $r(x) = r/2$ for a number $r > 0$, we obtain the Euclidean case of the Gilbert graph discussed in the previous section, but one can consider other functions $r\colon \R^N \to \infty$, e.g., $r(x) = \big( |x| + 1\big)^{-\gamma}; \gamma > 0$ -- the example investigated in \cite{MR3485348}. We will discuss functions $r$ which do not change too rapidly, i.e., satisfying the condition
\begin{align}\label{eq:A-beta}
\sup\{ r(x+v) : |v| \le 2\|r\|_\infty\} \le (\beta-1) r(x)
\end{align}
for some constant $\beta \ge 2$ and all $x \in \R^N$.
We remark that this assumption is satisfied in particular in the examples mentioned above.

For $\alpha \ge 0$ define now the function $g = g_{r,\alpha} \colon \R^N \times \R^N \to \mathbb R$ by
\begin{equation} g(x,y) = |x-y|^\alpha \ind{|x-y| \le r(x) + r(y)}
\end{equation}
and consider the $U$-statistic
\begin{displaymath}
  U_t(g) = \frac{1}{2}\sum_{(x,y) \in (\eta_t^2)_{\neq}} g(x,y).
\end{displaymath}

Note that the case $\alpha = 0$ corresponds to the edge count, while the case $\alpha = 1$ describes the total length of edges.

Our main results imply the following Law of the Iterated Logarithm and an exponential inequality, the proofs of which are presented in Section \ref{sec:length-power-proofs}.

\begin{prop}\label{prop:power-length-LIL} Assume that $r\colon \R^N\to (0,\infty)$  is bounded, satisfies the condition \eqref{eq:A-beta} and that the function $x\mapsto r^{\alpha}(x)\lambda(B(x,\beta r(x)))$ belongs to $L_1(\lambda)\cap L_2(\lambda)$. Then, with probability one,
\begin{displaymath}
\liminf_{t \to \infty}\  \frac{U_t(g) - \E U_t(g)}{t^{\frac{3}{2}} \sqrt{2 \log \log t}} = -\sqrt{ \int_{\R^N} \Big( \int_{\R^N}g(x,y)\lambda(dx) \Big)^2 \lambda(dy)}
\end{displaymath}
and
\begin{displaymath}
\limsup_{t \to \infty}\  \frac{U_t(g) - \E U_t(g)}{t^{\frac{3}{2}} \sqrt{2 \log \log t}} = \sqrt{ \int_{\R^N} \Big( \int_{\R^N}g(x,y)\lambda(dx) \Big)^2 \lambda(dy)}.
\end{displaymath}
\end{prop}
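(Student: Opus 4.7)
The plan is to apply Corollary \ref{cor:Ustat-LIL} directly to the symmetric kernel $g(x,y) = |x-y|^\alpha \ind{|x-y| \le r(x)+r(y)}$ with $d=2$. The only bookkeeping required is the factor $\tfrac12$ relating the $U$-statistic in the statement, $U_t(g) = \tfrac12 \sum_{(x,y)\in(\eta_t^2)_{\neq}} g(x,y)$, to the unnormalized sum $\widetilde U_t(g) = 2U_t(g)$ employed in the corollary, and a verification of the required integrability hypotheses.

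To verify the hypotheses I would use \eqref{eq:A-beta}. Whenever $|x-y| \le r(x)+r(y)$, since $r(y) \le \|r\|_\infty$ and $r(x)\le \|r\|_\infty$, we have $|y-x| \le 2\|r\|_\infty$, so \eqref{eq:A-beta} applies and yields $r(y) \le (\beta-1)r(x)$, hence $r(x)+r(y) \le \beta r(x)$. This gives the pointwise bound
\begin{displaymath}
g(x,y) \le (\beta r(x))^\alpha \ind{|x-y| \le \beta r(x)},
\end{displaymath}
so that
\begin{displaymath}
g_1(x) = \int_{\R^N} g(x,y)\, d\lambda(y) \le \beta^\alpha r(x)^\alpha \lambda(B(x,\beta r(x))),\qquad \int_{\R^N} g(x,y)^2\, d\lambda(y) \le (\beta r(x))^{2\alpha}\lambda(B(x,\beta r(x))).
\end{displaymath}
Combined with the $L_1(\lambda)$ assumption on $x\mapsto r(x)^\alpha \lambda(B(x,\beta r(x)))$ and boundedness of $r$ (which gives $r^{2\alpha}\le\|r\|_\infty^\alpha r^\alpha$), we obtain $g \in L_1^s(\lambda^{\otimes 2}) \cap L_2^s(\lambda^{\otimes 2})$, while the $L_2(\lambda)$ assumption gives $g_1 \in L_2(\lambda)$. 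Theorem \ref{thm:Reitzner-Schulte} then implies that $\E U(g)^2 < \infty$ and that the chaos expansion has kernels in $L_1^s \cap L_2^s$.

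Since $g\ge 0$ and $g\not\equiv 0$, we have $g_1 \ge 0$ with $g_1\not\equiv 0$, so the parameter $m$ in Corollary \ref{cor:Ustat-LIL} equals $1$ and the normalization is $t^{2-1/2}(2\loglog t)^{1/2}=t^{3/2}\sqrt{2\loglog t}$. The corollary then yields that the cluster set of $(\widetilde U_t(g)-\E \widetilde U_t(g))/(t^{3/2}\sqrt{2\loglog t})$ equals $\binom{2}{1}\{\int g_1\varphi\, d\lambda : \|\varphi\|_2\le 1\}$. Dividing through by $2$ to pass from $\widetilde U_t$ to $U_t$, the cluster set becomes $\{\int_{\R^N} g_1(x)\varphi(x) d\lambda(x) : \|\varphi\|_2\le 1\}$, which by Cauchy--Schwarz (with equality at $\varphi = \pm g_1/\|g_1\|_2$) collapses to the interval $[-\|g_1\|_2,\|g_1\|_2]$. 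Noting that $\|g_1\|_2^2 = \int_{\R^N}\bigl(\int_{\R^N}g(x,y) d\lambda(x)\bigr)^2 d\lambda(y)$ by symmetry of $g$, this produces exactly the stated values for the liminf and limsup. No serious obstacle is anticipated: the argument is a direct specialization of the general LIL, and the sole technical role of the slow-variation condition \eqref{eq:A-beta} is to transfer the $L^p$ hypothesis on $r$ to bounds on $g$ and $g_1$.
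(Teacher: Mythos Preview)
Your proposal is correct and follows essentially the same approach as the paper: use condition \eqref{eq:A-beta} to obtain the pointwise bound $g(x,y)\le (\beta r(x))^\alpha\ind{|x-y|\le\beta r(x)}$, deduce $g\in L_1^s\cap L_2^s$ and $g_1\in L_2$, and then apply Corollary \ref{cor:Ustat-LIL} with $m=1$. The paper carries out the same estimates (together with several further norm bounds needed only for the concentration inequality of Proposition \ref{prop:PowerLengthIneq}) and then invokes the same corollary; your handling of the factor $\tfrac12$ is correct and your identification of the cluster set as $[-\|g_1\|_2,\|g_1\|_2]$ matches the stated conclusion.
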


\begin{prop}\label{prop:PowerLengthIneq} For $p,\gamma \ge 0$ define
\begin{displaymath}
 A_{\gamma,p} = \int_{\R^N} r^\gamma(x) \lambda^p(B(x,\beta r(x))) \lambda(dx),\; B_{\gamma,p} = \esssup_{x \in \R^N} \Big(r^\gamma(x) \lambda^p(B(x,\beta r(x)))\Big).
\end{displaymath}
Under the assumptions of Proposition \ref{prop:power-length-LIL}, for any $t>0$ and $u>0$,

\begin{multline*} \p(|U_t(g)-\E U_t(g)| > u)\\
 \le 2\exp\Big( - c \min \Big ( \frac{u^2}{\Var (U_t(g))}, \frac{u}{t \beta^\alpha B_{\alpha,1}}, \frac{u^{\frac{2}{3}}}{t^{\frac{1}{3}} \beta^{\frac{2}{3}\alpha} B_{\frac{2}{3}\alpha,\frac{1}{3}} },\frac{u^{\frac{1}{2}}}{\beta^{\frac{\alpha}{2}} B_{\frac{\alpha}{2},0}}   \Big ) \Big).
\end{multline*}
Moreover,
\begin{displaymath}
  \Var(U_t(g)) \le C \beta^{2\alpha}(t^3 A_{2\alpha,2} + t^2 A_{2\alpha,1}).
\end{displaymath}
\end{prop}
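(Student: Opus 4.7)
\medskip

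\noindent \textbf{Proof plan.} The strategy is to apply Corollary \ref{cor:U-stat-tail} with $d=2$ and then to bound each of the partition norms $\bigl\|\|g_n\|_\mathcal{J}\bigr\|_\infty$ appearing in the exponent using the growth condition \eqref{eq:A-beta}. With $g_2=g$ and $g_1(x)=\int g(x,y)d\lambda(y)$, the minimum in Corollary \ref{cor:U-stat-tail} ranges over six tuples $(n,k,\mathcal{J})$. Four of them give rise to the four terms listed in the proposition (once the relevant norms are estimated), and the remaining two will be shown to be implied by (i.e.\ to dominate) the listed terms.

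The key geometric observation is the following: whenever $g(x,y)\neq 0$, we have $|x-y|\le r(x)+r(y)$ with $|y-x|\le 2\|r\|_\infty$, so applying \eqref{eq:A-beta} with $v=y-x$ gives $r(y)\le(\beta-1)r(x)$ and hence $|x-y|\le\beta r(x)$ and $g(x,y)\le\beta^\alpha r^\alpha(x)\ind{|y-x|\le\beta r(x)}$. Plugging this in, I estimate the norms as follows. First, $\|g_1\|_\infty\le\esssup_x\beta^\alpha r^\alpha(x)\lambda(B(x,\beta r(x)))=\beta^\alpha B_{\alpha,1}$, which yields the $u/(t\beta^\alpha B_{\alpha,1})$ term via the tuple $(n,k,\mathcal J)=(1,1,\emptyset)$. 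Next, $\|g(x,\cdot)\|_2^2\le\beta^{2\alpha}r^{2\alpha}(x)\lambda(B(x,\beta r(x)))$, so $\bigl\|\|g\|_{\{2\}}\bigr\|_\infty\le\beta^\alpha B_{\alpha,1/2}^{1/2}\cdot(\text{rewrite})$; more cleanly, $\bigl\|\|g\|_{\{2\}}\bigr\|_\infty^{2/3}\le\beta^{2\alpha/3}B_{2\alpha/3,1/3}$, producing exactly the third term from the tuple $(2,1,\{\{2\}\})$ (whose exponent is $2/3$). Finally, $\|g\|_\infty\le\beta^\alpha\|r\|_\infty^\alpha=\beta^\alpha B_{\alpha/2,0}^2$, giving the $u^{1/2}/(\beta^{\alpha/2}B_{\alpha/2,0})$ term from $(2,2,\emptyset)$. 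The two remaining tuples $(1,0,\{\{1\}\})$ and $(2,0,\{\{1,2\}\})$ involve only the $L_2$-norms $\|g_1\|_2$ and $\|g\|_2$, and together with \eqref{eq:U-stati-variance} they combine into the subgaussian term $u^2/\mathrm{Var}(U_t(g))$.

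It remains to check that the tuple $(2,0,\{\{1\},\{2\}\})$, which would contribute a term $u/(t\|g\|_{\{1\}\{2\}})$ not appearing in the statement, is redundant. For this I invoke Schur's test: since $g\ge 0$ and $g$ is symmetric,
\[
\|g\|_{\{1\}\{2\}}^2\le\bigl(\esssup_x\|g(x,\cdot)\|_1\bigr)\bigl(\esssup_y\|g(\cdot,y)\|_1\bigr)=\|g_1\|_\infty^2\le\beta^{2\alpha}B_{\alpha,1}^2,
\]
so $u/(t\|g\|_{\{1\}\{2\}})\ge u/(t\beta^\alpha B_{\alpha,1})$ and this term is already dominated. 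The variance bound follows from \eqref{eq:U-stati-variance}, namely $\mathrm{Var}(U_t(g))=t^3\|g_1\|_2^2+2t^2\|g\|_2^2$, combined with $\|g_1\|_2^2\le\beta^{2\alpha}A_{2\alpha,2}$ (integrate the pointwise bound on $g_1$) and $\|g\|_2^2\le\beta^{2\alpha}A_{2\alpha,1}$ (integrate first in $y$ using the support bound).

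The main obstacle is essentially bookkeeping: correctly enumerating the six partition norms for $d=2$, matching the resulting exponents $2,1,2/3,1/2$ to the parameters $B_{\alpha,1},B_{2\alpha/3,1/3},B_{\alpha/2,0}$ with the right powers of $\beta$, and verifying that the one unlisted tuple is absorbed. Once the geometric consequence $|x-y|\le\beta r(x)$ of \eqref{eq:A-beta} is in hand, all the pointwise estimates on $g$ and $g_1$ are straightforward, and the Schur reduction takes care of the only genuinely nontrivial norm.
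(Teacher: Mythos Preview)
Your proposal is correct and follows essentially the same approach as the paper: apply Corollary~\ref{cor:U-stat-tail} with $d=2$, use the geometric consequence $|x-y|\le\beta r(x)$ of \eqref{eq:A-beta} to bound each of the six partition norms, and collapse the two $L_2$-norm terms into the variance via \eqref{eq:U-stati-variance}. The only minor difference is your treatment of $\|g\|_{\{1\}\{2\}}$: you invoke Schur's test to get $\|g\|_{\{1\}\{2\}}\le\|g_1\|_\infty\le\beta^\alpha B_{\alpha,1}$, whereas the paper obtains the same bound by a direct Cauchy--Schwarz splitting of the kernel as $\beta^\alpha r^{\alpha/2}(x)r^{\alpha/2}(y)\ind{|x-y|\le\beta r(x)}\ind{|x-y|\le\beta r(y)}$; both arguments are equally short and yield identical estimates.
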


In order to compare this result with inequalities from \cite{schulte2023moderate,bonnet2024concentration} we may again specialize to the case when $\lambda = \Vol$ is the Lebesgue measure restricted to a convex body $W \subset \R^N$ and  $r = r_t$ does not depend on $x$ but may vary with $t$. In this special case, $B_{\gamma,p} \le \omega_N^p r_t^{\gamma + Np}$ and $A_{\gamma,p} \le \Vol(W)\omega_N^pr_t^{\gamma + Np}$, where $\omega_N$ is the volume of the unit ball in $\R^N$. Using similar arguments as in the case of subgraph counts, we may eliminate from the exponential inequality the part corresponding to $u^{2/3}$ and obtain for $t,u>0$,
\begin{displaymath} \p(|U_t(g)-\E U_t(g)| > u)
 \le 2\exp \Big(-c\Big( \frac{u^2}{\Var(U_t(g))}, \frac{u}{t \omega_N r_t^{\alpha+N}}, \frac{u^{1/2}}{r_t^{\alpha/2}}\Big)\Big)
 \end{displaymath}
and $\Var(U_t(g)) \le C\Vol(W)(t^3 \omega_N^2 r_t^{2(\alpha + N)} + t^2 \omega_N r_t^{2\alpha + N})$.

Similarly as with inequalities for subgraph counts,depending on the regime of $r_t,t,u$, the above inequality may give better or worse estimates than those from \cite{bonnet2024concentration}. On the one hand, as with all our results, we do not capture the Poisson behaviour for large $u$. On the other hand, we obtain estimates with better subgaussian behaviour, especially in the sparse regime. Since the precise comparison of the estimates is quite tedious and parallels the one for subgraph counts we do not present it here and leave it for the interested reader.

\subsection{Quadratic functional of the Ornstein--Uhlenbeck L\'evy process}\label{sec:OUL}
We will now consider an example discussed in \cite{MR2537812,MR2642882,schulte2023moderate}.
An Ornstein--Uhlenbeck L\'evy process $(U_s)_{s\ge 0}$ is a process of the form
\begin{displaymath}
U_s = \sqrt{2\rho}\int_{(-\infty,s]\times \R} ue^{-\rho(s-x)}d(\eta - \mu)(x,u),
\end{displaymath}
where $\eta$ is a Poisson process on $\R^2$ with intensity $\mu = \mathcal{L} \otimes \nu$, with $\mathcal{L}$ being the Lebesgue measure on $\R$ and $\nu$ -- a measure on $\R$, satisfying the normalizing condition $\int_\R u^2 d\nu(u) = 1$. Such processes have applications in survival analysis, network modelling and finance, see the references in \cite{MR2537812}.

We will be interested in the functional
\begin{displaymath}
  Q_T = \int_0^T U_s^2 ds.
\end{displaymath}

It was shown in \cite{MR2537812} that as $T \to \infty$, $\frac{Q_T - T}{\sqrt{T}}$ converges weakly to a centered Gaussian measure. The authors of \cite{MR2642882} provided bounds on the speed of convergence in the Wasserstein distance, while in \cite{schulte2023moderate} these results were complemented with moderate deviations, a Bernstein type concentration inequality and Cram\'er type quantitative bounds on the speed of convergence to the limiting measure.

We will additionally assume that $\nu$ has compact support and set $A = \sup\{|u|\colon u \in {\rm supp \;} \nu\}$. Let also $c_\nu^2 = \int_\R u^4d\nu(u)$.

By using Theorem \ref{thm:tails-and-moments-main} one may obtain a version of the concentration inequality from \cite{schulte2023moderate} with an increased range of applicability. More precisely, we have the following proposition, the proof of which is deferred to Section \ref{sec:OUL-proofs}.

\begin{prop}\label{prop:OUL}
For any $T > 0$ and $u > 0$,
\begin{align}\label{eq:prop-3.6}
  \p(|Q_T - E Q_T|\ge u) \le 2\exp\Big(-c\min\Big(\frac{u^2}{\Var(Q_T)},\frac{u^{2/3}}{A^{2/3}\rho^{-1/3}},\frac{u^{1/2}}{A}\Big)\Big).
\end{align}
Moreover, $\Var(Q_T) \le (c_\nu^2 + 2\rho^{-1})T$.
\end{prop}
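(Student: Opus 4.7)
The strategy is to decompose $Q_T-\E Q_T$ into a second and a first Poisson chaos, apply Corollary \ref{cor:simplified-estimate} to each summand, and combine via a union bound.

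Write $U_s=I^{(1)}(f_s,\eta)$ with $f_s(x,u)=\sqrt{2\rho}\,u\,e^{-\rho(s-x)}\ind{x\le s}$. The normalization $\int u^2\,d\nu(u)=1$ gives $\|f_s\|_{L^2(\mu)}^2=1$, so $\E Q_T=T$. The product formula for Poisson multiple Wiener--It\^{o} integrals then yields
\[
U_s^2=I^{(2)}(f_s\otimes f_s)+I^{(1)}(f_s^2)+1,
\]
and after integrating over $s\in[0,T]$ and applying a stochastic Fubini argument (justified by $L^2$ integrability), I obtain
\[
Q_T-\E Q_T=I^{(2)}(G)+I^{(1)}(g),
\]
where $G(y_1,y_2)=\int_0^T f_s(y_1)f_s(y_2)\,ds$ and $g(y)=\int_0^T f_s^2(y)\,ds$; both kernels are symmetric.

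The central analytic input is the computation
\[
\langle f_s,f_r\rangle_{L^2(\mu)}=2\rho\int_{-\infty}^{\min(s,r)}e^{-\rho(s+r)+2\rho x}\,dx=e^{-\rho|s-r|},\quad s,r\ge 0.
\]
Combined with Fubini this yields $\|G\|_2^2=\int_0^T\int_0^T e^{-2\rho|s-r|}\,ds\,dr\le T/\rho$, while a parallel calculation using $\int u^4\,d\nu=c_\nu^2$ gives $\|g\|_2^2\le c_\nu^2 T$. Boundedness $|u|\le A$ on $\operatorname{supp}\nu$ immediately yields $\|G\|_\infty,\|g\|_\infty\le A^2$. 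For the mixed norm, a Fubini expansion gives $\|G(y_1,\cdot)\|_2^2=\int_0^T\int_0^T f_s(y_1)f_r(y_1)e^{-\rho|s-r|}\,ds\,dr$, and the identity $e^{-\rho(s+r)}e^{-\rho|s-r|}=e^{-2\rho\max(s,r)}$, together with an elementary bound on $\int_0^\infty r e^{-2\rho r}\,dr$, yields $\esssup_{y_1}\|G(y_1,\cdot)\|_2^2\le A^2/\rho$.

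By orthogonality of distinct Poisson chaoses and the symmetry of the kernels,
\[
\Var(Q_T)=2\|G\|_2^2+\|g\|_2^2\le (c_\nu^2+2\rho^{-1})T,
\]
which proves the variance bound. For the tail estimate I apply Corollary \ref{cor:simplified-estimate} (with $T=1$, via the identification $\eta\stackrel{d}{=}\eta_1$) to $I^{(2)}(G)$ with $B_0^2\le T/\rho$, $B_1^2\le A^2/\rho$, $B_2\le A^2$, and to $I^{(1)}(g)$ with $B_0^2\le c_\nu^2 T$, $B_1\le A^2$, then split $u$ between the two summands. The subgaussian contributions of both chaoses combine into $u^2/\Var(Q_T)$; the $k=1$ and $k=2$ terms for $I^{(2)}(G)$ produce the exponents $u^{2/3}\rho^{1/3}/A^{2/3}$ and $u^{1/2}/A$; and the remaining Bernstein term $u/\|g\|_\infty=u/A^2$ coming from $I^{(1)}(g)$ is dominated by $u^{1/2}/A$ for $u\ge A^2$ and by the subgaussian exponent for $u\le A^2$ (up to an adjustment of the universal constant), leaving the stated three-exponent bound.

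The main obstacle is the mixed norm estimate $\esssup_{y_1}\|G(y_1,\cdot)\|_2^2\le A^2/\rho$: the factor $1/\rho$ (rather than $T/\rho$) is precisely what produces the sharp $\rho^{1/3}$ scaling in the middle exponent, and getting it requires carefully using both the indicator $\ind{x_1\le s}$ in $f_s$ and the cross-exponential $e^{-\rho|s-r|}$ to collapse the double integral to a single convergent integral independent of $T$. The remaining manipulations are routine applications of the Poisson product formula, Fubini, and chaos orthogonality.
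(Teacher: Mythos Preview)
Your norm computations are correct, and your chaos decomposition agrees with the paper's. However, the appeal to Corollary \ref{cor:simplified-estimate} for the second chaos $I^{(2)}(G)$ does \emph{not} produce a subgaussian term. For $d=2$ the exponents in that corollary are $\tfrac{2}{d+k}$ for $k=0,1,2$, i.e.\ $1,\ \tfrac{2}{3},\ \tfrac{1}{2}$; the $k=0$ term yields only $u/B_0=u/\|G\|_2\sim u\sqrt{\rho/T}$, which is sub-exponential, not $u^2/\|G\|_2^2$. So your sentence ``the subgaussian contributions of both chaoses combine into $u^2/\Var(Q_T)$'' is false: only $I^{(1)}(g)$ gives a subgaussian piece through Corollary \ref{cor:simplified-estimate}. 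The extra term $u\sqrt{\rho/T}$ you pick up is not absorbable by the three exponents in \eqref{eq:prop-3.6}; for instance, with $\rho,A,c_\nu$ of order one and $u=T^{3/4}$, the target exponent is $u^{1/2}\sim T^{3/8}$ while your bound gives only $u/\sqrt{T}\sim T^{1/4}$.

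The fix is to use Theorem \ref{thm:tails-and-moments-main} instead. There the partition $\mathcal{J}=\{[2]\}$ (with $k=0$) gives the genuine subgaussian exponent $(u/\|G\|_2)^2$, while the finer partition $\mathcal{J}=\{\{1\},\{2\}\}$ contributes the linear exponent $u/\|G\|_{\{1\}\{2\}}$, with $\|G\|_{\{1\}\{2\}}$ the \emph{operator} norm of the integral operator with kernel $G$. The paper shows $\|G\|_{\{1\}\{2\}}\le C/\rho$, independent of $T$, by a Cauchy--Schwarz (Schur-test type) estimate on the exponential kernel. That is the missing ingredient: you computed $B_0=\|G\|_2$ and $B_1=\|\|G\|_{\{2\}}\|_\infty$ but not the operator norm $\|G\|_{\{1\}\{2\}}$, and Corollary \ref{cor:simplified-estimate} cannot bypass it because it collapses the $|\mathcal{J}|=1$ and $|\mathcal{J}|=2$ cases into the worst exponent.
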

Let us briefly comment on the differences between our estimate and \cite[Corollary 4.5]{schulte2023moderate}. The estimate therein, instead of our parameter $A$, uses a number $M\ge 1$, which is assumed to satisfy a bound $\int_\R |u|^m d\nu(m) \le M^m$ for all positive integers $m$. It is easy to see that such an $M$ exists if and only if $\nu$ is compactly supported and in this case $M \ge A$. The estimate from \cite{schulte2023moderate} works under an additional assumption that $T > 1/\rho$ and contains the subgaussian part and a part of the form $(u b_n)^{1/2}$, where $b_n$ depends on $T, c_\nu, M$ and $\rho$ through a quite involved formula. One can show that $b_n$ is bounded from above by $\frac{c}{M^8 (\max(1,2/\rho))^3}$, for some explicit absolute constant $c$, which demonstrates that the dependence on the parameters $\rho$ and $A$ (being a lower bound on $M$) in our estimate is better. On the other hand, the multiplicative constants in \cite{schulte2023moderate} are explicit. To conclude the discussion, let us point out that for large $T$, both estimates give subgaussian bounds in a growing window of parameters $u$, agreeing (up to the value of constant) with the central limit theorem for $\frac{Q_T - T}{\sqrt{T}}$.

\subsection{Poisson flat process} Another process, which has been thoroughly studied in stochastic geometry, is the Poisson flat process, see, e.g., the monograph \cite{MR2455326} or articles \cite{MR2244437,MR3161465,MR3215537,bonnet2024concentration}. For positive integers $k< N$ let $G(N,k)$ be the Grassmann manifold of all $k$-dimensional linear subspaces of $\R^N$ and let $A(N,k)$ be the space of all $k$-dimensional affine subspaces of $\R^N$ (which we will call $k$-flats or simply flats). For a probability measure $Q$ on $G(N,k)$ let $\lambda$ be a measure on $A(N,k)$ given by
\begin{displaymath}
  \lambda(\cdot) = \int_{G(N,k)} \int_{E^\perp}\ind{x+E \in \cdot}d H_{N-k}(x)dQ(E),
\end{displaymath}
where $H_{N-k}$ is the $(N-k)$-dimensional Hausdorff measure. This defines an intensity of a Poisson process $\eta$ on $A(N,k)$, which is invariant under translations of flats. For $Q$ being the uniform measure on $G(N,k)$, $\eta$ is invariant also under rotations. We refer to \cite[Chapter 4.4]{MR2455326} for more details on Poisson flat processes.

Among the functionals of the Poisson flat process of interest in stochastic geometry are $U$-statistics of the form $U_t(g)$, where
\begin{displaymath}
  g(H_1,\ldots,H_d) = \psi(W\cap H_1\cap\cdots\cap H_d),
\end{displaymath}
for a convex body $W$ and a function $\psi$ on the space of all convex compact subsets of $\R^N$ (we note that one can show that the intersection $H_1\cap \cdots \cap H_d$ of distinct flats from the process $\eta_t$ is almost surely of dimension $N - d(N-k)$ or empty, see \cite[p. 130]{MR2455326} and \cite[Example 3]{MR3215537}). A typical example is $\psi(L) = V_i(L)^\beta$, where $V_i$ is the $i$-th intrinsic volume, $i=0,\ldots,N$ (see, e.g., \cite[Chapter 14.2]{MR2455326}) and $\beta > 0$. In particular, for $i=0$, $V_0$ is the Euler characteristic, equal to one for convex bodies (so $U_t(g)$ counts the number of intersections of $d$ distinct flats of $\eta_t$ in $W$), for $i= N-d(N-k)$ and $L$ of dimension $i$ we obtain the $N-d(N-k)$ dimensional Haussdorff measure of $L$ and for $i = N - d(N-k)-1$ one half of the $(i-1)$-dimensional Haussdorff measure of the boundary of $L$ in the $i$-dimensional affine space spanned by $L$.

Taking into account that $\lambda$ restricted to the set of $k$-flats intersecting $W$ is finite (so square integrability implies integrability), one can use Corollary \ref{cor:Ustat-LIL} to get the following result.

\begin{prop}\label{prop:k-flats-LIL}
In the above setting, assume that $\psi$ is nonnegative and
\begin{displaymath}
  \int_{A(N,k)^d} \psi(W\cap H_1\cap\cdots\cap H_d)^2 d\lambda^{\otimes d}(H_1,\ldots,H_d) < \infty.
\end{displaymath}
Set
\begin{displaymath}
\sigma^2 = d\int_{A(N,k)} \Big(\int_{A(N,k)^{d-1}} \psi(W\cap H_1\cap\cdots\cap H_d)d\lambda^{\otimes(d-1)}(H_1,\ldots,H_{d-1})\Big)^2 d\lambda(H_d).
\end{displaymath}
Then, with probability one,
\begin{displaymath}
  \limsup_{t\to \infty} \frac{U_t(g) - \E U_t(g)}{t^{d-1/2}\sqrt{2\log \log t}} = \sigma \textrm{ and } \liminf_{t\to \infty} \frac{U_t(g) - \E U_t(g)}{t^{d-1/2}\sqrt{2\log \log t}} = -\sigma.
\end{displaymath}
\end{prop}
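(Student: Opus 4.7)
The plan is a direct application of Corollary \ref{cor:Ustat-LIL} with $m = 1$, preceded by two short verifications. Let $K = \{H \in A(N,k) : H \cap W \neq \emptyset\}$; since $W$ is a convex body, the standard theory of Poisson flat processes gives $\lambda(K) < \infty$. Because $g$ vanishes outside $K^d$, the Cauchy--Schwarz inequality yields $\|g\|_1 \le \|g\|_2 \lambda(K)^{d/2} < \infty$, so $g \in L_1^s(A(N,k)^d)$. Applying Cauchy--Schwarz to the inner integral in \eqref{eq:U-stat-chaos-kernels} gives $\|g_n\|_2 \le \|g\|_2 \lambda(K)^{(d-n)/2} < \infty$ for every $n \in \{1,\ldots,d\}$, so \eqref{eq:U-stati-variance} yields $\E|U(g)|^2 < \infty$. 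Hence all hypotheses of Corollary \ref{cor:Ustat-LIL} are satisfied.

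To identify $m := \min\{n : g_n \not\equiv 0\}$, note that nonnegativity of $\psi$ forces $g_1 \ge 0$. If $g_1 \equiv 0$, then the quantity $\sigma$ in the proposition vanishes and both sides of the claim are zero, so there is nothing to prove. Otherwise $g_1$ is a nontrivial nonnegative function and $m = 1$. Corollary \ref{cor:Ustat-LIL} then produces the bounded LIL with normalization $t^{d-1/2}(2\log\log t)^{1/2}$, and identifies the cluster set as
\[
\mathcal{C} = d \cdot \Bigl\{ \int_{A(N,k)} g_1(H)\varphi(H)\, d\lambda(H) : \varphi \in L_2(\lambda),\ \|\varphi\|_2 \le 1 \Bigr\}.
\]

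Since $g_1 \in L_2(\lambda)$, the Cauchy--Schwarz inequality, saturated at $\varphi = \pm g_1/\|g_1\|_2$, shows that $\mathcal{C}$ equals the closed interval $[-d\|g_1\|_2,\, d\|g_1\|_2]$. The endpoints of $\mathcal{C}$ are attained in the cluster set, and therefore coincide with the $\limsup$ and the $\liminf$. Unfolding the definition of $g_1$ recovers the explicit value stated in the proposition.

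There is no substantive obstacle: the entire argument is a bookkeeping reduction to Corollary \ref{cor:Ustat-LIL}. The only points that warrant care are the finiteness of $\lambda(K)$, which is classical in stochastic geometry, and the observation that the nonnegativity of $\psi$ forces the first-order chaos to dominate -- precisely the phenomenon emphasized in the remark following Corollary \ref{cor:Ustat-LIL}.
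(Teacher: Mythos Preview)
Your argument is correct and matches the paper's approach exactly: the sentence preceding the proposition says precisely to use finiteness of $\lambda$ on the set of flats meeting $W$ to obtain integrability, and then to apply Corollary~\ref{cor:Ustat-LIL}. One caveat on your final sentence: the cluster-set computation you carry out correctly yields $\limsup = d\|g_1\|_2$, whereas the $\sigma$ defined in the statement satisfies $\sigma^2 = d\|g_1\|_2^2$, i.e.\ $\sigma = \sqrt{d}\,\|g_1\|_2$; this is a typo in the proposition (the prefactor $d$ in the definition of $\sigma^2$ should be $d^2$, consistent with the analogous subgraph-count proposition in Section~\ref{sec:subgraph-counts} and with the leading variance term $d^2 t^{2d-1}\|g_1\|_2^2$ from~\eqref{eq:U-stati-variance}), so your ``unfolding'' step should flag rather than gloss over the discrepancy.
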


This result complements central limit theorems obtained for $U_t(g)$ in \cite{MR2244437,MR3215537}. If $\psi$ corresponds to intrinsic volumes then the assumptions of the above proposition are satisfied (in fact $g$ is bounded, see \cite{MR3215537}).

We remark that in the rotationally invariant case with $\psi = V_i$ one can simplify the expression for $\sigma^2$, using Crofton's formula \cite[Theorem 5.1.1]{MR2455326}. It turns out that (up to a prefactor related to $i,k,N,d$), $\sigma^2$ is equal  to $\int_{A(N,k)} V_{i+(d-1)(N-k)}(W\cap H)^2 d \lambda(H)$ (see \cite{bonnet2024concentration} for details). Similarly, $\E U_t(g)$ is proportional to $t^d V_{i + d(N-k)}(W)$ .

As for the exponential inequality, even though one can in principle apply our results, getting estimates on the norms $\|\|g\|_\mathcal{J}\|_\infty$ seems rather involved. In the rotationally invariant case, for $\psi = V_i$ and $k = N-1$ an exponential inequality has been recently derived in \cite{bonnet2024concentration}. As with other results from this paper, described in more detail above, it provides a subgaussian estimate for small values of the parameters and an exponent of the form $-u^{1/d}\log(u)$ for large values of $u$. Since in this case, for fixed kernel $g$, the best we can hope to obtain from Corollary \ref{cor:U-stat-tail} is a subgaussian estimate for small values and an exponent $-u^{1/d}$ for large values of $u$, we do not try to obtain explicit estimates.

Let us also mention that in \cite{MR4635692,bonnet2024concentration} the authors consider $k$-flats processes in constant curvature spaces. A counterpart of Proposition \ref{prop:k-flats-LIL} is clearly available also in this setting.

\subsection{Local $U$-statistics of marked processes} In the examples described in previous sections, the functions $g$ generating the $U$-statistic and as a consequence integrands in stochastic integrals involved were nonnegative. In this case the norms $\|\cdot\|_\mathcal{J}$ decrease if one decreases the values of the function. In general this may not be the case. Moreover, asymptotically for nonnegative kernels as $t\to \infty$ the dominating part of the $U$-statistics comes from the first chaos and as a consequence the normalization in the LIL is  $t^{d-1/2}\sqrt{2t\log\log t}$. In this section we will present a few examples of $U$-statistics based on kernels of varying sign (focusing for simplicity on the case of $U$-statistics of order $2$) which may be of interest from the point of view of statistics or modelling of physical or social phenomena. We will not describe in detail any particular model, the goal is to just sketch potential applications of our results. We would also like to argue that they may be applied in the context of marked processes, which are of interest in applications. For this reason we will now consider the space $\XX \times \mathbb{M}$, where $\mathbb{M}$ is the space of \emph{marks}, which is endowed with a $\sigma$-field $\mathcal{M}$.

We will assume that the Poisson point process $\widehat{\eta}$ on $\XX\times [0,\infty)$ is proper, of the form $\widehat{\eta} = \sum_{n=1}^\kappa \delta_{(X_n,T_n)}$ , and that to each of its points $(X_i,T_i)$, $i \le \kappa$, one attaches a random mark $Y_i$ in such a way that conditionally on $\widehat{\eta}$, $Y_i$'s are independent and $Y_i$ is distributed according to the measure $K(X_i,\cdot)$, where $K\colon \mathcal{X} \times \mathcal{M} \to [0,1]$ is a kernel, i.e., for fixed $A \in \mathcal{M}$, $K(\cdot,A)$ is measurable and for fixed $x \in \XX$, $K(x,\cdot)$ is a probability measure on $\mathcal{M}$. Note that the distribution of the mark $Y_n$ does not depend on the time $T_n$, but only on the \emph{spatial} location $X_n$. The Marking Theorem  \cite[Theorem 5.6]{MR3791470} asserts that $\vartheta = \sum_{n=1}^\kappa \delta_{(X_n,Y_n,T_n)}$ is then a Poisson point process on $\XX \times \mathbb{M}\times [0,\infty)$ with intensity given by $\mu\otimes \mathcal{L}$, where $\mu(B) = \lambda \otimes K (B) := \int_\XX K(x,\{y\colon (x,y) \in B\})d\lambda(x)$. We can therefore apply our results to the new space $\XX\times \mathbb{M}$ and obtain the LIL or concentration inequalities. In many situations the kernel $K$ is independent of $X$, which simplifies the setting and corresponds to marks assigned independently also of the spatial location of the points.
Since for $g \colon (\XX \times\mathbb{M})^2 \to \R$ we have
\begin{displaymath}
\E U(g^{sym},\vartheta_t) = t^2 \int_{(\XX\times  \mathbb{M})^2} g(x_1,y_1,x_2,y_2)d\mu(x_1,y_1)d\mu(x_2,y_2),
\end{displaymath}
the normalized $U$-statistic $t^{-2}U_t(g)$ may be considered as an unbiased estimator of the quantity $\int_{(\XX\times  \mathbb{M})^2} g(x_1,y_1,x_2,y_2)d\mu(x_1,y_1)d\mu(x_2,y_2)$ in a model in which the data points are acquired according to the homogeneous in time, but not necessarily in space, Poisson point processes. Of course, this encompasses as the special case estimators of quantities of the type $\int_{\XX^2} f(x_1,x_2)\lambda(dx_1)\lambda(dx_2)$ for $f\colon \XX^2 \to \R$.

The LIL describes then the almost sure limiting behaviour of the estimator as $t\to \infty$ (which admittedly is of rather theoretical than practical interest) while the exponential inequalities give confidence intervals for finite $t$.

To give some simple examples, consider, e.g., the situation where $\XX$ is endowed with a metric $\rho$, and $\mathcal{M} = \{+1,-1\}$ and let $K(x,\cdot) = p_x\delta_1 + q_x\delta_{-1}$, where $q_x = 1 - p_x$. In this case the mark of a point may be considered a \emph{spin} or a \emph{belief/preference}.

Setting
\begin{displaymath}
  g(x_1,y_1,x_2,y_2) = \ind{\rho(x_1,x_2)\le r}y_1y_2,
\end{displaymath}
we see that $U_t(g)$ may be interpreted as a measure of agreement of beliefs between the vertices of the random geometric graph. If say $\XX$ is the sphere and $\lambda$ is the uniform measure, then the corresponding $U$-statistic after an appropriate normalization may be used to estimate the probability that two randomly chosen individuals at distance at most $r$, choosing their preferences independently and according to the measures assigned to their locations, agree.

The examples presented above are very simple, and they serve only illustrative purposes, we would however like to remark that there are many more involved statistical applications of Poisson $U$-statistics with kernels of varying sign, related for instance to cosmology. The corresponding kernels are obtained from needlet or wavelet expansions of various functionals of interest. One may hope that concentration inequalities will lead to non-asymptotic guarantees in corresponding statistical problems. The setting for such applications is too involved to describe it here, we refer the reader to \cite{MR3447000,MR3770838,MR3585404}. Other statistical applications of Poisson $U$-statistics, in particular to testing the intensity of Poisson processes on the line have been presented in \cite{MR2779402}.

Let us finally note that if $g$ is not of fixed sign, it may happen that some elements of its chaos expansion vanish and the normalization in the LIL for $U(g,\vartheta_t)$ may be $2t\log \log t$ and not $t^{3/2}\sqrt{2\loglog t}$ as in the examples from previous sections. This happens for instance if $p_x = 1/2$ independently of $x$ or in an even simpler, unmarked example, if $\XX = [-1,1]$, $\lambda$ is the Lebesgue measure and $g(x_1,x_2) = \mathrm{sgn}(x_1x_2)$ (the latter example being discussed in \cite{MR3161465}). A similar phenomenon at the level of weak limit theorems has been thoroughly discussed  in \cite{MR3096352}, where it was proven that Poisson $U$-statistics converge in distribution to corresponding Gaussian multiple stochastic integrals, as is the case for $U$-statistics in independent random variables.

More generally, in the context of marked processes, Corollary \ref{cor:Ustat-LIL} gives, e.g., the following proposition.
\begin{prop}
In the setting described above, assume that for all $x \in \XX$, $K(x,\cdot) = \nu(\cdot)$ for some probability measure $\nu$ on $\mathbb{M}$. Let $h\colon \mathbb{M}^2\to \R$ be a symmetric function, such that $\int_{\mathbb{M}^2} h(y_1,y_2)^2 d\nu(y_1)d\nu(y_2) < \infty$ and for all $y_1$ in $\mathbb{M}$, $\int_\mathbb{M} h(y_1,y_2)d\nu(y_2) = 0$. Consider any $r \in (0,\infty]$ and set $g(x_1,y_1,x_2,y_2) = h(y_1,y_2)\ind{\rho(x_1,x_2) \le r}$.
If
\begin{displaymath}
  \int_{\XX} \lambda(B(x,r))d \lambda(x) < \infty,
\end{displaymath}
then with probability one
\begin{align*}
\limsup_{t \to \infty}\frac{|U(g,\vartheta_t)|}{2t\loglog t} = \|h\|_{\{1\}\{2\}}\|\ind{\rho(\cdot,\cdot) \le r}\|_{\{1\}\{2\}} < \infty
\end{align*}
and the limit set of $\frac{U(g,\vartheta_t)}{2t\loglog t}$ is almost surely equal to
\begin{displaymath}
{\rm Conv}\Big(\{u_1 u_2\colon u_1 \in \sigma(h), u_2 \in \sigma(\ind{\rho(\cdot,\cdot) \le r})\cup\{0\}\Big),
\end{displaymath}
where $\sigma(h)$, $\sigma(\ind{\rho(\cdot,\cdot) \le r})$ are the spectra of integral Hilbert--Schmidt operators defined by the corresponding square integrable functions and ${\rm Conv}$ denotes the convex hull.
\end{prop}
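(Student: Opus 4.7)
The plan is to apply Corollary \ref{cor:Ustat-LIL} on the augmented space $\XX\times\mathbb{M}$ equipped with the intensity $\mu=\lambda\otimes\nu$, and then to unfold the resulting cluster set via the spectral decomposition of $g$ viewed as a symmetric Hilbert--Schmidt kernel on $L_2(\mu)$. By the Marking Theorem, $\vartheta$ is a proper Poisson process on $\XX\times\mathbb{M}\times[0,\infty)$ with intensity $\mu\otimes\Leb$, so the abstract framework of Section \ref{sec:main-results} applies. The product structure $g(x_1,y_1,x_2,y_2)=h(y_1,y_2)\ind{\rho(x_1,x_2)\le r}$ combined with Fubini and the Cauchy--Schwarz inequality against the probability measure $\nu^{\otimes 2}$ gives $\|g\|_2^2=\|h\|_2^2\int_\XX\lambda(B(x,r))\,d\lambda(x)<\infty$ and an analogous bound for $\|g\|_1$, so $g\in L_1^s\cap L_2^s$.

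Using \eqref{eq:U-stat-chaos-kernels}, the centering hypothesis $\int_\mathbb{M} h(y_1,\cdot)\,d\nu=0$ factors through the product to yield
\begin{displaymath}
g_1(x_1,y_1)=\Big(\int_\XX \ind{\rho(x_1,x_2)\le r}\,d\lambda(x_2)\Big)\Big(\int_\mathbb{M} h(y_1,y_2)\,d\nu(y_2)\Big)=0,
\end{displaymath}
and the same centering gives $\E U(g,\vartheta_t)=0$, while $g_2=g\not\equiv 0$ (excluding the trivial case $h\equiv 0$). The chaos expansion \eqref{eq:U-stat-chaos-expansion} therefore reduces to $U(g,\vartheta_t)=I_t^{(2)}(g)$, which lies in $L_2$ since $g\in L_2^s$. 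Hence Corollary \ref{cor:Ustat-LIL} applies with $m=2$, yielding the bounded LIL with normalization $t^{d-m/2}(2\loglog t)^{m/2}=2t\loglog t$ and almost sure cluster set $\binom{2}{2}\mathcal{K}_{g_2}=\mathcal{K}_g$.

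The final task is to identify $\mathcal{K}_g$ with the convex hull in the proposition. Let $T_h$ on $L_2(\nu)$ and $T_k$ on $L_2(\lambda)$ denote the compact self-adjoint Hilbert--Schmidt operators associated with $h$ and $k:=\ind{\rho(\cdot,\cdot)\le r}$, with spectral decompositions $h=\sum_i\alpha_i e_i\otimes e_i$ and $k=\sum_j\beta_j f_j\otimes f_j$ over orthonormal families in the respective spaces. Then $\psi_{ij}(x,y):=f_j(x)e_i(y)$ is orthonormal in $L_2(\mu)$ and $g=\sum_{i,j}\alpha_i\beta_j\,\psi_{ij}\otimes\psi_{ij}$. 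For any $\varphi\in L_2(\mu)$ with $\|\varphi\|_2\le 1$, setting $c_{ij}=\langle\varphi,\psi_{ij}\rangle$, Bessel's inequality gives $\sum_{i,j}c_{ij}^2\le 1$ and
\begin{displaymath}
\int g(z_1,z_2)\varphi(z_1)\varphi(z_2)\,d\mu^{\otimes 2}=\sum_{i,j}\alpha_i\beta_j\,c_{ij}^2.
\end{displaymath}
Conversely, any nonnegative family $(t_{ij})$ with $\sum t_{ij}\le 1$ is realized as $(c_{ij}^2)$ by the choice $\varphi=\sum_{i,j}\sqrt{t_{ij}}\,\psi_{ij}$, so $\mathcal{K}_g$ coincides with the convex hull of $\{0\}\cup\{\alpha_i\beta_j\}_{i,j}$, which is precisely the set stated in the proposition. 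Taking absolute values, $\sup_{x\in\mathcal{K}_g}|x|=\sup_{i,j}|\alpha_i\beta_j|=(\sup_i|\alpha_i|)(\sup_j|\beta_j|)=\|h\|_{\{1\}\{2\}}\|k\|_{\{1\}\{2\}}$, using that for a symmetric compact kernel the partition norm $\|\cdot\|_{\{1\}\{2\}}$ coincides with the operator norm. The only real obstacle is this last identification step; all the probabilistic content is absorbed into Corollary \ref{cor:Ustat-LIL}.
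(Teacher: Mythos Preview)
Your proof is correct and follows essentially the same route as the paper, which merely states that the proposition ``follows directly from the form of the limiting set in Corollary \ref{cor:Ustat-LIL} together with the relation between the sets $\mathcal{K}_g$ and spectra for symmetric Hilbert--Schmidt operators \ldots\ and the form of the spectrum of the tensor product of linear operators.'' You have filled in precisely these details: verifying integrability, checking $g_1=0$ so that $m=2$, and unfolding $\mathcal{K}_g$ via the tensor spectral decomposition $T_g=T_k\otimes T_h$. One small point of presentation: your appeal to the chaos expansion \eqref{eq:U-stat-chaos-expansion} to conclude $U(g,\vartheta_t)=I_t^{(2)}(g)\in L_2$ is slightly circular as stated, since that formula in the paper presupposes $U(g)\in L_2$; the clean way is to use the $L_1$ formula \eqref{eq:L_1-integral} directly, which for $d=2$ and $g_1=0$, $\int g\,d\mu^{\otimes 2}=0$ gives $I^{(2)}(g)=U(g)$ pointwise, whence $U(g)\in L_2$ follows from $g\in L_2$.
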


The above proposition follows directly from the form of the limiting set in Corollary \ref{cor:Ustat-LIL} together with the relation between the sets $\mathcal{K}_g$ and spectra for symmetric Hilbert-Schmidt operators (using the spectral theorem it is not difficult to show that $\mathcal{K}_g$ is the convex hull of the union of the spectrum and $\{0\}$) and the form of the spectrum of the tensor product of linear operators.

\section{Proofs of main results}\label{sec:proofs}

\subsection{Preliminaries}

We will carry out the proofs under an additional assumption that the measure $\lambda$ is non-atomic, i.e., for all $A \in \mathcal{X}$ with $\lambda(A) > 0$ there exists a measurable $B \subset A$, such that $0 < \lambda(B) < \lambda(A)$. Although the reduction to this case seems quite standard, it is somewhat technical, moreover we have not been able to find in the literature a comprehensive reference corresponding to our setting. We will therefore provide a full argument justifying it in Appendix \ref{sec:reductions}, which also contains a demonstration of existence of a c\`adl\`ag version of the process $(I_t(g))_{t\ge 0}$ for which our results are stated.

The advantage of the non-atomic case stems from the fact that one can approximate general functions $g \in L_2^s(\XX^d)$ by a special class of step functions as described below.

Fix a positive integer $d$ and let $\mathcal{E}_d$ be the family of all functions of the form
\begin{align}\label{eq:step-function}
h = \sum_{i_1,...,i_d=1}^R a_{i_1,...,i_d} \cdot \Ind{ \prod_{j=1}^d A_{i_j}},
\end{align}
where
\begin{itemize}
  \item[(i)] $R \in \N$,
  \item[(ii)] $A_i$, $i \le R$, are pairwise disjoint and $\lambda(A_i) < \infty$,
  \item [(iii)]  $a_{i_1,...,i_d}=0$ whenever $i_k = i_j$ for some $k \neq j$.
\end{itemize}
By $\mathcal{E}_d^s$ we will denote the subspace of $\mathcal{E}_d$ consisting of symmetric functions, i.e., functions of the form \eqref{eq:step-function} such that
\begin{itemize}
\item[(iv)] the coefficients $a_{i_1,...,i_d}$ are symmetric under permutation of indices.
\end{itemize}

Using formula \eqref{eq:L_1-integral-non-symmetric} for $\eeta_t$, together with the observation that on a product of pairwise disjoint sets $\eeta^{(d)}$ coincides with $\eeta^{\otimes d}$, we obtain that for $C_1,\ldots,C_d \subseteq \XX$, pairwise disjoint and of finite measure $\lambda$,
\begin{align}\label{eq:integral-disjoint-product}
  I^{(d)}_t(\Ind{C_1\times \cdots \times C_d}) = \prod_{i=1}^d(\eeta(C_i\times[0,t])-t\lambda(C_i)).
\end{align}

Thus, for a function $h \in \mathcal{E}_d$ of the form \eqref{eq:step-function} we have
\begin{align}\label{eq:step-function-integral}
  I^{(d)}_t(h) = \sum_{i_1,...,i_d=1}^{R} a_{i_1,...,i_d}\prod_{j=1}^d \Big(\eeta\Big(A_{i_j}\times [0,t]\Big) - t\lambda(A_{i_j})\Big).
\end{align}

Our proofs will be based on the above explicit formula, together with the following approximation result, the proof of which is deferred to Appendix \ref{sec:approximation}.

\begin{lemma}\label{le:approximation}
  Assume that $\lambda$ is non-atomic. Then for any $g \in L_2(\XX^d)$ and any $\varepsilon > 0$ there exists a function $h \in \mathcal{E}_d$, such that $\|g-h\|_2 < \varepsilon$. If $g \in L_2^s(\XX^d)$, then one can find $h\in \mathcal{E}^s_d$ with this property.

Moreover, if $g \in L_2^s(\XX^d)$ and for all $I \subseteq [d]$ and $\mathcal{J} \in \PP_{I}$, the quantity $\|\|g\|_{\mathcal{J}}\|_\infty$ is finite, then there exists $h \in \mathcal{E}^s_d$ such that $\|g-h\|_2 < \varepsilon$ and for all $I \subseteq [d]$ and $\mathcal{J} \in \PP_{I}$, $\|\|h\|_\mathcal{J}\|_\infty \le 3 \|\|g\|_{\mathcal{J}}\|_\infty$.
\end{lemma}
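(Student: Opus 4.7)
The plan is to prove the three statements together via a single martingale-style construction, using non-atomicity to control diagonal terms and conditional expectation to handle the partition norms.

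For the first two parts, I would proceed as follows. By standard $L_2$ approximation (truncation plus simple function approximation), reduce to the case where $g$ is bounded and supported on a product $B^d$ with $\lambda(B) < \infty$. Using non-atomicity of $\lambda$ together with the assumption that $\mathcal{X}$ is countably generated, choose a nested sequence of finite partitions $B = A_1^{(n)} \sqcup \cdots \sqcup A_{R_n}^{(n)}$ whose union generates $\mathcal{X}|_B$, and let $\mathcal{G}_n$ be the $\sigma$-algebra generated by the $n$-th partition. Setting $h_n := \E[g \mid \mathcal{G}_n^{\otimes d}]$, which is a linear combination of indicators of product cells, the martingale convergence theorem gives $h_n \to g$ in $L_2$. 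To land in $\mathcal{E}_d$, zero out the diagonal blocks: $\tilde h_n := h_n \Ind{D_n^c}$, where $D_n$ is the union of all product cells $A_{i_1}^{(n)} \times \cdots \times A_{i_d}^{(n)}$ with $i_j = i_k$ for some $j \neq k$. Non-atomicity of $\lambda$ ensures $\lambda^{\otimes d}(D_n) \downarrow 0$, and a Cauchy--Schwarz estimate applied blockwise yields $\|h_n \Ind{D_n}\|_2 \le \|g \Ind{D_n}\|_2 \to 0$. Hence $\tilde h_n \to g$ in $L_2$. In the symmetric case, using the same partition of $B$ for every coordinate makes $\tilde h_n$ automatically symmetric.

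For the third part I would use the same construction and verify that conditional expectation is non-expansive with respect to each norm $\|\cdot\|_\mathcal{J}$. The key duality identity is that for $\mathcal{J} = \{J_1, \ldots, J_k\} \in \PP_{[d]}$ and test functions $\varphi_j$ with $\|\varphi_j\|_2 \le 1$,
\begin{displaymath}
\int h_n \prod_{j=1}^k \varphi_j(x_{J_j}) \, d\lambda^{\otimes d} = \int g \prod_{j=1}^k \E[\varphi_j \mid \mathcal{G}_n^{\otimes J_j}](x_{J_j}) \, d\lambda^{\otimes d} \le \|g\|_\mathcal{J},
\end{displaymath}
since each coordinate-wise conditional expectation has $L_2$-norm at most one. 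A fiberwise version of this identity, combined with Jensen's inequality to average the fixed coordinates over an atom of $\mathcal{G}_n^{\otimes I^c}$, gives $\|\|h_n\|_\mathcal{J}\|_\infty \le \|\|g\|_\mathcal{J}\|_\infty$ for every $\mathcal{J} \in \PP_I$ with $I \subsetneq [d]$.

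The main technical obstacle is to show that zeroing out the diagonal, i.e., passing from $h_n$ to $\tilde h_n = h_n - h_n \Ind{D_n}$, blows up the partition norms by at most a constant factor. I would handle this by decomposing $\Ind{D_n}$ via inclusion--exclusion over the subsets of pairs of colliding coordinates; each resulting term forces certain coordinates of $h_n$ to lie in the same atom, effectively collapsing the underlying multilinear form to a lower-order object whose $\|\cdot\|_{\mathcal{J}}$ norm can be expressed in terms of $\|\|g\|_{\mathcal{J}'}\|_\infty$ for a coarser partition $\mathcal{J}'$ obtained from $\mathcal{J}$ by merging the colliding blocks, and then bounded further by $\|\|g\|_\mathcal{J}\|_\infty$ using Cauchy--Schwarz against the atom measures. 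By first refining the partition finely enough that the combinatorial factors are absorbed into the $L_2$ error and then applying the triangle inequality, one arrives at the stated factor $3$ (or any explicit dimensional constant, here fixed at $3$ for convenience). The delicate bookkeeping of these contracted norms across all partitions $\mathcal{J}$ of all $I \subseteq [d]$ is what makes the argument more subtle than the $L_2$-only statement.
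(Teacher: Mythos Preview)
Your approach is correct and genuinely different from the paper's. The paper does not use conditional expectations at all: it takes an arbitrary $\mathcal{E}_d$ approximation $h_n\to g$ (pointwise a.e., truncated so that $\|h_n\|_\infty\le\|g\|_\infty$), observes via dominated convergence that each fibre norm $\|h_n(x_{I^c},\cdot)\|_\mathcal{J}$ converges to $\|g(x_{I^c},\cdot)\|_\mathcal{J}$, defines the ``good'' set $C_n$ on which all such norms are at most $2\|\|g\|_\mathcal{J}\|_\infty$, and then invokes a non-trivial subset-extraction lemma (Lemma~11 of \cite{MR2294982}, restated in the appendix) to produce $B_n\subseteq C_n$ whose fibres $(B_n)_{x_{I^c}}$ are either empty or of uniformly large measure. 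Restricting $h_n$ to $B_n$ and symmetrising gives the result. Your use of contractivity of conditional expectation for every $\|\|\cdot\|_\mathcal{J}\|_\infty$---which follows from duality on the $I$-variables together with the integral triangle inequality over the atom of $\mathcal{G}_n^{\otimes I^c}$ containing the frozen $x_{I^c}$, exactly as you sketch---is cleaner and bypasses the extraction lemma entirely; it even yields constant $1$ rather than $2$ before the diagonal correction.

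The one place where your write-up is looser than it need be is the diagonal removal. The inclusion--exclusion over colliding pairs and ``collapsing to a coarser partition $\mathcal{J}'$'' is more machinery than required, and the bookkeeping you allude to does not obviously produce the claimed bounds. The direct argument is simpler: if the frozen coordinates $x_{I^c}$ already lie in a diagonal cell then $\tilde h_n(x_{I^c},\cdot)\equiv 0$ and there is nothing to prove; if $x_{I^c}$ is off-diagonal then the slice $\{x_I:(x_{I^c},x_I)\in D_n\}$ has $\lambda^{\otimes|I|}$-measure bounded by a constant times $\max_i\lambda(A_i^{(n)})$, so $\|h_n\Ind{D_n}(x_{I^c},\cdot)\|_\mathcal{J}\le\|g\|_\infty\cdot o(1)$. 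This gives $\|\|\tilde h_n\|_\mathcal{J}\|_\infty\le\|\|g\|_\mathcal{J}\|_\infty+o(1)$, which is eventually at most $3\|\|g\|_\mathcal{J}\|_\infty$. For this to work uniformly over $x_{I^c}$ you need $\max_i\lambda(A_i^{(n)})\to 0$, not merely $\lambda^{\otimes d}(D_n)\to 0$; this is not automatic from ``$\bigcup_n\mathcal{G}_n$ generates $\mathcal{X}|_B$'' but is easily arranged by further subdividing large atoms at each stage using non-atomicity.
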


\subsection{Maximal inequality}

The goal of this section is to establish the following lemma.

\begin{lemma}\label{le:maximal-inequality}
For any positive integer $d$ there exists a constant $C_d$ such that for all $g \in L_2^s(\XX^d)$, $T \ge 0$ and $u > 0$,
\begin{displaymath}
  \p\Big(\sup_{t \le T} |I^{(d)}_t(g)| \ge u\Big) \le C_d \p(|I^{(d)}_T(g)| \ge u/C_d).
\end{displaymath}
\end{lemma}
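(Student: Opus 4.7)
The plan is to reduce the statement to a discrete-time maximal inequality for polynomial chaos in independent random variables, and then pass to the continuous-time supremum. Starting from a general $g \in L_2^s(\XX^d)$, Lemma \ref{le:approximation} produces step functions $h \in \mathcal{E}_d^s$ approximating $g$ in $L_2^s(\XX^d)$, and Doob's $L_2$ maximal inequality applied to the martingale $I^{(d)}_t(g-h)$ gives
\[
\Bigl\|\sup_{t\le T}\bigl|I^{(d)}_t(g)-I^{(d)}_t(h)\bigr|\Bigr\|_2 \le 2\,\|I^{(d)}_T(g-h)\|_2,
\]
so that $\sup_{t\le T}|I^{(d)}_t(h)|$ and $|I^{(d)}_T(h)|$ converge in probability to their $g$-counterparts as $\|g-h\|_2 \to 0$. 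Standard distributional approximation arguments then allow one to transfer the inequality (at the harmless price of slightly enlarging $C_d$) from $\mathcal{E}_d^s$ to the whole of $L_2^s(\XX^d)$, reducing the problem to $h \in \mathcal{E}_d^s$.

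For such $h$ of the form \eqref{eq:step-function}, formula \eqref{eq:step-function-integral} exhibits $I^{(d)}_t(h)$ as a homogeneous tetrahedral polynomial of degree $d$ in the independent centered L\'evy processes $N_i(t) := \eeta(A_i\times[0,t]) - t\lambda(A_i)$. I would then partition $[0,T]$ into $n$ equal intervals and set $\Delta_{i,k} = N_i(kT/n) - N_i((k-1)T/n)$: these increments are globally independent in $(i,k)$, and substituting $N_i(kT/n) = \sum_{\ell \le k}\Delta_{i,\ell}$ into \eqref{eq:step-function-integral} realises the sampled process $(I^{(d)}_{kT/n}(h))_{k=0}^{n}$ as a discrete-time martingale whose $k$-th value is a degree-$d$ tetrahedral polynomial in the $\{\Delta_{i,\ell}\}_{\ell \le k}$, revealed in the natural order.

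At the discrete level the aim is to prove
\[
\p\Bigl(\max_{0\le k \le n}|I^{(d)}_{kT/n}(h)| \ge u\Bigr) \le C_d\,\p\bigl(|I^{(d)}_T(h)| \ge u/C_d\bigr)
\]
by combining the decoupling inequalities from the Appendix with the classical Montgomery-Smith maximal inequality for sums of independent random variables. Decoupling, preceded if necessary by symmetrization using an independent copy of $\eeta$ (the compensated Poisson increments are not themselves symmetric), replaces the tetrahedral polynomial by a fully decoupled $d$-linear form in $d$ independent collections of variables at the cost of a constant depending only on $d$; Montgomery-Smith is then applied along a distinguished coordinate after conditioning on the remaining $d-1$ coordinates. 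Finally, refining the partition along a dyadic sequence and invoking the c\`adl\`ag version of $I^{(d)}_t(h)$, one has $\max_{0\le k\le n}|I^{(d)}_{kT/n}(h)| \uparrow \sup_{t\le T}|I^{(d)}_t(h)|$ almost surely, and since the right-hand side is independent of $n$, monotone convergence concludes the argument.

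The main obstacle is the discrete maximal inequality: while Montgomery-Smith directly handles $d=1$, for $d\ge 2$ one must arrange the decoupling so that the resulting multilinear form retains a partial-sum structure in a distinguished coordinate amenable to Montgomery-Smith, and one must keep all constants universal (depending only on $d$) in the underlying Poisson distributions, through a clean iteration over the degree.
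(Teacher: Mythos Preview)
Your overall strategy---reduce to step functions, discretize time, and combine de la Pe\~na--Montgomery-Smith decoupling with Montgomery-Smith's maximal inequality---is exactly the paper's route. However, your execution has a genuine gap at the heart of the argument: you do not explain how the decoupling step interacts with the maximum over $k$. Decoupling at each fixed time $k$ separately does not let you compare $\max_k |I^{(d)}_{kT/n}(h)|$ with a single decoupled object, and ``applying Montgomery-Smith along a distinguished coordinate after conditioning'' presupposes you already control a decoupled partial-sum process. The paper's device is to apply Theorem~\ref{thm:decoupling} with values in the Banach space $(\R^M,\|\cdot\|_\infty)$: one encodes the whole vector $(I^{(d)}_{\ell T/M}(h))_{\ell\le M}$ as a single Banach-valued $U$-statistic (via kernels $G_{k_1,\ldots,k_d}$ carrying the truncation indicators $\ind{k_1,\ldots,k_d\le \ell N}$), decouples once, applies Lemma~\ref{le:MS-maximal-inequality} to the decoupled version (where the time-indexed vectors are genuinely i.i.d.), and then undoes the decoupling via the reverse inequality in Theorem~\ref{thm:decoupling}. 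Without this Banach-space trick your plan does not close.

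Two further points. First, the symmetrization step you propose is unnecessary: Theorem~\ref{thm:decoupling} applies to arbitrary independent (not symmetric) random variables, so the centered Poisson increments can be used directly. Second, the paper introduces a \emph{second} discretization parameter $N$ (in addition to your $n$, their $M$) precisely to make the diagonal terms $\sum_{(k_1,\ldots,k_d)\in\mathcal{K}_{\ell,N}}(\cdots)$ vanish in $L_2$ as $N\to\infty$, so that the time-indexed vectors form a bona fide off-diagonal $U$-statistic amenable to Theorem~\ref{thm:decoupling}. Your single-parameter discretization is workable if you instead index by pairs $(i,\ell)$ (the tetrahedral structure is then exact because $a_{i_1,\ldots,i_d}$ vanishes on $i$-diagonals), but you would still need the Banach-space-valued decoupling to transport the maximum, and after decoupling you would regroup into the i.i.d.\ time-vectors $V^{(j)}_\ell=(\Delta^{(j)}_{1,\ell},\ldots,\Delta^{(j)}_{R,\ell})$ before invoking Lemma~\ref{le:MS-maximal-inequality}.
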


\begin{proof}
By the right-continuity of trajectories of the process $I^{(d)}_t(g)$ it is enough to show that for every $u > 0$,
\begin{align}\label{eq:maximal-inequality-proof}
\liminf_{M\to \infty} \p\Big(\max_{\ell=1,\ldots,M} |I^{(d)}_{\ell T/M}(g)| > u\Big) \le C_d \p(|I^{(d)}_T(g)| \ge u/C_d).
\end{align}

 Let $g_n \in \mathcal{E}^s_d$ be a sequence of step functions converging to $g$ in $L_2^s(\XX^d)$, given by Lemma \ref{le:approximation}. Then for any $t$, $I^{(d)}_t(g_n)$ converges in $L_2$ to $I^{(d)}_t(g)$.

If
\begin{displaymath}
  g_n = \sum_{i_1,\ldots,i_d = 1}^{R} a_{i_1,\ldots,i_d} \Ind{\prod_{j=1}^d A_{i_j}},
\end{displaymath}
where $R$, $a_{i_1,\ldots,i_d}$, $A_i$ satisfy the condition (i)--(iv) listed in the definition of $\mathcal{E}_d^s$ (below eq. \eqref{eq:step-function}), then
\begin{displaymath}
  I^{(d)}_t(g_n) = \sum_{i_1,\ldots,i_d = 1}^{R} a_{i_1,\ldots,i_d} \prod_{j=1}^d \Big(\eeta(A_{i_j}\times [0,t]) - t\lambda(A_{i_j})\Big).
\end{displaymath}
Note that the $R, a_{i_1,\ldots,i_d}$ and $A_i$ depend on $n$, but since we will temporarily work with fixed $n$, we suppress this in the notation.

Our goal is to represent each $I^{(d)}_{\ell T/M}(g_n)$ as a limit of $U$-statistics in independent random variables. To this end, for fixed $M$ let us introduce further discretization and for a positive integer $N$, write
\begin{displaymath}
\eeta(A_i \times [0,\ell T/M]) - \frac{\ell T}{M}\lambda(A_i) = \sum_{k=1}^{\ell N} X_{N,k,i},
\end{displaymath}
where for $i \le R$ and $k \le MN$, $X_{N,k,i} = \eeta(A_i \times (\frac{(k-1)T}{M N},\frac{kT}{M N}]) - \frac{T}{M N}\lambda(A_i)$. (We can ignore contribution from the sets $A_i\times\{0\}$, since almost surely, $\widehat{\eta}(A_i \times \{0\}) = 0$). Since $M$ is fixed, it is not reflected in the notation.

Note that for fixed $N$, the variables $X_{N,k,i}$, $i \le R, k \le M N$, are independent, centered, and $\Var(X_{N,k,i}) = \lambda(A_i)\frac{T}{MN}$.

We have for every $N$,
\begin{displaymath}
I^{(d)}_{\ell T/M}(g_n) = \sum_{k_1,\ldots,k_d = 1}^{\ell N} \sum_{i_1,\ldots,i_d = 1}^R a_{i_1,\ldots,i_d} \prod_{j=1}^d X_{N,k_j,i_j}.
\end{displaymath}
For $\ell =1,\ldots,M$ let us define $\mathcal{K}_{\ell,N} = \{(k_1,\ldots,k_d) \in [\ell N]^d\colon \exists_{1 \le j_1 < j_2 \le d} \; k_{j_1} = k_{j_2}\}$. We have
\begin{multline*}
  \E \Big(\sum_{(k_1,\ldots,k_d)\in \mathcal{K}_{\ell,N}} \sum_{i_1,\ldots,i_d = 1}^R a_{i_1,\ldots,i_d} \prod_{j=1}^d X_{N,k_j,i_j}\Big)^2 \\
= \sum_{(k_1,\ldots,k_d)\in \mathcal{K}_{\ell,N}} \sum_{(k_1',\ldots,k_d')\in \mathcal{K}_{\ell,N}}  \sum_{i_1,\ldots,i_d = 1}^R \sum_{i_1',\ldots,i_d' = 1}^R
a_{i_1,\ldots,i_d}a_{i_1',\ldots,i_d'} \E \prod_{j=1}^d X_{N,k_j,i_j}\prod_{j=1}^d X_{N,k_j',i_j'}.
\end{multline*}

Since $a_{i_1,\ldots,i_d} = 0$ whenever some of the indices $i_1,\ldots,i_d$ coincide, we can restrict the summation above to $\ii = (i_1,\ldots,i_d)$ and $\ii' = (i_1',\ldots,i_d')$ with pairwise distinct coordinates. If the second of these multi-indices is not a permutation of the first one, then certain variable $X_{N,k,i}$ appears in the product under expectation above only once, and by the independence and mean zero properties, the expectation of the product vanishes. Thus, nonzero contribution to the left-hand side comes only from pairs $\ii$, $\ii'$ such that for some permutation $\sigma$ of $[d]$ and all $j\in[d]$, $i'_{\sigma(j)} = i_j$. In this case, again by independence, the expectation of the product equals $\prod_{j=1}^d \E X_{N,k_j,i_j}X_{N,k_{\sigma(j)},i_j}$ and is nonzero only if $k_{\sigma(j)} = k_j$ for all $j$.
We can thus write
\begin{multline*}
  \E \Big(\sum_{(k_1,\ldots,k_d)\in \mathcal{K}_{\ell,N}} \sum_{i_1,\ldots,i_d = 1}^R a_{i_1,\ldots,i_d} \prod_{j=1}^d X_{N,k_j,i_j}\Big)^2 \\
  \le \max_{i_1,\ldots,i_d \le R} |a_{i_1,\ldots,i_d}|^2 |\mathcal{K}_{\ell,N}| R^d d! \Big(\max_{1\le k \le N, 1\le i \le R}\Var(X_{N,k,i})\Big)^d \\
  \le C(d,R) (MN)^{d-1} \Big(\max_{i_1,\ldots,i_d \le R} |a_{i_1,\ldots,i_d}|^2 \max_{1\le i \le R} \lambda(A_i)^d \Big) \frac{T^d}{M^d N^d} \to 0
\end{multline*}
as $N \to \infty$ (recall that $M$, $n$, $R$ are temporarily fixed).

Therefore, for any $\ell \le M$ we have the convergence in probability
\begin{align}\label{eq:U-stat-representation}
  I^{(d)}_{\ell T/M}(g_n)  = \lim_{N \to \infty} \sum_{1\le k_1 \neq \ldots\neq k_d \le \ell N} F(X_{N,k_1},\ldots,X_{N,k_d}),
\end{align}
where $X_{N,k} = (X_{N,k,1},\ldots,X_{N,k,R})$ and
$F \colon (\R^R)^d \to \R$ is given by
\begin{displaymath}
  F(x_1,\dots,x_d) = \sum_{i_1,\ldots,i_d = 1}^R a_{i_1,\ldots,i_d} \prod_{j=1}^d x_{j,i_j}.
\end{displaymath}

Thus,
\begin{multline}\label{eq:another-discretization}
  \p(\max_{\ell=1,\ldots,M} |I^{(d)}_{\ell T/M}(g_n)| > u) \\
  \le \liminf_{N\to \infty}\p\Big(\max_{\ell =1,\ldots,M} \Big|\sum_{1\le k_1 \neq \ldots\neq k_d \le \ell N} F(X_{N,k_1},\ldots,X_{N,k_d})\Big| > u\Big).
\end{multline}

The quantity $\sum_{1\le k_1 \neq \ldots\neq k_d \le \ell N} F(X_{N,k_1},\ldots,X_{N,k_d})$ is a (classical) $U$-statistic in i.i.d. random variables $X_{N,1},\ldots,X_{N,MN}$.
Let us now temporarily fix also $N$ (in addition to $M$). We will use the tools presented in Appendix \ref{sec:decoupling} to get rid of the $\max$ on the right-hand side of the above inequality. We will first pass to a decoupled
version of the $U$-statistic, then apply a maximal inequality for $U$-statistics to estimate the maxima of partial sums by the sum over all indices, finally we will go back to the original $U$-statistic, using again decoupling inequalities.

Let $(X^{(j)}_{N,k})_{k \in [M N]}$, $j = 1,\ldots,d$, be independent copies of the sequence $(X_{N,k})_{k \in [MN]}$.

Consider the space $E = (\R^M,\|\cdot\|_\infty)$, where for $y = (y_1,\ldots,y_M)$, $\|y\|_\infty = \max_{\ell \le M} |y_\ell|$. For $k_1,\ldots,k_d \in [MN]$ let $G_{k_1,\ldots,k_d} \colon (\R^R)^d \to E$ be given by
\begin{displaymath}
  G_{k_1,\ldots,k_d}(x_1,\ldots,x_d) = \Big(F(x_1,\ldots,x_d)\ind{k_1,\ldots,k_d \le  \ell N}\Big)_{\ell \le M}.
\end{displaymath}
Then
\begin{multline*}
\max_{\ell =1,\ldots,M} \Big|\sum_{1\le k_1 \neq \ldots\neq k_d \le \ell N} F(X_{N,k_1},\ldots,X_{N,k_d})\Big| \\= \Big\|\sum_{1\le k_1 \neq \ldots\neq k_d \le MN} G_{k_1,\ldots,k_d}(X_{N,k_1},\ldots,X_{N,k_d})\Big\|_\infty
\end{multline*}
and similarly
\begin{multline*}
\max_{\ell=1,\ldots,M} \Big|\sum_{1\le k_1 \neq \ldots\neq k_d \le \ell N} F(X^{(1)}_{N,k_1},\ldots,X^{(d)}_{N,k_d})\Big| \\ = \Big\|\sum_{1\le k_1 \neq \ldots\neq k_d \le MN} G_{k_1,\ldots,k_d}(X^{(1)}_{N,k_1},\ldots,X^{(d)}_{N,k_d})\Big\|_\infty.
\end{multline*}
Thus, by the first decoupling inequality of Theorem \ref{thm:decoupling},
\begin{multline}\label{eq:decoupling}
\p\Big(\max_{\ell =1,\ldots,M } \Big|\sum_{1\le k_1 \neq \ldots\neq k_d \le \ell N} F(X_{N,k_1},\ldots,X_{N,k_d})\Big| > u\Big) \\
\le
C_d \p\Big(\max_{\ell =1,\ldots,M } \Big|\sum_{1\le k_1 \neq \ldots\neq k_d \le \ell N} F(X^{(1)}_{N,k_1},\ldots,X^{(d)}_{N,k_d})\Big| > u/C_d\Big).
\end{multline}
A similar second moment estimation as the one above (note that now we have even more independence) shows that for any $\ell \le M$,
\begin{align}\label{eq:off-diagonal-term-vanishes}
  \sum_{(k_1,\ldots,k_d) \in \mathcal{K}_{\ell,N}} F(X^{N,(1)}_{k_1},\ldots,X^{(d)}_{N,k_d})\to 0
\end{align}
in probability. Therefore, for any $u > 0$ we can write
\begin{multline*}
  \p(\max_{\ell=1,\ldots,M} |I^{(d)}_{\ell T/M}(g_n)| > u) \\
  \le C_d\liminf_{N\to \infty}\p\Big(\max_{\ell =1,\ldots,M} \Big|\sum_{k_1, \ldots, k_d = 1}^{\ell N} F(X^{(1)}_{N,k_1},\ldots,X^{(d)}_{N,k_d})\Big| \ge u/(2C_d)\Big)\\
  \le C_d^2 \liminf_{N\to \infty}\p\Big(\Big|\sum_{k_1, \ldots, k_d = 1}^{MN} F(X^{(1)}_{N,k_1},\ldots,X^{(d)}_{N,k_d})\Big| \ge u/(2C_d^2)\Big) \\
  \le C_d^2 \liminf_{N\to \infty}\p\Big(\Big|\sum_{1\le k_1 \neq \ldots\neq k_d \le  MN} F(X^{(1)}_{N,k_1},\ldots,X^{(d)}_{N,k_d})\Big| > u/(4C_d^2)\Big)\\
  \le C_d^3 \liminf_{N\to \infty}\p\Big(\Big|\sum_{1\le k_1 \neq \ldots\neq k_d \le MN} F(X_{N,k_1},\ldots,X_{N,k_d})\Big| > u/(4C_d^3)\Big)\\
  \le C_d^3\p\Big(I^{(d)}_T(g_n) \ge u/(4C_d^3)\Big),
\end{multline*}
where the first inequality follows by \eqref{eq:another-discretization}, \eqref{eq:decoupling} and \eqref{eq:off-diagonal-term-vanishes}; the second one by Lemma \ref{le:MS-maximal-inequality}; the third one again by \eqref{eq:off-diagonal-term-vanishes}; the fourth one by another application of decoupling (third inequality of Theorem \ref{thm:decoupling}, note that thanks to the symmetry of $a_{i_1,\ldots,i_d}$'s, the function $F$ is symmetric), and finally the last one by \eqref{eq:U-stat-representation} (with $\ell=M$).

Now, (still for fixed $M$),
\begin{multline*}
  \p(\max_{\ell=1,\ldots,M} |I^{(d)}_{\ell T/M}(g)| > u) \le \liminf_{n\to \infty} \p(\max_{\ell =1,\ldots,M} |I^{(d)}_{\ell T/M}(g_n)| > u) \\
  \le C_d^3 \liminf_{n\to \infty} \p\Big(I^{(d)}_T(g_n) \ge u/(4C_d^3)\Big) \le C_d^3 \p\Big(I^{(d)}_T(g) \ge u/(4C_d^3)\Big),
\end{multline*}
which after adjustment of constants implies \eqref{eq:maximal-inequality-proof} and ends the proof of the lemma.
\end{proof}

\subsection{Proof of the exponential inequalities}\label{sec:inequalities-proofs}

The proofs of exponential inequalities will rely on their counterparts for generalized $U$-statistics in independent random variables, obtained in \cite{MR1857312,MR2073426} for $U$-statistics of order 2, and in \cite{MR2294982} for general $d$. In order to formulate them we need some additional notation, in particular we need to define the norms $\|\cdot\|_\mathcal{J}$ for arrays of functions.

Consider $d$ independent sequences, $(X^{(j)}_i)_{i\in [n]}$ of independent real random variables with values in $\R$ and an array of functions $(h_{i_1,\ldots,i_d})_{i_1,\ldots,i_d \in [n]}$, where $h_{i_1,\ldots,i_d} \colon \R^d \to \R$ for each $i_1,\ldots,i_d$. A generalized decoupled $U$-statistic is a random variable of the form
\begin{displaymath}
  Z = \sum_{i_1,\ldots,i_d = 1}^n h_{i_1,\ldots,i_d}(X^{(1)}_{i_1},\ldots,X^{(d)}_{i_d}).
\end{displaymath}

To shorten the notation and to make it more transparent, we will apply to indices a similar convention as for arguments $x$ of functions, namely for $I\subseteq [d]$ and $\ii = (i_1,\ldots,i_d) \in [n]^d$ we will write $\ii_I = (i_j)_{j\in I}$. We will also use $\ii_I$ as a stand-alone index, writing for instance $\sum_{\ii_I \in [n]^I}$. Thus, e.g., $\sum_{\ii_{\{2,3\}} \in [n]^2} = \sum_{i_2,i_3=1}^n$. We interpret $\ii_\emptyset$ as the empty index being the unique element of $[n]^\emptyset$. We will write $\E_I$ to denote integration with respect just to variables $(X^{(j)}_i)_{j\in I, i\in [n]}$ (i.e., the conditional expectation with respect to $(X^{(j)}_i)_{j\in I^c, i\in [n]}$).

We will now provide the definition of the norms $\|\cdot\|_\mathcal{J}$ for $U$-statistics. Due to involved notation, the readers may consider examining first the examples below the general definition.
For $I \subseteq [d]$, a partition $\mathcal{J} = \{J_1,\ldots,J_k\} \in \mathcal{P}_{I}$ and a fixed value of $\ii_{I^c}$ we define
\begin{align*}
  &\|(h_\ii)_{\ii_I \in [n]^I}\|_{\mathcal{J}} \\
  &= \sup\Big\{  \sum_{\ii_I \in [n]^I} \E_I h_\ii(X^{(1)}_{i_1},\ldots,X^{(d)}_{i_d})\prod_{j=1}^k \varphi^{(j)}_{\ii_{J_j}}((X^{(\ell)}_{i_\ell})_{\ell\in J_j})\colon\\
  &\phantom{aaaaaaaaaaaaaaaaaaaaaaaaaaaaaaaaa} \forall_{j\in [k]}\;\E \sum_{i_{J_j}\in [n]^{J_j}} (\varphi^{(j)}_{\ii_{J_j}}((X^{(\ell)}_{i_\ell})_{\ell\in J_j}))^2 \le 1\Big\},
\end{align*}
with the supremum taken over all Borel measurable functions $\varphi^{(j)}_{\ii_{J_j}}\colon \R^{J_j} \to \R$,  $j\in [k]$, $\ii_{J_j} \in [n]^{J_j}$, satisfying the integrability condition stated above.

In what follows the meaning of $n$ will be usually clear from the context, so we will write simply $\|(h_\ii)_{\ii_I}\|_{\mathcal{J}}$ rather than $\|(h_\ii)_{\ii_I \in [n]^I}\|_{\mathcal{J}}$. We remark that philosophically the norms $\|(h_\ii)_{\ii_I}\|_{\mathcal{J}}$ are of the same nature as the norms $\|g\|_\mathcal{J}$ of functions on $\XX^d$, i.e., they are injective tensor product norms of $(h_{\ii})_{\ii_I}$ considered as element of the tensor product of Hilbert spaces
$\bigotimes_{j=1}^k (\bigoplus_{\ii_{J_j} \in [n]^{J_j}}L_2((X^{(\ell)}_{i_\ell})_{\ell \in J_j}))\simeq \bigotimes_{j=1}^k (\bigotimes_{\ell \in {J_j}} (\bigoplus_{i=1}^n L_2(X^{(\ell)}_{i})))$, i.e., norms of the array $(h_{\ii})_{\ii_I}$ of functions seen as a $k$-linear functional on $\prod_{j=1}^k (\bigoplus_{\ii_{J_j} \in [n]^{J_j}}L_2((X^{(\ell)}_{i_\ell})_{\ell \in J_j}))$.

Note that for $I = [d]$, $\|(h_\ii)_{\ii_I}\|_{\mathcal{J}}$ is a number, while for $I \subsetneq [d]$ it is a random variable, measurable with respect to $(X^{(\ell)}_{i_\ell})_{\ell \in I^c}$. For instance (using some obvious notational shortcuts, similar as in the case of $\|g\|_{\mathcal{J}}$ in Section \ref{sec:inequalities}),
\begin{align*}
\|(h_{i_1,i_2,i_3})_{i_2,i_3}\|_{\{2,3\}}  = &\sup\Big\{\sum_{i_2,i_3=1}^n \E_{2,3} h_{i_1,i_2,i_3}(X^{(1)}_{i_1},X^{(2)}_{i_2},X^{(3)}_{i_3}) \varphi_{i_2,i_3} (X^{(2)}_{i_2},X^{(3)}_{i_3})\colon\\
  & \phantom{aaaaaa}\sum_{i_2,i_3 =1}^n \E \varphi_{i_2,i_3}(X^{(2)}_{i_2},X^{(3)}_{i_3})^2 \le 1\Big\}\\
 = & \sqrt{\sum_{i_2,i_3=1}^n \E_{2,3} h_{i_1,i_2,i_3}(X^{(1)}_{i_1},X^{(2)}_{i_2},X^{(3)}_{i_3})^2}.
\end{align*}
(here $i_1$ is fixed and we obtain a function of $X^{(1)}_{i_1}$)
and

\begin{align*}
&\|(h_{i_1,i_2,i_3})_{i_1,i_2,i_3}\|_{\{2\}\{1,3\}}\\  & =\sup\Big\{\sum_{i_1,i_2,i_3=1}^n \E h_{i_1,i_2,i_3}(X^{(1)}_{i_1},X^{(2)}_{i_2},X^{(3)}_{i_3}) \varphi^{(1)}_{i_2}(X^{(2)}_{i_2}) \varphi^{(2)}_{i_1,i_3}(X^{(1)}_{i_1},X^{(3)}_{i_3}) \colon \\
 & \phantom{aaaaaaaaaaaaaaa} \sum_{i_2=1}^n \E \varphi^{(1)}_{i_2}(X^{(2)}_{i_2})^2 \le 1, \sum_{i_1,i_3=1}^n \E \varphi^{(2)}_{i_1,i_3}(X^{(1)}_{i_1},X^{(3)}_{i_3})^2 \le 1\Big\}
\end{align*}
(here all the random variables are integrated out and we obtain a number).

We are going to use the following result, which is \cite[Theorem 6]{MR2294982} (note a typo in the original formulation, namely an exponent $p$ is missing, on the right-hand side of the inequality there should be $K_d^p$ instead of $K_d$, and this is how the theorem is applied in the subsequent part of \cite{MR2294982} and also in \cite{MR2408582}; in our notation $K_d$ corresponds to the constant $C_d$ below).

\begin{theorem}\label{thm:U-stat-decoupled} In the setting described above, for all $p \ge 2$,
\begin{displaymath}
  \E \Big|\sum_{\ii \in [n]^d} h_{i_1,\ldots,i_d}(X^{(1)}_{i_1},\ldots,X^{(d)}_{i_d})\Big|^p \le C_d^p \sum_{I \subseteq [d]} \sum_{\mathcal{J} \in \mathcal{P}_I} p^{p|I^c| + p|\mathcal{J}|/2}\E_{I^c}\max_{\ii_{I^c} \in [n]^{I^c}}   \|(h_\ii)_{\ii_{I}}\|_\mathcal{J}^p.
\end{displaymath}

\end{theorem}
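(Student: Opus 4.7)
The plan is to prove the bound by induction on $d$, peeling off one sequence of variables at a time via Rosenthal's moment inequality applied conditionally.

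For the base case $d=1$, the statement is essentially the classical Rosenthal inequality for sums of independent random variables $Y_i = h_i(X^{(1)}_i)$:
\[
\Big\|\sum_i Y_i\Big\|_p \le C\Big(\sqrt{p}\Big(\sum_i \E Y_i^2\Big)^{1/2} + p\,\big\|\max_i|Y_i|\big\|_p\Big).
\]
The first summand equals $\sqrt{p}\,\|(h_i)_i\|_{\{\{1\}\}}$ (contribution of $I=\{1\}$, $\mathcal{J}=\{\{1\}\}$) and the second matches $p\cdot(\E\max_i|h_i(X^{(1)}_i)|^p)^{1/p}$ (contribution of $I=\emptyset$, $\mathcal{J}=\emptyset$).

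For the inductive step, I would condition on $(X^{(d)}_i)_{i\in [n]}$ and write $Z = \sum_{i_d} H_{i_d}(X^{(d)}_{i_d})$, where
\[
H_{i_d}(x) = \sum_{i_1,\ldots,i_{d-1}=1}^n h_{i_1,\ldots,i_{d-1},i_d}(X^{(1)}_{i_1},\ldots,X^{(d-1)}_{i_{d-1}},x)
\]
is a $(d-1)$-fold generalized $U$-statistic in the remaining variables. Rosenthal's inequality applied conditionally on $(X^{(d)}_i)$ yields
\[
\|Z\|_p \le C\Big(\sqrt{p}\,\Big\|\Big(\sum_{i_d}\E_d H_{i_d}^2\Big)^{1/2}\Big\|_p + p\,\big\|\max_{i_d}|H_{i_d}(X^{(d)}_{i_d})|\big\|_p\Big).
\]
In the variance term, dualizing the $\ell^2$-sum via
\[
\Big(\sum_{i_d}\|H_{i_d}\|_{L_2(X^{(d)}_{i_d})}^2\Big)^{1/2} = \sup_{\sum_{i_d}\E \varphi_{i_d}^2\le 1}\sum_{i_d}\E_d\big[\varphi_{i_d}(X^{(d)}_{i_d})H_{i_d}(X^{(d)}_{i_d})\big]
\]
turns the inner quantity into a $(d-1)$-fold $U$-statistic carrying an extra test-function index $i_d$; invoking the induction hypothesis then produces terms in which the index $d$ either forms its own block $\{d\}$ of the partition $\mathcal{J}$ on $I$ or is amalgamated into a larger block, contributing the $p^{1/2}$ factor together with an $L_2$-type aggregation. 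In the maximum term, the index $d$ is routed into $I^c$, contributing the factor $p$; for each fixed $i_d$ the inductive bound is applied to $H_{i_d}$ (now an ordinary $(d-1)$-fold sum), and then $\max_{i_d}\E_d$ matches the $\E_{I^c}\max_{\ii_{I^c}}\|(h_\ii)_{\ii_I}\|_\mathcal{J}^p$ form.

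The main obstacle is the combinatorial bookkeeping required to show that the partition norms arising after the recursion are dominated by the specific norms $\|(h_\ii)_{\ii_I}\|_\mathcal{J}$ of the statement. At each step the Cauchy--Schwarz dualization introduces a new $L_2$-constrained test function indexed by the peeled-off coordinate, and one has to verify that pairing it with the inductively obtained injective tensor-product norms produces a supremum over the correct normalized ball on $\prod_j L_2(\XX^{J_j})$, with no hidden loss. Aggregating over all possible $(I,\mathcal{J})$ --- equivalently, tracking at each step whether a coordinate is routed into $I^c$ (factor $p$, max over that index) or into a block of $\mathcal{J}$ (factor $p^{1/2}$, $L_2$-pairing) --- produces the complete double sum over $I\subseteq[d]$ and $\mathcal{J}\in\mathcal{P}_I$.
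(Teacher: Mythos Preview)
The paper does not prove this theorem at all: it is quoted verbatim from \cite[Theorem 6]{MR2294982} and used as a black box. So there is no ``paper's own proof'' to compare your sketch against.

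That said, your sketch has a genuine gap, and it is not the one you flag. After applying Rosenthal conditionally and dualizing the variance term, the quantity you must control is
\[
\Big\|\sup_{\varphi}\Big|\sum_{\ii_{[d-1]}} \widetilde h^{\varphi}_{\ii_{[d-1]}}\big(X^{(1)}_{i_1},\ldots,X^{(d-1)}_{i_{d-1}}\big)\Big|\Big\|_p,
\]
where the supremum runs over the unit ball of $\bigoplus_{i_d}L_2(X^{(d)}_{i_d})$. This is the $L_p$ norm of a supremum of $(d-1)$-fold $U$-statistics, not a single one. The induction hypothesis bounds $\|U_\varphi\|_p$ for each fixed $\varphi$, which only gives $\sup_\varphi\|U_\varphi\|_p$; the inequality $\|\sup_\varphi|U_\varphi|\|_p\le \sup_\varphi\|U_\varphi\|_p$ goes the wrong way. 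Equivalently, you are trying to bound a Hilbert-space-valued $(d-1)$-fold chaos by an inductive statement formulated for real-valued ones, and the passage from the vector-valued norm back to the scalar partition norms $\|\cdot\|_{\mathcal J}$ is exactly where the difficulty lies --- it is analytical, not bookkeeping.

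In \cite{MR2294982} this obstacle is overcome using Lata{\l}a's two-sided moment estimates for Gaussian polynomial chaoses \cite{MR2294983} as the key technical input (after symmetrization and Gaussian randomization), a fact the present paper explicitly acknowledges in its Remark~\ref{re:discussion-of-norms}. Your outline does not invoke any such tool, and Rosenthal plus induction alone is not known to suffice.
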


To prove Theorem \ref{thm:tails-and-moments-main} we will need the following lemma.

\begin{lemma}\label{le:special functions}
Let $\eeta^{(j)}$, $j=1,\ldots,d$, be i.i.d. copies of the Poisson process $\eeta$. Consider a function $g\colon \XX^d\to \R$ of the form
\begin{displaymath}
  g= \sum_{i_1,\ldots,i_d=1}^{n} b_{i_1,\ldots,i_d} \Ind{\prod_{j=1}^d B_{i_j}},
\end{displaymath}
where $B_1,\ldots,B_n\in \mathcal{X}$ are pairwise disjoint, $0< \lambda(B_i) < \infty$ and $b_{i_1,\ldots,i_d} \in \R$.
For $j \in [d], i \in [n]$ define $X^{(j)}_i = \eeta^{(j)}(B_i\times [0,T]) - T\lambda(B_i)$, and let
\begin{displaymath}
Z = \sum_{i_1,\ldots,i_d = 1}^{n}b_{i_1,\ldots,i_d}\prod_{j=1}^dX^{(j)}_{i_j}.
\end{displaymath}

Then, for any $p\ge 2$,
\begin{align}\label{eq:Z-estimate}
  \E|Z|^p \le C_d^p \sum_{I \subseteq [d]} T^{p|I|/2}\Big(\sum_{\mathcal{J} \in \mathcal{P}_I} p^{p|I^c| + p|\mathcal{J}|/2}\|\|g\|_\mathcal{J}\|_{\infty}^p\Big) \E_{I^c}\max_{\ii_{I^c}\in [n]^{I^c}}\prod_{j\in I^c} |X^{(j)}_{i_j}|^p.
\end{align}
\end{lemma}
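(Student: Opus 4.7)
The plan is to cast $Z$ as a generalized decoupled $U$-statistic in the independent centered variables $X^{(j)}_i$ (with $\Var X^{(j)}_i = T\lambda(B_i)$) with kernels $h_\ii(x_1,\ldots,x_d) = b_\ii \prod_{j=1}^d x_j$, and then invoke Theorem~\ref{thm:U-stat-decoupled}. That theorem bounds $\E|Z|^p$ by a constant depending on $d$ times $\sum_{I\subseteq[d]}\sum_{\mathcal{J}\in\mathcal{P}_I} p^{p|I^c|+p|\mathcal{J}|/2}\,\E_{I^c}\max_{\ii_{I^c}}\|(h_\ii)_{\ii_I}\|_\mathcal{J}^p$, so \eqref{eq:Z-estimate} reduces to the pointwise estimate
\[
\|(h_\ii)_{\ii_I}\|_{\mathcal{J}} \le T^{|I|/2}\,\|\|g\|_\mathcal{J}\|_\infty \prod_{j\in I^c}|X^{(j)}_{i_j}|.
\]

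To prove this estimate I fix $\mathcal{J}=\{J_1,\ldots,J_k\}$ and $\ii_{I^c}$ and expand the supremum defining the array-norm. The product $\prod_{j\in I^c}X^{(j)}_{i_j}$ factors out of the conditional expectation (its sign can be absorbed by flipping a $\varphi^{(s)}_{\ii_{J_s}}$), contributing $\prod_{j\in I^c}|X^{(j)}_{i_j}|$. By independence across the blocks of $\mathcal{J}$ the remaining expectation factorizes as $\prod_s \alpha^{(s)}_{\ii_{J_s}}$, where $\alpha^{(s)}_{\ii_{J_s}} = \E[\prod_{\ell\in J_s}X^{(\ell)}_{i_\ell}\,\varphi^{(s)}_{\ii_{J_s}}]$. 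Cauchy--Schwarz gives $|\alpha^{(s)}_{\ii_{J_s}}|\le \sqrt{T^{|J_s|}\prod_{\ell\in J_s}\lambda(B_{i_\ell})}\,\|\varphi^{(s)}_{\ii_{J_s}}\|_2$, so the normalized values $\beta^{(s)}_{\ii_{J_s}} := \alpha^{(s)}_{\ii_{J_s}}/\sqrt{T^{|J_s|}\prod_{\ell\in J_s}\lambda(B_{i_\ell})}$ obey $\sum_{\ii_{J_s}}(\beta^{(s)}_{\ii_{J_s}})^2 \le 1$ for each $s$.

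The crucial step is then to bound the reduced sum $S := \sum_{\ii_I}b_\ii\prod_{\ell\in I}\sqrt{\lambda(B_{i_\ell})}\prod_s\beta^{(s)}_{\ii_{J_s}}$ by $\|\|g\|_\mathcal{J}\|_\infty$. Here the pairwise disjointness of the $B_i$'s is essential: the step functions
\[
\phi_s(x_{J_s}) = \sum_{\ii_{J_s}}\beta^{(s)}_{\ii_{J_s}}\prod_{\ell\in J_s}\Ind{B_{i_\ell}}(x_\ell)/\sqrt{\lambda(B_{i_\ell})}
\]
have $L_2(\XX^{J_s})$-norm at most one, and when inserted into the integral defining $\|g\|_\mathcal{J}(x_{I^c})$ for any $x_{I^c}$ with $x_j\in B_{i_j}$ for $j\in I^c$ (a set of positive $\lambda^{\otimes|I^c|}$-measure, since each $\lambda(B_{i_j})>0$), the disjointness collapses all off-diagonal contributions and reproduces precisely $S$. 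Hence $S\le\|\|g\|_\mathcal{J}\|_\infty$, yielding the pointwise bound and, after raising to the $p$-th power, maximizing over $\ii_{I^c}$ and integrating, the desired inequality \eqref{eq:Z-estimate}.

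The main obstacle I foresee is the clean passage between the discrete partition-norm on the array $(h_\ii)_{\ii_I}$ (indexed by $\ell_2$-normalized coefficient families) and the continuous partition-norm on $g$ (indexed by $L_2$-normalized test functions). The bridge rests on two simultaneous features: the disjointness of the $B_i$'s, which lets the simple functions $\Ind{B_i}/\sqrt{\lambda(B_i)}$ realize any prescribed square-summable coefficients as an admissible test function in the continuous sup; and the pure-tensor form of $h_\ii$, which guarantees that the conditional expectation over the variables indexed by $I$ factorizes along the blocks of any $\mathcal{J}\in\mathcal{P}_I$. Keeping careful track of the factor $T^{|J_s|/2}$ produced by $\Var X^{(\ell)}_{i_\ell} = T\lambda(B_{i_\ell})$ in each block is the only delicate bookkeeping step.
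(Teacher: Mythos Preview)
Your proposal is correct and follows essentially the same route as the paper: apply Theorem~\ref{thm:U-stat-decoupled} to the kernels $h_\ii(x_1,\dots,x_d)=b_\ii\prod_j x_j$, factor out $\prod_{j\in I^c}X^{(j)}_{i_j}$, and then convert the remaining discrete supremum into a continuous one by building test functions $\phi_s\in L_2(\XX^{J_s})$ out of the normalized indicators $\Ind{B_i}/\sqrt{\lambda(B_i)}$. The paper constructs these test functions (denoted $f_j$ there) directly from the $\varphi^{(j)}$ without first naming the intermediate coefficients $\alpha^{(s)},\beta^{(s)}$, but your $\phi_s$ and the paper's $f_j$ are literally the same functions, and the Cauchy--Schwarz step verifying $\|\phi_s\|_2\le 1$ is identical.
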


\begin{proof}
We may rewrite $Z$ as
\begin{displaymath}
Z = \sum_{i_1,\ldots,i_d=1}^{n} h_{i_1,\ldots,i_d}(X^{(1)}_{i_1},\ldots,X^{(d)}_{i_d}),
\end{displaymath}
with $h_{i_1,\ldots,i_d}(x_1,\ldots,x_d) = b_{i_1,\ldots,i_d}x_{1}\cdots x_d$.

Applying Theorem \ref{thm:U-stat-decoupled} we thus obtain
\begin{align}\label{eq:application-of-Ustat-bound}
  \E|Z|^p \le C_d^p \sum_{I \subseteq [d]} \sum_{\mathcal{J} \in \mathcal{P}_I} p^{p|I^c| + p|\mathcal{J}|/2}\E_{I^c}\max_{\ii_{I^c} \in [n]^{I^c}}   \|(h_\ii)_{\ii_{I}}\|_\mathcal{J}^p.
\end{align}

Let us fix $I$, $\mathcal{J}=\{J_1,\ldots,J_k\} \in \mathcal{P}_I$ and $\ii_{I^c}$ and examine the random variable $\|(h_\ii)_{\ii_{I}}\|_\mathcal{J}$.
We have
\begin{displaymath}
\|(h_\ii)_{\ii_{I}}\|_\mathcal{J} = \sup\Big\{\sum_{\ii_I \in [n]^I}\E_{I}b_{i_1,\ldots,i_d} X^{(1)}_{i_1}\cdots X^{(d)}_{i_d}\prod_{j=1}^k  \varphi^{(j)}_{\ii_{J_j}}((X^{(\ell)}_{i_\ell})_{\ell\in J_j})\Big\},
\end{displaymath}
where the supremum is taken over Borel functions   $\varphi^{(j)}_{\ii_{J_j}}\colon \R^{J_j} \to \R$,  $j=1,\ldots,k$, $\ii_{J_j} \in [n]^{J_j}$, such that
  $\E \sum_{i_{J_j}\in [n]^{J_j}} (\varphi^{(j)}_{\ii_{J_j}}((X^{(\ell)}_{i_\ell})_{\ell\in J_j}))^2 \le 1$. The expression under the supremum can be written as
\begin{multline*}
 T^{|I|/2} \Big(\prod_{j\in I^c} X^{(j)}_{i_j}\Big) \int_{\XX^{I}} g(x_{I^c},x_I) \prod_{j=1}^k f_j(x_{{J}_j})d\lambda^{\otimes |I|}(x_I),
\end{multline*}
where $x_{I^c}$ is any element of the set $\prod_{j\in I^c} B_{i_j}$ and $f_j\colon \XX^{J_j}\to \R$, $j \in [k]$, are given by
\begin{displaymath}
  f_j(x_{J_j}) = \sum_{\ii_{J_j} \in [n]^{J_j}}\frac{1}{T^{|J_j|/2}\prod_{r \in J_j}\lambda(B_{i_r})} \Big(\E_I \varphi^{(j)}_{\ii_{J_j}}((X^{(\ell)}_{i_\ell})_{\ell\in J_j})\prod_{r\in J_j}X^{(r)}_{i_r}\Big)\Ind{\prod_{r\in J_j}B_{i_r}}(x_{J_j}).
\end{displaymath}
Observe that by the Cauchy--Schwarz inequality, independence of $X^{(r)}_{i_r}$ across  distinct $r$, and the equality $\E (X^{(r)}_{i_r})^2 = T\lambda(B_{i_r})$,
\begin{multline*}
  \int_{\XX^{J_j}} f_j(x_{J_j})^2 d\lambda^{\otimes |J_j|}(x_{J_j}) \\=
  \sum_{\ii_{J_j} \in [n]^{J_j}}\frac{1}{T^{|J_j|}\prod_{r \in J_j}\lambda(B_{i_r})^2} \Big(\E_I \varphi^{(j)}_{\ii_{J_j}}((X^{(\ell)}_{i_\ell})_{\ell\in J_j})\prod_{r\in J_j}X^{(r)}_{i_r}\Big)^2\prod_{r\in J_j}\lambda(B_{i_r})\\
  \le \sum_{\ii_{J_j} \in [n]^{J_j}}\E_I (\varphi^{(j)}_{\ii_{J_j}}((X^{(\ell)}_{i_\ell})_{\ell\in J_j}))^2  \le 1.
\end{multline*}
Thus (using that in the case $I \neq [d]$, $\prod_{j\in I^c} B_{i_J}$ is of positive measure), we obtain
\begin{displaymath}
  \|(h_\ii)_{\ii_{I}}\|_\mathcal{J} \le  T^{|I|/2} \Big|\prod_{j\in I^c} X^{(j)}_{i_j}\Big| \|g(x_{I^c},\cdot)\|_\mathcal{J} \le T^{|I|/2} \Big|\prod_{j\in I^c} X^{(j)}_{i_j}\Big| \|\|g\|_\mathcal{J}\|_{\infty}.
\end{displaymath}
Going back to \eqref{eq:application-of-Ustat-bound}, we obtain \eqref{eq:Z-estimate}
\end{proof}

\begin{proof}[Proof of Theorem \ref{thm:tails-and-moments-main}]

Recall that we assume (without loss of generality) that $\lambda$ is non-atomic.

We may assume that for all $\mathcal{J}$, the norms $\|\|g\|_\mathcal{J}\|_\infty$ are finite. By the second part of Lemma \ref{le:approximation}, there exists a sequence of functions $g_m \in \mathcal{E}^s_d$, converging to $g$ in $L_2(\XX^d)$ and such that for all $I \subseteq [d]$ and $\mathcal{J} \in \mathcal{P}_I$, $\|\|g_m\|_\mathcal{J}\|_{\infty} \le 3\|\|g\|_\mathcal{J}\|_{\infty}$. Thus, for each $m$,
\begin{align*}
 \sum_{i_1,...,i_d=1}^{R_m} a^{(m)}_{i_1,...,i_d} \cdot \Ind{ \prod_{j=1}^d A^{(m)}_{i_j}},
\end{align*}
where
\begin{itemize}
  \item[(i)] $R_m \in \N$,
  \item[(ii)] $A^{(m)}_i$, $i \le R_m$, are pairwise disjoint and $0< \lambda(A^{(m)}_i) < \infty$,
  \item [(iii)]  $a^{(m)}_{i_1,...,i_d}=0$ whenever $i_k = i_j$ for some $k \neq j$,
  \item[(iv)] the coefficients $a^{(m)}_{i_1,...,i_d}$ are symmetric under permutation of indices.
\end{itemize}

By \eqref{eq:step-function-integral} we have
\begin{align}\label{eq:U-stat-representation-1}
  I^{(d)}_T(g_m) = \sum_{i_1,\ldots,i_d = 1}^{R_m} a^{(m)}_{i_1,\ldots,i_d} \prod_{j=1}^d\Big(\eeta(A^{(m)}_{i_j}\times[0,T]) - T\lambda(A^{(m)}_{i_j})\Big).
\end{align}

The $L_2(\XX^d)$-convergence of $g_m$ to $g$ implies that $I^{(d)}_T(g_m)$ converges in $L_2$ to $I^{(d)}_T(g)$. By passing to a subsequence we may assume that we also have the almost sure convergence.

Note that \eqref{eq:U-stat-representation-1} can be written as
\begin{displaymath}
  I^{(d)}_T(g_m) = \sum_{i_1,\ldots,i_d = 1}^{R_m}h^{(m)}_{i_1,\ldots,i_d}(X_{i_1},\ldots,X_{i_d}),
\end{displaymath}
where $X_i = \eeta(A_i\times[0,T]) - T\lambda(A_i)$, $i = 1,\ldots,R_m$ is a sequence of independent random variables and
\begin{displaymath}
  h_{i_1,\ldots,i_d}(x_1,\ldots,x_d) = a^{(m)}_{i_1,\ldots,i_d}x_{i_1}\cdots x_{i_d}.
\end{displaymath}
Thus, by Theorem \ref{thm:decoupling}, if $\eeta^{(j)}$, $j=1,\ldots,d$ are i.i.d. copies of the Poisson process $\eeta$, then for any $p \ge 2$,
\begin{align}\label{eq:U-stat-estimate}
  \E|I^{(d)}_T(g_m)|^p \le C_d^p \E|Z_m|^p,
\end{align}
where
\begin{displaymath}
  Z_m = \sum_{i_1,\ldots,i_d = 1}^{R_m}a^{(m)}_{i_1,\ldots,i_d}\prod_{j=1}^d\Big(\eeta^{(j)} (A^{(m)}_{i_j}\times[0,T]) - T\lambda(A^{(m)}_{i_j})\Big)
\end{displaymath}
(note that we use the fact that the coefficients $a^{(m)}_{i_1,\ldots,i_d}$ vanish outside the generalized diagonals).

We will now proceed similarly as in the proof of Lemma \ref{le:maximal-inequality}, but in the `spatial domain' and not in the `time domain', i.e., we will take advantage of the non-atomicity of the measure $\lambda$ and split the sets $A^{(m)}_i$ into disjoint subsets of small measure. For now we fix $m$, and so we will sometimes suppress the dependence on this parameter in the notation and write, e.g., $A_i$ instead of $A^{(m)}_i$ and $a_{i_1,\ldots,i_d}$ instead of $a^{(m)}_{i_1,\ldots,i_d}$.

Set $A = \bigcup_{i=1}^{R_m} A_i$, $\Lambda = \lambda(A) < \infty$. By the well known property of non-atomic measures, for every $x \in (0,\lambda(A_i))$ there exists measurable $B \subseteq A_i$ such that $\lambda(B) = x$. Thus, for each positive integer $N$, we can split
each of the sets $A_i$ into $N^2$ pairwise disjoint sets of measure at most $\lambda(A_i)/N^2 \le \Lambda/N^2$. Together with additivity of the measures $\eeta$ and $\lambda$, this allows us to represent the function $g_m$ and the generalized $U$-statistic $Z_m$ on the right-hand side of \eqref{eq:U-stat-estimate} as
\begin{displaymath}
  g_m = \sum_{i_1,\ldots,i_d=1}^{N^2R_m} b^{(N)}_{i_1,\ldots,i_d} \Ind{\prod_{j=1}^d B^{(N)}_{i_j}}
\end{displaymath}
and
\begin{displaymath}
Z_m = \sum_{i_1,\ldots,i_d = 1}^{N^2 R_m}b^{(N)}_{i_1,\ldots,i_d}\prod_{j=1}^d\Big(\eeta^{(j)} (B^{(N)}_{i_j}\times[0,T]) - T\lambda(B^{(N)}_{i_j})\Big),
\end{displaymath}
where $B^{(N)}_{i}$, $i\in[N^2R_m]$ are pairwise disjoint and $0< \lambda(B^{(N)}_i) \le \Lambda/N^2$. More precisely, $\{B^{(N)}_i\colon i \in [N^2R_m]\}$ is a refinement of the partition $\{A_i\colon i \in [R_m]\}$ and $b^{(N)}_{i_1,\ldots,i_d} = a_{\ell_1,\ldots,\ell_d}$, where $\ell_j$ is uniquely determined by the condition $B^{(N)}_{i_j} \subseteq A_{\ell_j}$.

Lemma \ref{le:special functions} with $n = N^2R^m$ gives
\begin{align}\label{eq:application-of-Ustat-bound-1}
  \E|Z_m|^p \le C_d^p \sum_{I \subseteq [d]} T^{p|I|/2}\Big(\sum_{\mathcal{J} \in \mathcal{P}_I} p^{p|I^c| + p|\mathcal{J}|/2}\|\|g_m\|_\mathcal{J}\|_{\infty}^p\Big) \E_{I^c}\max_{\ii_{I^c} \in [N^2R_m]^{I^c}}\prod_{j\in I^c} |X^{(N,j)}_{i_j}|^p,
\end{align}
where for $j\in [d]$, $i\in [N^2R_m]$,
\begin{displaymath}
X^{(N,j)}_i = \eeta^{(j)}(B^{(N)}_i\times [0,T]) - T\lambda(B^{(N)}_i).
\end{displaymath}

Since $\lambda(B^{(N)}_i) \le \Lambda/N^2$, for large $N$ we have
\begin{displaymath}
\p(|X^{(N,j)}_i| \ge 2) \le\p(\eeta^{(j)}(B^{(N)}_i\times [0,T]) \ge 2) \le (\lambda(B^{(N)}_i)T)^2 \le \Lambda^2T^2/N^4.
\end{displaymath}
Thus, by the union bound,
\begin{displaymath}
  \p(\exists_{j \in [d], i \in [N^2R_m]} |X^{(N,j)}_i| \ge 2) \le d N^2R_m \Lambda^2T^2/N^4 = dR_m \Lambda^2T^2/N^2.
\end{displaymath}

By the Borel-Cantelli Lemma, $\limsup_{N\to \infty} \max_{\ii_{I^c} \in [N^2R_m]^{I^c}}\prod_{j\in I^c} |X^{(N,j)}_{i_j}|^p \le 2^{pd}$ for all subsets $I \subsetneq [d]$.
Moreover, since
\begin{displaymath}
  \max_{\ii_{I^c} \in [N^2R_m]^{I^c}}\prod_{j\in I^c} |X^{(N,j)}_{i_j}| \le \prod_{j\in I^c} (\eeta^{(j)}(A)+ T\Lambda)
\end{displaymath}
and the right-hand side has all moments finite, by the Fatou lemma we have
\begin{displaymath}
  \limsup_{N\to \infty} \E \max_{\ii_{I^c} \in [N^2R_m]^{I^c}}\prod_{j\in I^c} |X^{(N,j)}_{i_j}|^p \le \E \limsup_{N\to \infty}  \max_{\ii_{I^c} \in [N^2R_m]^{I^c}}\prod_{j\in I^c} |X^{(N,j)}_{i_j}|^p \le 2^{pd}.
\end{displaymath}

Thus, passing with $N$ to infinity in \eqref{eq:application-of-Ustat-bound-1} and adjusting the constants, we obtain
\begin{displaymath}
  \E|Z_m|^p \le C_d^p \sum_{I \subseteq [d]} T^{p|I|/2}\sum_{\mathcal{J} \in \mathcal{P}_I} p^{p|I^c| + p|\mathcal{J}|/2}\|\|g_m\|_\mathcal{J}\|_{\infty}^p
\end{displaymath}
and so, by \eqref{eq:U-stat-estimate} and by the assumption $\|\|g_m\|_\mathcal{J}\|_{\infty} \le 3\|\|g\|_\mathcal{J}\|_{\infty}$,
\begin{displaymath}
  \E|I^{(d)}_T(g_m)|^p  \le C_d^p \sum_{I \subseteq [d]} T^{p|I|/2}\sum_{\mathcal{J} \in \mathcal{P}_I} p^{p|I^c| + p|\mathcal{J}|/2}\|\|g\|_\mathcal{J}\|_{\infty}^p.
\end{displaymath}

Using now the almost sure convergence of $g_m$ to $g$, and again the Fatou lemma, we obtain
\begin{multline*}
  \E|I^{(d)}_T(g)|^p  \le C_d^p \sum_{I \subseteq [d]} T^{p|I|/2}\sum_{\mathcal{J} \in \mathcal{P}_I} p^{p|I^c| + p|\mathcal{J}|/2}\|\|g\|_\mathcal{J}\|_{\infty}^p\\
  \le \widetilde{C}_d^p\sum_{k=0}^d \sum_{\mathcal{J} \in \PP_{\{k+1,\ldots,d\}}} p^{pk + p|\mathcal{J}|/2} T^{p(d-k)/2} \Big\| \|g\|_{\mathcal{J}}\Big\|_\infty^p,
\end{multline*}
where in the last inequality we used the symmetry of $g$ to collect together equal terms corresponding to partitions with the same `shape'.
The rest of the proof is standard and we skip the computations, just indicating the main points of the argument. Our goal is to prove \eqref{eq:moment-main} and \eqref{eq:tail-main}.

First, using Chebyshev's inequality in $L_p$ and optimizing in $p$, one obtains
\begin{displaymath}
  \p\Big(|I_T^{(d)}(g)| \ge u\Big) \le 2\exp\Big(-c_d\min_{0\le k \le d} \min_{\mathcal{J} \in \PP_{\{k+1,\ldots,d\}}} \Big(\frac{u}{T^{(d-k)/2}\|\|g\|_{\mathcal{J}}\|_\infty}\Big)^{\frac{2}{2k+|\mathcal{J}|}}\Big).
\end{displaymath}
Next, using Lemma \ref{le:maximal-inequality} one replaces the left-hand side of the above inequality by the left-hand side of \eqref{eq:tail-main}. This results in replacing the 2 on the right-hand side by $2C_d$, but adjustment of the constant $c_d$ leads to \eqref{eq:tail-main}.

Finally, the inequality \eqref{eq:moment-main} follows from \eqref{eq:tail-main} via integration by parts.
\end{proof}

\begin{proof}[Proof of Corollary \ref{cor:simplified-estimate}]
One uses Theorem \ref{thm:tails-and-moments-main} and observes that
\begin{displaymath}
B_k = \|\|g\|_{\{k+1,\ldots,d\}}\|_\infty \ge \|\|g\|_\mathcal{J}\|_\infty
\end{displaymath}
for all $\mathcal{J} \in \mathcal{P}_{\{k+1,\ldots,d\}}$.
The proof is then concluded by adjusting the constants.
\end{proof}

\begin{proof}[Proof of Corollary \ref{cor:U-stat-tail}]
The corollary follows from the chaos expansion \eqref{eq:U-stat-chaos-expansion} combined with  the triangle inequality, the union bound, Theorem \ref{thm:tails-and-moments-main} applied to each term in the chaos expansion, and adjustment of constants.
\end{proof}
\subsection{Auxiliary polynomial tail estimates}

In this section we will prove some additional tail estimates for stochastic integrals $I^{(d)}_T$ to be applied in situations when the quantities appearing in Theorem \ref{thm:tails-and-moments-main} are infinite. They will be used in the proof of Theorem \ref{thm:LIL-integrals} to perform initial truncations, enabling us to use the exponential inequalities.

For a measurable function $g \colon \XX^d \to \R$ and $I \subseteq [d]$, we denote
\begin{displaymath}
\|g\|_{L_1(\XX^{I^c},L_2(\XX^I))} = \int_{\XX^{I^c}} \sqrt{\int_{\XX^I} g^2(x_1,...,x_d) d\lambda^{\otimes |I|}(x_I)}\ d\lambda^{\otimes |I^c|}(x_{I^c}).
\end{displaymath}

\begin{prop}\label{prop:Czebyszew} Let $g \in L_2(\XX^d)$ and $I \subseteq [d]$. For any $T > 0$ and $u > 0$,
\begin{align}\label{eq:polynomial-estimate}
 \p(\sup_{t \le T}|I_t^{(d)}(g)| > u) \le \frac{C_d}{u} T^{d-|I|/2} \|g\|_{L_1(\XX^{I^c},L_2(\XX^I))}.
 \end{align}
\end{prop}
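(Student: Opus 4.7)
The plan is to combine Lemma~\ref{le:maximal-inequality} with Markov's inequality, which reduces \eqref{eq:polynomial-estimate} to establishing the mean estimate
\begin{equation*}
\E|I_T^{(d)}(g)|\le C_d\,T^{d-|I|/2}\|g\|_{L_1(\XX^{I^c},L_2(\XX^I))}.
\end{equation*}
One may assume the right-hand side is finite, as otherwise the inequality is trivial.

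For step functions $g\in\mathcal{E}_d^s$ I would follow the discretization-and-decoupling strategy of Lemma~\ref{le:maximal-inequality}: the first decoupling inequality of Theorem~\ref{thm:decoupling}, applied with $E=\R$ and $\Phi(x)=|x|$, gives $\E|I_T^{(d)}(g)|\le C_d\E|Z|$, where $Z=\sum_{\ii}a_\ii\prod_{j=1}^d X^{(j)}_{i_j}$, $X^{(j)}_i=\eeta^{(j)}(A_i\times[0,T])-T\lambda(A_i)$, and $\eeta^{(1)},\ldots,\eeta^{(d)}$ are independent copies of $\eeta$. Writing $\eta_T^{(j)}$ for the spatial projection of $\eeta^{(j)}$ up to time $T$, so that $\eta_T^{(1)},\ldots,\eta_T^{(d)}$ are independent copies of $\eta_T$, the advantage of $Z$ is that different coordinate directions now live on independent Poisson processes, which legitimizes the following Fubini decomposition. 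Setting
\begin{equation*}
F_T(x_{I^c})=\int_{\XX^I}g(x_I,x_{I^c})\bigotimes_{j\in I}d(\eta_T^{(j)}-T\lambda)(x_j),
\end{equation*}
one obtains the iterated representation $Z=\int_{\XX^{I^c}}F_T(x_{I^c})\bigotimes_{j\in I^c}d(\eta_T^{(j)}-T\lambda)(x_j)$. Orthogonality of centered Poisson variables over disjoint sets yields the key $L_2$-isometry $\E F_T(x_{I^c})^2=T^{|I|}\|g(\cdot,x_{I^c})\|_{L_2(\XX^I)}^2$, hence by Cauchy--Schwarz $\E|F_T(x_{I^c})|\le T^{|I|/2}\|g(\cdot,x_{I^c})\|_{L_2(\XX^I)}$.

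To finish, I would expand each outer compensated measure as $d(\eta_T^{(j)}-T\lambda)=d\eta_T^{(j)}-T\,d\lambda$ for $j\in I^c$ and apply the triangle inequality to obtain
\begin{equation*}
\E|Z|\le\sum_{S\subseteq I^c}T^{|I^c\setminus S|}\,\E\int|F_T(x_{I^c})|\prod_{j\in S}d\eta_T^{(j)}(x_j)\prod_{j\in I^c\setminus S}d\lambda(x_j).
\end{equation*}
For each $S$, the independence between $(\eta_T^{(j)})_{j\in I^c}$ and $(\eta_T^{(j)})_{j\in I}$, together with Campbell's formula applied to each independent $\eta_T^{(j)}$, $j\in S$, turns the expected integral into $T^{|S|}\int_{\XX^{I^c}}\E|F_T(x_{I^c})|\,d\lambda^{\otimes|I^c|}(x_{I^c})\le T^{|S|+|I|/2}\|g\|_{L_1(\XX^{I^c},L_2(\XX^I))}$, so each summand is majorized by $T^{|I^c|+|I|/2}\|g\|_{L_1(\XX^{I^c},L_2(\XX^I))}=T^{d-|I|/2}\|g\|_{L_1(\XX^{I^c},L_2(\XX^I))}$; the $2^{|I^c|}$ combinatorial factor is then absorbed into $C_d$. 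The extension from step functions to general $g\in L_2(\XX^d)$ proceeds by approximation in the spirit of Lemma~\ref{le:approximation}. The main obstacle I anticipate is the clean interpretation of the iterated decomposition of $Z$ and the simultaneous approximation of $g$ in both the $L_2(\XX^d)$ and $L_1(\XX^{I^c},L_2(\XX^I))$ norms: both are transparent on step functions, where everything reduces to finite sums of products of independent centered Poisson variables.
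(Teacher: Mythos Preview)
Your proposal is correct and follows essentially the same route as the paper: reduce to a fixed time via the maximal inequality (after symmetrizing $g$, since Lemma~\ref{le:maximal-inequality} is stated for $L_2^s$), decouple, split the decoupled sum over $I$ and $I^c$, apply Cauchy--Schwarz on the $I$-part (the $L_2$-isometry), and a crude first-moment bound on the $I^c$-part. The only cosmetic difference is that where you expand each $d(\eta_T^{(j)}-T\lambda)$ and invoke Campbell's formula, the paper works directly with the step-function indices and uses $\E|X^{(\ell)}_{i_\ell}|\le 2T\lambda(A_{i_\ell})$; regarding the approximation obstacle you flag, the paper resolves it by first truncating $g$ to $A_m^d$ with $\lambda(A_m)<\infty$ (where $\|\cdot\|_{L_1(\XX^{I^c},L_2(\XX^I))}\le\lambda(A_m)^{(d-|I|)/2}\|\cdot\|_2$ by Cauchy--Schwarz) and passing to the limit via Fatou.
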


\begin{remark} Note that in the above theorem we do not assume symmetry of the function $g$.
\end{remark}
\begin{proof}
Thanks to \eqref{eq:integral-non-symmetric-symmetric} and Lemma \ref{le:maximal-inequality} it is enough to prove \eqref{eq:polynomial-estimate} with the left-hand side replaced by $\p(|I^{(d)}_T| > u)$.

Consider an ascending sequence of sets $A_m$ such that $\lambda(A_m) < \infty$ and $\bigcup_m A_m = \XX$. Define $g_m = g\Ind{A_m^d}$. Then
\begin{displaymath}
  \|g_m\|_{L_1(\XX^{I^c},L_2(\XX^I))} \le \|g\|_{L_1(\XX^{I^c},L_2(\XX^I))}
\end{displaymath}
and $g_m \to g$ in $L_2^s(\XX^d)$. In particular, $I^{(d)}_T(g_m) \to I^{(d)}_T(g)$ in $L_2$. Thus, by Fatou's lemma we may assume that $g$ is supported on $A^d$ for some set $A$ such that $\lambda(A) < \infty$.

Note that for $g,h$ of this form, by the Cauchy--Schwarz inequality
\begin{displaymath}
  \|g-h\|_{L_1(\XX^{I^c},L_2(\XX^I))} \le \lambda(A)^{(d-|I|)/2} \|g-h\|_2,
\end{displaymath}
so we may use Lemma \ref{le:approximation} to further reduce the proposition to the case of $g \in \mathcal{E}_d$, of the form \eqref{eq:step-function}, satisfying the conditions (i)--(iii) stated after this equation.

For such functions we can use decoupling, as in the proof of Theorem \ref{thm:tails-and-moments-main}, to obtain
\begin{displaymath}
  \p(|I^{(d)}_T(g)| > u) \le C_d \p(|Z| > u/C_d),
\end{displaymath}
where
\begin{displaymath}
  Z = \sum_{i_1,...,i_d=1}^R a_{i_1,...,i_d} \prod_{j=1}^d \Big(\eeta^{(j)}(A_{i_j}\times[0,T]) - T\lambda(A_{i_j})\Big)
\end{displaymath}
and $\eeta^{(j)}$, $j=1,\ldots,d$ are i.i.d. copies of $\eeta$. (Note that we use here the part of Theorem \ref{thm:decoupling} which does not require symmetry).

Now we can simply use the Chebyshev inequality to obtain (denoting integration with respect to $\{\eta^{(j)}\}_{j\in I}$ by $\E_I$)

\begin{align*}
  C_d &\p(|Z| > u/C_d) \le \frac{C_d^2}{u} \E|Z| = \frac{C_d^2}{u} \E\Big|\sum_{i_1,...,i_d=1}^R a_{i_1,...,i_d} \prod_{j=1}^d \Big(\eeta^{(j)}(A_{i_j}\times[0,T]) - T\lambda(A_{i_j})\Big)\Big|\\
  &= \frac{C_d^2}{u} \E_{I^c}\E_{I}\Big|\sum_{i_1,...,i_d=1}^R a_{i_1,...,i_d} \prod_{j=1}^d \Big(\eeta^{(j)}(A_{i_j}\times[0,T]) - T\lambda(A_{i_j})\Big)\Big|\\
  &\le \frac{C_d^2}{u} \E_{I^c}\sum_{\ii_{I^c} \in [R]^{I^c}}\E_{I}\Big|\sum_{\ii_I \in [R]^I} a_{i_1,...,i_d} \prod_{j=1}^d \Big(\eeta^{(j)}(A_{i_j}\times[0,T]) - T\lambda(A_{i_j})\Big)\Big|\\
  &\le \frac{C_d^2}{u}\E_{I^c}\sum_{\ii_{I^c} \in [R]^{I^c}}  \Big(\E_{I}\Big|\sum_{\ii_I \in [R]^I} a_{i_1,...,i_d} \prod_{j=1}^d \Big(\eeta^{(j)}(A_{i_j}\times[0,T]) - T\lambda(A_{i_j})\Big)\Big|^2\Big)^{1/2}\\
  &= \frac{C_d^2}{u}\sum_{\ii_{I^c} \in [R]^{I^c}}  \Big(\sum_{\ii_I \in [R]^I} a^2_{i_1,...,i_d} T^{|I|} \prod_{j\in I} \lambda(A_{i_j})\Big)^{1/2}\prod_{\ell\in I^c} \E|\eeta(A_{i_\ell}\times[0,T]) - T\lambda(A_{i_\ell})|\\
  &\le \frac{2^{d-|I|}C_d^2T^{d-|I|/2}}{u}\sum_{\ii_{I^c} \in [R]^{I^c}}  \Big(\sum_{\ii_I \in [R]^I} a^2_{i_1,...,i_d} \prod_{j\in I} \lambda(A_{i_j})\Big)^{1/2}\prod_{\ell \in I^c} \lambda(A_{i_\ell}) \\
  & = \frac{2^{d-|I|}C_d^2T^{d-|I|/2}}{u} \|g\|_{L_1(\XX^{I^c},L_2(\XX^I))},
\end{align*}
where the third equality follows from the fact that the coefficients $a_{i_1,\ldots,i_d}$ vanish on generalized diagonals, independence and the identities $\E \eeta(A_i\times[0,T]) = \Var(\eeta(A_i\times[0,T])) = T\lambda(A_i)$, while the last inequality from the estimate
\begin{displaymath}
\E|\eeta(A_i\times[0,T]) - T\lambda(A_i)| = \E|\eeta(A_i\times[0,T]) - \E \eeta(A_i\times[0,T])| \le 2\E \eeta(A_i\times[0,T]) = 2T\lambda(A_i).
\end{displaymath}
The proposition follows now by adjusting the constants.
\end{proof}

\subsection{Proof of the laws of the iterated logarithm}\label{sec:lil-proofs}

In this section we will prove Theorem \ref{thm:LIL-integrals}. As explained in Section \ref{sec:LIL}, Corollary \ref{cor:Ustat-LIL} is its easy consequence. We will need the following lemma.

\begin{lemma}\label{le:series}

For any $\alpha > 0$ there exists a constant $C_\alpha$ such that for every $x > 0$,
\[ \sum_{n=3}^\infty \Big( \frac{2^n}{\log n}\Big)^\alpha  \ind{ 2^n \le x^{\frac{1}{\alpha}}\log n } \le C_\alpha x.\]

\end{lemma}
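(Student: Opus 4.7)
The plan is to reformulate the summation as a geometric-type sum. Set $a_n := (2^n/\log n)^\alpha$ for $n \ge 3$. The condition $2^n \le x^{1/\alpha}\log n$ is equivalent to $a_n \le x$, so the quantity to bound is
\[
  S(x) \;=\; \sum_{n \ge 3,\; a_n \le x} a_n .
\]

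The key step is to show that $(a_n)_{n\ge 3}$ increases at least geometrically with a ratio bounded away from $1$ by a constant depending only on $\alpha$. Indeed,
\[
  \frac{a_{n+1}}{a_n} \;=\; \Big( 2 \cdot \frac{\log n}{\log(n+1)}\Big)^{\!\alpha},
\]
and since $n\mapsto \log n/\log(n+1)$ is nondecreasing on $n \ge 3$, this ratio is bounded below by $\rho := (2\log 3/\log 4)^\alpha = (\log 3/\log 2)^\alpha$, which is strictly greater than $1$ (as $\log 3 > \log 2$) and depends only on $\alpha$. In particular, $(a_n)$ is strictly increasing.

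Consequently, the set $\{n \ge 3 : a_n \le x\}$ is either empty (in which case $S(x)=0$) or equals $\{3,4,\dots,N\}$ for some finite $N=N(x)$, with $a_N \le x$. Using the geometric lower bound $a_N/a_{N-j} \ge \rho^{\,j}$ we get
\[
  S(x) \;=\; \sum_{j=0}^{N-3} a_{N-j} \;\le\; a_N \sum_{j=0}^{\infty} \rho^{-j} \;=\; \frac{\rho}{\rho-1}\, a_N \;\le\; \frac{\rho}{\rho-1}\, x,
\]
so the lemma holds with $C_\alpha = \rho/(\rho-1)$. There is no real obstacle here; the only mild point is verifying the inequality $\rho > 1$, which reduces to the elementary $\log 3 > \log 2$ and makes the constant explicit in terms of $\alpha$.
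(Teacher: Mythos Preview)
Your proof is correct and follows essentially the same approach as the paper: setting $a_n=(2^n/\log n)^\alpha$, rewriting the condition as $a_n\le x$, and exploiting the geometric growth of $(a_n)$ to bound the sum by a constant times its largest term. The one difference is that the paper only uses $a_n/a_{n+1}\to 2^{-\alpha}$ to obtain a uniform ratio bound beyond some threshold $N_\alpha$, and handles the finitely many terms $n\le N_\alpha$ separately by the crude bound $a_n\le x$; you instead observe that $n\mapsto \log n/\log(n+1)$ is nondecreasing, so $a_{n+1}/a_n\ge (\log 3/\log 2)^\alpha>1$ for all $n\ge 3$, which avoids the split and yields the explicit constant $C_\alpha=\rho/(\rho-1)$.
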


\begin{proof}
Denote $a_n = \big(\frac{2^n}{\log n }\big)^\alpha$, $n \ge 3$. If $x < \min_n a_n$ there is nothing to prove. Otherwise, consider maximal $m$ such that $a_m \le x$. Since $\frac{a_n}{a_{n+1}} = 2^{-\alpha} \frac{\log(n+1)}{\log n} \to 2^{-\alpha}$ as $n \to \infty$, we can find $N_\alpha$ such that $\frac{a_n}{a_{n+1}} < 2^{-\frac{\alpha}{2}} < 1$ for $n > N_\alpha$. Hence,
\begin{multline*}
\sum_{n=3}^\infty a_n \ind{a_n \le x} \le \sum_{n=3}^{N_\alpha} a_n \ind{a_n \le x} + \sum_{n=N_\alpha+1}^m a_n \\
\le N_\alpha \cdot x + \sum_{n=N_\alpha+1}^m (2^{-\frac{\alpha}{2}})^{m-n} a_m \le N_\alpha \cdot x + \frac{a_m}{1-2^{-\frac{\alpha}{2}}} \le C_\alpha\cdot x.
\end{multline*}
\end{proof}

\begin{proof}[Proof of Theorem \ref{thm:LIL-integrals}]
We recall that by $C_d$ we denote constants which depend only on $d$, which may vary between occurrences. We will first prove the bounded LIL, i.e., that for any $g \in L_2^s(\XX^d)$, with probability one,
\begin{align}\label{eq:bounded-LIL}
    \limsup_{t\to \infty} \frac{|I^{(d)}_t(g)|}{(t\log\log t)^{d/2}} \le C_d \|g\|_2.
\end{align}

By homogeneity we may assume that $\|g\|_{L_2(\XX^d)} = 1$. For any subset $I \subset \{1,...,d\}$ consider the function $h_I \colon \XX^{I^c} \to [0,\infty]$, given by
\begin{displaymath}
h_I^2(x_{I^c}) := \int_{\XX^{I}} g^2(x_1,...,x_d)d\lambda^{\otimes |I|}(x_I).
\end{displaymath}
For $I \subsetneq [d]$,  and $n \ge 2$, consider the sets
\begin{displaymath}
E_{I,n} = \Big\{x\in \XX^d \colon h_I^2(x_{I^c}) \le \Big( \frac{2^n}{\log n}\Big)^{d-|I|}\Big\}
\end{displaymath}
and let
\begin{displaymath}
A_n = \bigcap_{I \subsetneq [d]} E_{I,n}.
\end{displaymath}

Define $\widetilde{g}_n = g\Ind{A_n}$, $\widehat{g}_n = g\Ind{A_n^c}$. Note that by symmetry of $g$, the function $\Ind{A_n}$ is symmetric as well and hence so is $\widetilde{g}_n$.

By the definition of the set $A_n$, for all $k = 1,\ldots,d$,

\begin{multline*}
B_{n,k}^2 := \esssup_{x_1,\ldots,x_k} \int_{\XX^{d-k}} \widetilde{g}_n(x_1,\ldots,x_d)^2d\lambda^{\otimes(d-k)}(x_{k+1},\ldots, x_d)\\
\le \sup_{x_1,\ldots,x_d} \Ind{E_{\{k+1,\ldots,d\},n}}(x_1,\ldots,x_d) h^2_{\{k+1,\ldots,d\}}(x_{1},\ldots,x_k) \le \Big(\frac{2^n}{\log n}\Big)^{k}
\end{multline*}
and that this inequality remains true for $k = 0$, by the assumption $\|g\|_2 = 1$.

Thus, by Corollary \ref{cor:simplified-estimate}, applied with $T= 2^n$, for sufficiently large $C_d$,

\begin{align}\label{eq:LIL-bounded-part}
& \sum_{n=3}^\infty \p\Big( \sup_{t \in (2^{n-1},2^n]} |I^{(d)}_t(\widetilde{g}_n)| > C_d(2^n \log n )^{\frac{d}{2}}\Big) \\ &\le \sum_{n=3}^\infty 2\sum_{k=0}^d \exp\Big( - 2 \Big( \frac{(2^n \log n)^{\frac{d}{2}}}{(2^n)^{\frac{d-k}{2}}(2^n/\log n)^{k/2}} \Big)^{\frac{2}{d+k}} \Big)
 \le 2(d+1)\sum_{n=3}^\infty \frac{1}{n^2} < \infty. \nonumber
\end{align}
\smallskip

Let us now pass to $I^{(d)}_t(\widehat{g}_n)$. Observe that there exist disjoint sets $F_{I,n}$, $I \subsetneq [d]$, such that $F_{I,n} \subseteq E_{I,n}^c$ and $\bigcup_{I \subsetneq [d]} F_{I,n} = A_n^c$.

Since $\widehat{g}_n = \sum_{I \subsetneq [d]} g\Ind{F_{I,n}}$, by the triangle inequality and Proposition \ref{prop:Czebyszew}, applied with $T = 2^n$, we have
\begin{align*}
  &\sum_{n=3}^\infty \p\Big( \sup_{t \in (2^{n-1},2^n]} |I^{(d)}_t(\widehat{g}_n))| > C_d(2^n \log n)^{\frac{d}{2}}\Big)\\
  &\le \sum_{I \subsetneq [d]} \sum_{n=3}^\infty \p\Big( \sup_{t \in (2^{n-1},2^n]} |I^{(d)}_t(g\Ind{F_{I,n}}))| > 2^{-d}C_d(2^n \log n)^{\frac{d}{2}}\Big)\\
  &\le \sum_{I \subsetneq [d]} \sum_{n=3}^\infty \widetilde{C}_d \frac{2^{n(d-|I|/2)}}{(2^n \log n)^{d/2}}\int_{\XX^{I^c}} \sqrt{\int_{\XX^I} g^2(x)\Ind{F_{I,n}}(x)d\lambda^{\otimes|I|}(x_I)}d\lambda^{\otimes(d-|I|)}(x_{I^c})\\
  & = \widetilde{C}_d \sum_{I \subsetneq [d]} \sum_{n=3}^\infty \int_{\XX^{I^c}}\frac{2^{n(d-|I|)/2}}{(\log n)^{d/2}}\ind{h_{I}(x_{I^c}) > (2^n/\log n)^{(d-|I|)/2}}h_I(x_{I^c})d\lambda^{\otimes(d-|I|)}(x_{I^c})\\
  &\le \widetilde{C}_d \sum_{I \subsetneq [d]} \int_{\XX^{I^c}} h_I(x_{I^c}) \Big(\sum_{n=3}^\infty \Big(\frac{2^n}{\log n}\Big)^{(d-|I|)/2}\ind{h_{I}(x_{I^c}) > (2^n/\log n)^{(d-|I|)/2}}\Big)d\lambda^{\otimes(d-|I|)}(x_{I^c}).
\end{align*}

By Lemma \ref{le:series}, for any $x_{I^c}$, the internal series on right-hand side above does not exceed $C_d h_I(x_{I^c})$, therefore, using the fact that for every $I$,
\begin{displaymath}
  \int_{\XX^{I^c}} h_I(x_{I^c})^2 d\lambda^{\otimes(d-|I|)}(x_{I^c}) = \int_{\XX^d} g(x_1,\ldots,x_d)^2 d\lambda^{\otimes d}(x_1,\ldots,x_d) = 1,
\end{displaymath}
we obtain
\begin{align*}
 \sum_{n=3}^\infty \p\Big( \sup_{t \in (2^{n-1},2^n]} |I^{(d)}_t(\widehat{g}_n))| > C_d(2^n \log n)^{\frac{d}{2}}\Big)
& \le C_d \sum_{I \subsetneq [d]} \int_{\XX^{I^c}} h_I(x_{I^c})^2 d\lambda^{\otimes(d-|I|)}(x_{I^c}) \\
& = C_d(2^d-1) < \infty.
\end{align*}

Combining the above estimate with \eqref{eq:LIL-bounded-part} and using the triangle inequality, we obtain that for sufficiently large constant $C_d$,
\begin{multline*}
\sum_{n=3}^\infty \p\Big( \sup_{t \in (2^{n-1},2^n]} |I^{(d)}_t(g)| > 2C_d(2^n \log n)^{\frac{d}{2}}\Big) \\
\le \sum_{n=3}^\infty\p\Big( \sup_{t \in (2^{n-1},2^n]} |I^{(d)}(\widetilde{g}_n)| > C_d(2^n \log n)^{\frac{d}{2}}\Big) \\+ \sum_{n=3}^\infty \p\Big( \sup_{t \in (2^{n-1},2^n]} |I^{(d)}_t(\widehat{g}_n))| > C_d(2^n \log n)^{\frac{d}{2}}\Big)
 < \infty.
\end{multline*}

Thus, by the Borel-Cantelli Lemma, with probability one, for all $n$ sufficiently large
\begin{displaymath}
  \sup_{t \in (2^{n-1},2^n]} \frac{|I^{(d)}_t(g)|}{(2^n\log n)^{d/2}} \le 2C_d.
\end{displaymath}

Since for some $c>0$ and all $n \ge 3$ and $t \in (2^{n-1},2^n]$, $t \log \log t \ge 2^{n-1}\log\log 2^{n-1} \ge c 2^n \log n$, taking into account that $\|g\|_2 = 1$ and adjusting the constants we obtain that \eqref{eq:bounded-LIL} indeed holds with probability one.

Having established the bounded LIL, we may approximate $g \in L_2^s(\XX^d)$ by functions from $\mathcal{E}_d^s$ for which the limiting set can be obtained as a corollary to the Law of the Iterated Logarithm for the Poisson process.

Let us fix $h \in \mathcal{E}_d^s$ of the form \eqref{eq:step-function}. By \eqref{eq:step-function-integral},
\begin{displaymath}
I_t^{(d)}(h) = \sum_{i_1,...,i_d=1}^R a_{i_1,...,i_d} \prod_{j=1}^d X_{i_j}(t),
\end{displaymath}
where $(X_i(t))_{t\ge 1}$ $j=1,\ldots,R$ are independent one-dimensional compensated Poisson processes with parameters $\lambda(A_i)$, $i=1,\ldots,R$, respectively.
By Strassen's Law of the Iterated Logarithm, with probability one, for each $i \in [d]$, $\limsup_{t\to \infty} \frac{|X_i(t)|}{\sqrt{2t\loglog t}} < \infty$ and, moreover, the set of limit points as $t \to \infty$ of the function $\frac{1}{\sqrt{2t\log \log t}}(X_1(t),\ldots,X_d(t))$ is the ellipsoid
\begin{displaymath}
\mathcal{E} := \Big\{ (y_1,....,y_R) \in \R^R : \sum_{i=1}^R \frac{y_i^2}{\lambda(A_i)} \le 1  \Big\}.
\end{displaymath}
By continuity of the function $\R^R \ni (x_1,\ldots,x_R) \mapsto \sum_{i_1,\ldots,i_d=1}^R a_{i_1,\ldots,i_d} \prod_{j=1}^d x_{i_j} \in \R$ we obtain that the set of limit points of $(2t\log \log t)^{-\frac{d}{2}} \cdot I^{(d)}_t(h)$ is the set
\begin{displaymath}
\Big \{ \sum_{i_1,...,i_d=1}^R a_{i_1,...,i_d} \prod_{k=1}^d y_{i_k} :  \sum_{i=1}^R \frac{y_i^2}{\lambda(A_i)} \le 1 \Big\} = \Big\{ \int_{\XX^d} h \varphi^{\otimes d} d\lambda^{\otimes d} : \int_\XX \varphi^2 d\lambda \le 1\Big\} = \mathcal{K}_h
\end{displaymath}
(recall the notation introduced in \eqref{eq:cluster-set-intro}).
Let us now fix $g \in L_2^s(\XX^d)$ and consider a sequence of step functions $g_m \in \mathcal{E}^s_d$, such that $g_m \to g$ in $L_2(\XX^d)$ as $m\to \infty$. By the bounded LIL \eqref{eq:bounded-LIL} and the above discussion of the limit set for functions in $\mathcal{E}_d^s$, it follows that with probability one for all $m$,
\begin{align}\label{eq:bounded-LIL-difference}
  \limsup_{t\to \infty} \frac{|I^{(d)}_t(g - g_m)|}{(t\loglog t)^{d/2}} \le C_d \|g-g_m\|_2
\end{align}
and for each $g_m$, the limit set of $\frac{I^{(d)}_t(g_m)}{(2t\loglog t)^{d/2}}$ at infinity equals to $\mathcal{K}_{g_m}$.

It is not difficult to show that on this event \eqref{eq:compact-LIL-integrals} holds. Indeed, consider any $z \in \mathcal{K}_g$ of the form
\begin{displaymath}
z = \int_{\XX^d} g(x_1,\ldots,x_d) \varphi(x_1)\cdots\varphi(x_d) d\lambda^{\otimes d}(x_1,\ldots,x_d)
\end{displaymath}
for some $\varphi\colon \XX \to \R$ with $\|\varphi\|_2 \le 1$ and let $z_m\in \mathcal{K}_{g_m}$ be defined by the same formula as $z$, but with $g$ replaced by $g_m$. By the Cauchy--Schwarz inequality we have $|z - z_m| \le \|g-g_m\|_2$. Moreover, there exists a sequence $t_m \to \infty$, such that
\begin{displaymath}
\Big|\frac{I^{(d)}_{t_m}(g_m)}{(2t_m\loglog t_m)^{d/2}} - z_m\Big| \le \frac{1}{m}
\end{displaymath}
and
\begin{displaymath}
  \Big|\frac{I^{(d)}_{t_m}(g-g_m)}{(2t_m\loglog t_m)^{d/2}}\Big| \le C_d \|g-g_m\|_2 + \frac{1}{m}.
\end{displaymath}
We thus obtain
\begin{multline*}
  \Big|\frac{I^{(d)}_{t_m}(g)}{(2t_m\loglog t_m)^{d/2}} - z\Big|
  \le \Big|\frac{I^{(d)}_{t_m}(g-g_m)}{(2t_m\loglog t_m)^{d/2}}\Big| +
  \Big|\frac{I^{(d)}_{t_m}(g_m)}{(2t_m\loglog t_m)^{d/2}} - z_m\Big| + |z_m - z|\\
  \le C_d \|g - g_m\|_2 + \frac{2}{m} + \|g - g_m\|_2 \to 0,
\end{multline*}
which shows that $z$ is in the cluster set of $\frac{I^{(d)}_{t}(g)}{(2t\loglog t)^{d/2}}$.

It remains to show that all limiting points are elements of $\mathcal{K}_g$. Consider thus a point $z$ such that \begin{displaymath}
  z = \lim_{n\to \infty} \frac{I^{(d)}_{t_n}(g)}{(2t_n\loglog t_n)^{d/2}}
\end{displaymath}
for some sequence $t_n\to \infty$.
Selecting convergent subsequences of $\frac{I^{(d)}_{t_n}(g_m)}{(2t_n\loglog t_n)^{d/2}}$ for each $m$, and using \eqref{eq:bounded-LIL-difference} together with the structure of the limit set for $g_m$, we obtain that there exists a sequence of functions $\varphi_m \in L_2(\XX)$ such that $\|\varphi_m\|_2 \le 1$ and
\begin{displaymath}
  \int_{\XX^d} g_m\varphi_m^{\otimes d} d\lambda^{\otimes d} \to z.
\end{displaymath}
By the weak compactness of the unit ball of $L_2(\XX)$ we may assume that $\varphi_m$ converge weakly to some $\varphi\in L_2(\XX)$ (to get sequential weak compactness from the Banach-Alaoglu theorem, in the non-separable case one may restrict to the closed span of the set $\{g,g_1,g_2,\ldots\}$). Then $\|\varphi\|_2 \le 1$ and $\varphi_m^{\otimes d}$ converges weakly in $L_2(\XX^d)$ to $\varphi^{\otimes d}$. We thus obtain
\begin{multline*}
  \Big|z - \int_{\XX^d} g\varphi^{\otimes d} d\lambda^{\otimes d}\Big| \le \Big|z - \int_{\XX^d} g_m\varphi_m^{\otimes d} d\lambda^{\otimes d}\Big|
  + \Big|\int_{\XX^d} (g_m - g)\varphi_m^{\otimes d} d\lambda^{\otimes d}\Big|\\ +\Big|\int_{\XX^d} g\varphi_m^{\otimes d} d\lambda^{\otimes d} - \int_{\XX^d} g\varphi^{\otimes d} d\lambda^{\otimes d}\Big|.
\end{multline*}
The first summand on the right-hand side converges to zero by the definition of the functions $\varphi_m$; the second one is, by the Cauchy--Schwarz inequality, bounded by $\|g_m-g\|_2 \|\varphi_m\|_2^d$ which converges to $0$ due to $\|\varphi_m\|_2 \le 1$; finally the last one converges to zero by the weak convergence of $\varphi_m^{\otimes d}$ to $\varphi^{\otimes d}$. Thus, the left-hand side equals to zero, i.e., $z$ indeed belongs to $\mathcal{K}_g$. This ends the proof of the theorem.
\end{proof}

\section{Proofs of results from Section \ref{sec:examples} (Applications)} \label{sec:proofs-applications}

We will now prove those results concerning applications, which are not simple corollaries to our main theorems, but require additional technical arguments, i.e., propositions on subgraph counts, power length functionals and the Ornstein--Uhlenbeck L\'evy process. The remaining applications, i.e., results on Poisson flat processes and marked processes are either direct consequences of results from Section \ref{sec:main-results} or can be obtained quite easily, sketches of arguments were provided in Section \ref{sec:examples} together with their formulations.

\subsection{Proofs of results from Section \ref{sec:subgraph-counts}}\label{sec:proofs-subgraphs}

\begin{proof}[Proof of Proposition \ref{prop:subgraph-norms}]

It is easy to see that for a connected graph $H$ on $d$ vertices we have
\begin{align}\label{eq:clique-comparison}
  g_{K_d,r} \le g_{H,r}^{sym} \le g_{K_d,d r},
\end{align}
where $K_d$ is the clique on $d$ vertices (with the second estimate following from the triangle inequality).
The functions involved are nonnegative, so pointwise estimates imply inequalities between the norms $\|\cdot\|_\mathcal{J}$. From now on, to simplify the notation we will write $g = g_{K_d,r}$.
Note that by integrating out $x_{n+1},\ldots,x_d$ from the formula \eqref{eq:formula-for-g} for $g$, we obtain
\begin{align}\label{eq:from-g-to-g_n}
  g_n(x_1,\ldots,x_n) \le \lambda(B(x_1,r))^{d-n}g_{K_n,r}(x_1,\ldots,x_n)\le B_{r}^{d-n}g_{K_n,r}(x_1,\ldots,x_n).
\end{align}
Again, by integrating out consecutive variables one obtains
\begin{multline*}
  \|g_n\|_{\{[n]\}}^2 =\int_{\XX^n} g_n(x_1,\ldots,x_n)^2 d\lambda^{\otimes n}(x_1,\ldots,x_n) \\
\le \int_{\XX} \lambda(B(x_1,r))^{2d-2n} \lambda(B(x_1,r))^{n-1}d\lambda(x_1) = A_{r,2d-n-1},
\end{multline*}
which when combined with \eqref{eq:clique-comparison} proves the first inequality of \eqref{eq:subgraphs-HS}. The second one is trivial.

Consider now a partition $\mathcal{J} = \{J_1,\ldots,J_m\}$ of $[d]$ with $m \ge 2$. Due to symmetry of $g$, without loss of generality we can assume that $J_1 = \{1,\ldots,\ell\} \neq \emptyset,[d]$.
We have (with convention $x=(x_1,...,x_d)$)
\begin{multline*}
  \|g\|_{\mathcal{J}} \le \|g\|_{\{J_1,J_1^c\}} \\
  =
  \sup\Big\{ \int_{\XX^d}  \varphi(x_1,\ldots,x_\ell)\psi(x_{\ell+1},\ldots,x_d) \prod_{1\le i,j\le d} \ind{\rho(x_i, x_j) \le r}d\lambda^{\otimes d}(x): \|\varphi\|_2,\|\psi\|_2\le 1\Big\}.
\end{multline*}
Using the Cauchy--Schwarz inequality, we obtain
\begin{align*}
&\Big(\int_{\XX^d} \varphi(x_1,\ldots,x_\ell)\psi(x_{\ell+1},\ldots,x_d)\prod_{1\le i,j\le d} \ind{\rho(x_i,x_j) \le r}  d\lambda^{\otimes d}(x)\Big)^2\\
  \le& \int_{\XX^d}\varphi(x_1,\ldots,x_\ell)^2 \prod_{1\le i\le d} \ind{\rho(x_i,x_1) \le r} d\lambda^{\otimes d}(x)  \\
  &\times \int_{\XX^d}\psi(x_{\ell+1},\ldots,x_d)^2 \prod_{1\le i\le d} \ind{\rho(x_i,x_d) \le r}  d\lambda^{\otimes d}(x)   \\
  \le& B_{r}^{d-\ell}\int_{\XX^\ell}\varphi(x_1,\ldots,x_\ell)^2d\lambda^{\otimes \ell}(x_1,\ldots,x_\ell) \\& \times B_r^\ell  \int_{\XX^{d-\ell}}\psi(x_{\ell+1},\ldots,x_d)^2 d\lambda^{\otimes(d-\ell)}(x_{\ell+1},\ldots,x_d) \le B_r^{d}.
\end{align*}
This proves that $\|g\|_\mathcal{J} \le B_r^{d/2}$. Applying this inequality to $K_n$ instead of $K_d$ and using \eqref{eq:from-g-to-g_n} we obtain $\|g_n\|_\mathcal{J} \le B_r^{d-n/2}$, which by \eqref{eq:clique-comparison} implies \eqref{eq:subgraphs-finer-partitions-k=0}.

Fix now $1\le k\le d$. For any $x_{[k]} \in \XX^{[k]}$ the function $g(x_{[k]},\cdot)\colon \XX^{[k]^c}\to \R$ can be bounded from above by
\begin{displaymath}
  h(x_{k+1},\ldots,x_d) = \prod_{j=k+1}^d \ind{\rho(x_i,x_1)\le r_t},
\end{displaymath}
and so for any $\mathcal{J} \in \mathcal{P}_{[k]^c}$,
\begin{displaymath}
  \|g(x_{[k]},\cdot)\|_\mathcal{J} \le \|g(x_{[k]},\cdot)\|_{\{[k]^c\}} \le B_r^{\frac{d-k}{2}},
\end{displaymath}
where we again integrated out consecutive variables over the balls of radius $r$. This shows that $\|\|g\|_\mathcal{J}\|_\infty \le B_r^{\frac{d-k}{2}}$.

When combined with \eqref{eq:clique-comparison}, this proves \eqref{eq:subgraphs-finer-partitions} for $n = d$, the general case again follows from the special one and the estimate \eqref{eq:from-g-to-g_n}.
\end{proof}

\subsection{Proofs of results from Section \ref{sec:length-power}}\label{sec:length-power-proofs}
Propositions \ref{prop:power-length-LIL} and \ref{prop:PowerLengthIneq} will follow from the variance formula \eqref{eq:U-stati-variance} and Corollaries \ref{cor:Ustat-LIL}, \ref{cor:U-stat-tail} respectively, once we estimate $\|g\|_1$ and $\|\|g_n\|_\mathcal{J}\|_\infty$ for $n = 1,2$.

Due to the boundedness of $r$, if $|x-y| \le r(x) + r(y)$ then $|x-y| \le 2\|r\|_\infty$. Thus, by \eqref{eq:A-beta}, $r(y) \le \sup\{ r(x+v) : |v| \le 2\|r\|_\infty\} \le (\beta-1)r(x)$ for every such pair $(x,y)$. Hence,

\begin{multline*}
  \|g\|_1 = \int_{\R^N}\int_{\R^N}|x-y|^{\alpha}\ind{|x-y| \le r(x) + r(y)} \lambda(dx)\lambda(dy)\\
  \le \beta^{\alpha} \int_{\R^N}r^{\alpha}(x) \int_{\R^N}\ind{|x-y| \le \beta r(x)}\lambda(dy)\lambda(dx) = \beta^{\alpha} \int_{\mathbb R^N} r^{\alpha}(x) \lambda(B(x,\beta r(x))) \lambda(dx).
\end{multline*}

The $L_1$ part of the integrability assumption asserts that the integral on the right-hand side is finite.
Similarly, taking into account that $g_2 = g$, we obtain

\begin{multline*}
\|g\|_2^2 = \|g_2\|_2^2 = \|g_2\|_{\{1,2\}}^2 = \int_{\R^N}\int_{\R^N}|x-y|^{2\alpha}\ind{|x-y| \le r(x) + r(y)} \lambda(dx)\lambda(dy) \\
\le \beta^{2\alpha} \int_{\R^N}r^{2\alpha}(x) \int_{\R^N}\ind{|x-y| \le \beta r(x)}\lambda(dy)\lambda(dx) = \beta^{2\alpha} \int_{\mathbb R^N} r^{2\alpha}(x) \lambda(B(x,\beta r(x))) \lambda(dx) \\
= \beta^{2\alpha} A_{2\alpha,1}< \infty,
\end{multline*}
where in the last inequality we used again the assumption that $x \mapsto r(x)^\alpha\lambda(B(x,\beta r(x)))$ is integrable, together with the assumption that the function $r$ is bounded.

Passing to $\|g_2\|_{\{1\}\{2\}}$, for every $\phi,\psi:\R^N \to \R$ with $\|\phi\|_2,\|\psi\|_2 \le 1$, by the Cauchy--Schwarz inequality we have
\begin{multline*} \int_{\R^N} \int_{\R^N} |x-y|^\alpha \ind{\|x-y\| \le r(x) + r(y)} \phi(x)\psi(y) \lambda(dx)\lambda(dy) \\
\le \beta^{\alpha}\int_{\R^N} \int_{\R^N} \phi(x) r^{\frac{\alpha}{2}}(x) \psi(y) r^{\frac{\alpha}{2}}(y) \ind{ |x-y| \le \beta r(x)} \ind{ |x-y| \le \beta r(y)} \lambda(dy) \lambda(dx) \\
\le \beta^{\alpha} \sqrt{\int_{\R^N}  \phi^2(x) r^\alpha(x) \lambda(B(x,\beta r(x)))\lambda(dx)} \sqrt{\int_{\R^N} \psi^2(y) r^{\alpha}(y)\lambda(B(y,\beta r(y))) \lambda(dy)} \\
\le \beta^\alpha \esssup_{x \in \R^N} r^\alpha(x)\lambda(B(x,\beta r(x)))
\end{multline*}
and hence
\begin{displaymath}
\|g_2\|_{\{1\}\{2\}} \le \beta^\alpha \esssup_{x \in \R^N} r^\alpha(x) \lambda(B(x,\beta r(x))) = \beta^\alpha B_{\alpha,1}.
\end{displaymath}

Now,

\begin{displaymath}
\| \|g_2\|_{\{2\}} \|_\infty = \esssup_{x \in \R^N} \| g(x,\cdot)\|_2 \le \esssup_{x \in \R^N} \beta^{\alpha} r^{\alpha}(x) \sqrt{\lambda( B(x,\beta r(x)))}
= \beta^\alpha B_{\alpha,1/2}
\end{displaymath}
and
\begin{displaymath}
\| \|g_2\|_\emptyset \|_\infty = \|g\|_\infty \le \beta^\alpha \esssup_{x \in \R^N} r^\alpha (x) = \beta^{\alpha}B_{\alpha,0}.
\end{displaymath}

It remains to estimate $\|g_1\|_{\{1\}} = \|g_1\|_2$ and $\|\|g_1\|_\emptyset\|_\infty = \|g_1\|_\infty$. Since for $x \in \R^N$,
\begin{displaymath}
g_1(x) = \int_{\R^N} g(x,y)\lambda(dy) \le \beta^\alpha r^{\alpha}(x) \lambda (B(x,\beta r(x))),
\end{displaymath}
we have
\begin{displaymath}
\| \|g_1\|_{\emptyset} \|_\infty \le \beta^\alpha \esssup_{x \in \R^N} r^\alpha(x) \lambda( B(x,\beta r(x))) = \beta^\alpha B_{\alpha,1}
\end{displaymath}
and
\begin{displaymath}
\|g_1\|_{\{1\}}^2 \le \beta^{2\alpha} \int_{R^N} r^{2\alpha}(x) ( \lambda( B(x,\beta r(x))))^2 \lambda(dx) = \beta^{2\alpha} A_{2\alpha,2} < \infty
\end{displaymath}
by the $L_2$ part of the integrability assumption of Proposition \ref{prop:power-length-LIL}. As announced, the proofs of both propositions follow now from the above estimates, \eqref{eq:U-stati-variance} and Corollaries \ref{cor:Ustat-LIL}, \ref{cor:U-stat-tail} (in the case of the tail inequality we also use the trivial observation that $B_{\gamma,p}^q = B_{\gamma q,pq}$).

\subsection{Proof of results from Section \ref{sec:OUL}}\label{sec:OUL-proofs}

To prepare the setting for the application of Theorem \ref{thm:tails-and-moments-main} we will use some calculations from \cite{MR2642882,schulte2023moderate} concerning the chaos decomposition of $Q_T$.
It turns out that
\begin{displaymath}
  Q_T = I^{(1)}(g_1) + I^{(2)}(g_2),
\end{displaymath}
where $g_1(x,u) = u^2f_1(x)$
with
\begin{displaymath}
f_1(x) = \Ind{(-\infty,T]}(x)e^{2\rho x}\Big((1 - e^{-2\rho T})\ind{x \le 0} + (e^{-2\rho x} -e^{-2\rho T})\ind{x > 0}\Big),
\end{displaymath}
and $g_2((x_1,u_1),(x_2,u_2)) = u_1u_2f_2(x_1,x_2)$ with
\begin{align*}
  f_2(x_1,x_2)  = & \Ind{(-\infty,T]^2}(x_1,x_2)e^{\rho(x_1+x_2)}\\
  & \times \Big((1-e^{-2\rho T})\ind{\max(x_1,x_2)\le 0}
  +(e^{-2\rho\max(x_1,x_2)}-e^{-2\rho T})\ind{\max(x_1,x_2) >0}\Big).
\end{align*}
Since $0 \le f_1,f_2 \le 1$, we obtain $\|g_1\|_\infty, \|g_2\|_\infty \le A^2$. Explicit integration shows that
\begin{align}\label{eq:Ornstein-L2-norms}
\|g_1\|_{\{1\}}^2 & = \|g_1\|_2^2 = c_\nu^2 \Big(T +\frac{e^{-2\rho T} - 1}{2\rho}\Big),\nonumber \\
 \|g_2\|_{\{1,2\}}^2 & = \|g_2\|_2^2 = \frac{T}{\rho} + \frac{e^{-2\rho T} - 1}{2\rho^2}.
\end{align}
Thus,
\begin{displaymath}
\Var(Q_T) = \|g_1\|_2^2 + 2\|g\|_2^2 \le (c_\nu^2 +2\rho^{-1})T.
\end{displaymath}

To apply Theorem \ref{thm:tails-and-moments-main} it remains to bound $\|g_2\|_{\{1\}\{2\}}$ and $\| \|g_2\|_{\{1\}}\|_\infty$
Let us start with $\|g_2\|_{\{1\}\{2\}}$, which is just the operator norm of the kernel operator defined by $g_2$. Due to the tensor product structure of $g_2$ and the assumption $\int_\R u^2 d\nu(u) = 1$, it is easy to see that $\|g_2\|_{\{1\}\{2\}} = \|f_2\|_{\{1\}\{2\}}$. Moreover, one can verify (it is in fact a part of the calculations leading to \eqref{eq:Ornstein-L2-norms}) that
\begin{align}\label{eq:Ornstein-integral-on-3-quadrants}
\int_{\R^2\setminus [0,T]^2} f_2(x_1,x_2)^2 dx_1dx_2 \le \frac{1}{\rho^2},
\end{align}
so it remains to bound $\|f_2\Ind{[0,T]^2}\|_{\{1\}\{2\}}$, which by symmetry and the triangle inequality can be bounded from above by 2$\|f_2\Ind{\Delta}\|_{\{1\}\{2\}}$, where $\Delta = \{(x_1,x_2)\in [0,T]^2\colon x_1 \le x_2\}$.

We have $0 \le f_2\Ind{\Delta}(x_1,x_2) \le e^{\rho(x_1-x_2)}$, and so for any $\psi_1,\psi_2\colon \R\to \R$ with $\|\psi_i\|_2 \le 1$,
\begin{multline*}
  \int_{\R}\int_\R f_2(x_1,x_2)\Ind{\Delta}(x_1,x_2)\psi_1(x_1)\psi_2(x_2)dx_1dx_2\\
  \le \Big(\int_0^\infty e^{2 \rho x_1} \Big(\int_{x_1}^\infty  |\psi_2(x_2)| e^{-\rho x_2}dx_2\Big)^2 dx_1\Big)^{1/2}\\
  \le \Big(\int_0^\infty e^{2 \rho x_1} \Big(\int_{x_1}^\infty  |\psi_2(x_2)|^2 e^{-\rho x_2}dx_2\Big) \Big(\int_{x_1}^\infty e^{-\rho x_2}dx_2\Big) dx_1\Big)^{1/2}\\
  = \frac{1}{\sqrt{\rho}} \Big(\int_0^\infty e^{\rho x_1} \int_{x_1}^\infty  |\psi_2(x_2)|^2 e^{-\rho x_2}dx_2 dx_1\Big)^{1/2}\\
  = \frac{1}{\sqrt{\rho}}  \Big(\int_0^\infty |\psi_2(x_2)|^2 e^{-\rho x_2}\int_{0}^{x_2} e^{\rho x_1}  dx_1 dx_2 \Big)^{1/2}
  \le \frac{1}{\rho}\Big(\int_0^\infty |\psi_2(x_2)|^2 dx_2\Big)^{1/2} \le \frac{1}{\rho},
\end{multline*}
where we used twice the Cauchy-Schwarz inequality. Combining this with \eqref{eq:Ornstein-integral-on-3-quadrants}, we thus obtain $\|g_2\|_{\{1\}\{2\}} \le \frac{3}{\rho}$.

As for $\| \|g_2\|_{\{1\}}\|_\infty$, by the Fubini theorem and the assumption $\int_\R u^2d\nu(u) = 1$, we have for $x_1 \le 0$,
\begin{multline*}
\int_\R \int_\R g_2((x_1,u_1),(x_2,u_2))^2 d\lambda(x_2,u_2) \\
= u_1^2\Big(\int_{-\infty}^{0} e^{2\rho (x_1+x_2)} (1-e^{-2\rho T})^2 dx_2 + \int_0^T e^{2\rho(x_1+x_2)}(e^{-2\rho x_2} - e^{-2\rho T})^2dx_2\Big)\\
\le A^2 \frac{1}{\rho}e^{2\rho x_1} \le \frac{A^2}{\rho},
\end{multline*}
while for $0<x_1 < T$, we have
\begin{multline*}
  \int_\R \int_\R g_2((x_1,u_1),(x_2,u_2))^2 d\lambda(x_2,u_2) \\
  = u_1^2\Big(\int_{-\infty}^{x_1}e^{2\rho(x_1+x_2)}(e^{-2\rho x_1} - e^{-2\rho T})^2dx_2 + \int_{x_1}^T e^{2\rho(x_1+x_2)}(e^{-2\rho x_2} - e^{-2\rho T})^2dx_2\Big)\\
   \le A^2\Big(e^{-2\rho x_1}\frac{1}{2\rho}e^{2\rho x_1} + e^{2\rho x_1}\frac{1}{2\rho}e^{-2\rho x_1}\Big) \le \frac{A^2}{\rho}.
\end{multline*}

Thus, $\|\|g_2\|_{\{2\}}\|_\infty \le \frac{A}{\sqrt{\rho}}$ and taking into account previous estimates we may use Theorem \ref{thm:tails-and-moments-main} for $I^{(i)}(g_i)$, combined with the triangle inequality, to obtain
\begin{multline*}
  \p(|Q_T - \E Q_T|\ge u) \le 2\exp\Big(-c\min\Big(\frac{u^2}{\Var(Q_T)}, \frac{u}{\max(\rho^{-1},A^2)},\frac{u^{2/3}}{A^{2/3}\rho^{-1/3}},\frac{u^{1/2}}{A}\Big)\Big),
\end{multline*}
where (we recall) $c_\nu = \int_\R u^4 d\nu(u)$, $A = \sup\{|x|\colon x \in {\rm supp\;}\nu\}$.
Note that for $\frac{u^{1/2}}{A} \ge 1$, we have $\frac{u^{1/2}}{A} \le \frac{u}{\max(\rho^{-1},A^2)}$, while in the complementary case, $2\exp(-\log(2) u^{1/2}/A) \ge 1$. Therefore, by adjusting the constant $c$, one can skip the second argument of the $\min$ in the above inequality.
Ultimately, we thus obtain \eqref{eq:prop-3.6}, which ends the proof of Proposition \ref{prop:OUL}.

\appendix

\section{$U$-statistics in independent random variables}\label{sec:decoupling}

In this section we present basic inequalities for $U$-statistics in independent random variables, used in our proofs.

Let us start with a decoupling inequality due to de la Pe\~{n}a and Montgromery-Smith {\cite[Theorem 1]{MR1261237}}. We refer to the monograph \cite{MR1666908} for a detailed discussion of decoupling.

\begin{theorem}\label{thm:decoupling} Let $d$ be a positive integer and for $n \ge d$ let $(X_i)_{i=1}^n$ be a sequence of independent random variables with values in a measurable space $(S,\mathcal{S})$ and let $(X^{(j)}_i)_{i=1}^n$, $j= 1,\ldots,d$ be $d$ independent copies of this sequence. Let $E$ be a separable Banach space and for each $(i_1,\ldots,i_d) \in [n]^d$ with pairwise distinct coordinates let $h_{i_1,\ldots,i_d} \colon S^d \to E$ be a measurable function. There exists a numerical constant $C_d$, depending only on $d$, such that for all $u > 0$,
\begin{multline*}
\p\Big(\Big\|\sum_{1\le i_1\neq \ldots\neq i_d\le n} h_{i_1,\ldots,i_d} (X_{i_1},\ldots,X_{i_d})\Big\|> u\Big)\\
\le C_d \p\Big(\Big\|\sum_{1\le i_1\neq \ldots\neq i_d\le n} h_{i_1,\ldots,i_d}(X^{(1)}_{i_1},\ldots,X^{(d)}_{i_d})\Big\|> u/C_d\Big).
\end{multline*}
As a consequence, for all $p \ge 1$,
\begin{displaymath}
\Big\|\sum_{1\le i_1\neq \ldots\neq i_d\le n} h_{i_1,\ldots,i_d}  (X_{i_1},\ldots,X_{i_d})\Big\|_p \le C_d' \Big\|\sum_{1\le i_1\neq \ldots\neq i_d\le n} h_{i_1,\ldots,i_d} (X^{(1)}_{i_1},\ldots,X^{(d)}_{i_d})\Big\|_p,
\end{displaymath}
where $C_d'$ is another numerical constant depending only on $d$.

If moreover the functions $h_{i_1,\ldots,i_d}$ are symmetric in the sense that, for all $x_1,\ldots,x_d \in S$ and all permutations $\pi\colon [d]\to [d]$,
$h_{i_1,\ldots,i_d}(x_1,\ldots,x_d) = h_{i_{\pi_1},\ldots,i_{\pi_d}}(x_{\pi_1},\ldots,x_{\pi_d})$,
then for all $u > 0$,
\begin{multline*}
\p\Big(\Big\|\sum_{1\le i_1\neq \ldots\neq i_d\le n} h_{i_1,\ldots,i_d} (X^{(1)}_{i_1},\ldots,X^{(d)}_{i_d})\Big\|> u\Big)\\
\le \widetilde{C}_d \p\Big(\Big\|\sum_{1\le i_1\neq \ldots\neq i_d\le n} h_{i_1,\ldots,i_d} (X_{i_1},\ldots,X_{i_d})\Big\|>u/\widetilde{C}_d\Big),
\end{multline*}
where $\widetilde{C}_d$ is a constant depending only on $d$.
As a consequence, for some numerical constant $\widetilde{C}_d'$, depending only on $d$, and all $p \ge 1$,
\begin{displaymath}
\Big\|\sum_{1\le i_1\neq \ldots\neq i_d\le n} h_{i_1,\ldots,i_d} (X^{(1)}_{i_1},\ldots,X^{(d)}_{i_d})\Big\|_p
\le \widetilde{C}_d'\Big\|\sum_{1\le i_1\neq \ldots\neq i_d\le n} h_{i_1,\ldots,i_d} (X_{i_1},\ldots,X_{i_d})\Big\|_p.
\end{displaymath}

\end{theorem}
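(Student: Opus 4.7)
The plan is to establish the two directions of Theorem \ref{thm:decoupling} via the random-partition technique that underlies the classical de~la~Pe\~na--Montgomery-Smith method. Introduce auxiliary i.i.d. random variables $\eta_1,\ldots,\eta_n$, uniform on $\{1,\ldots,d\}$, independent of all $X_i^{(j)}$, and set $A_k=\{i:\eta_i=k\}$. Form the ``diagonal'' partial sum
\[
W \;=\; \sum_{(i_1,\ldots,i_d)\in A_1\times\cdots\times A_d} h_{i_1,\ldots,i_d}(X_{i_1},\ldots,X_{i_d}).
\]
Two observations are key. First, because the $A_k$ are pairwise disjoint, any $(i_1,\ldots,i_d)\in A_1\times\cdots\times A_d$ has automatically distinct coordinates, so $W$ equals $\sum_{\mathbf{i}\in [n]^d_{\neq}} h_{\mathbf{i}}(X_{i_1},\ldots,X_{i_d})\prod_k \ind{\eta_{i_k}=k}$, and a direct count yields $\E_\eta W = d^{-d}S$. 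Second, conditionally on $\eta$, by the independence of $(X_i)_{i\in A_1},\ldots,(X_i)_{i\in A_d}$, we have $W\stackrel{d}{=}\widetilde W$, where
\[
\widetilde W \;=\; \sum_{(i_1,\ldots,i_d)\in A_1\times\cdots\times A_d} h_{i_1,\ldots,i_d}(X^{(1)}_{i_1},\ldots,X^{(d)}_{i_d}).
\]
Applying conditional Jensen's inequality to $S=d^d\,\E_\eta W$ gives $\|S\|_p\le d^d\|W\|_p = d^d\|\widetilde W\|_p$, reducing the decoupling bound to a \emph{contraction-type} comparison between the random subsum $\widetilde W$ of $\tilde S$ and the full sum $\tilde S$ itself.

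The remaining step is therefore to prove
\[
\|\widetilde W\|_p \;\le\; C_{d,p}\,\|\tilde S\|_p
\qquad\text{and the analogous tail bound}\qquad \p(\|\widetilde W\|>u)\le C_d\,\p(\|\tilde S\|>u/C_d).
\]
Here $\widetilde W$ arises from $\tilde S$ by multiplying each term by the tensor-product mask $\prod_k\ind{\eta_{i_k}=k}$. Since the masks for different multi-indices $\mathbf{i}$ are \emph{not} mutually independent (they share common $\eta_i$'s), a one-step Khintchine-type contraction does not apply. Instead, one proceeds by iterative conditioning: expose the variables $\eta_1,\eta_2,\ldots,\eta_n$ one at a time, and at each step compare a single indicator $\ind{\eta_i=k}$ with its mean $1/d$, exploiting the fact that exposing $\eta_i$ only affects the linear part of $\widetilde W$ in the corresponding ``slot''. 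A careful bookkeeping of these partial conditional expectations, combined with the $L_p$ triangle inequality in the target Banach space $E$, yields the contraction bound; the tail inequality then follows either by a Paley--Zygmund-style argument or by integrating the moment estimates against an optimal exponent.

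For the reverse inequality when $h$ is permutation-symmetric, the argument is considerably simpler. Average $\tilde S$ over a uniformly random permutation $\pi\in S_d$: by the assumed symmetry $h_{i_1,\ldots,i_d}(x_1,\ldots,x_d)=h_{i_{\pi_1},\ldots,i_{\pi_d}}(x_{\pi_1},\ldots,x_{\pi_d})$ together with the exchangeability of the copies $(X^{(j)})_{j\le d}$, this average has the same distribution as $\tilde S$ but admits a coupling (via Fubini in $\pi$) to a random-partition construction built from the single sample $(X_i)_i$, producing a partially coupled variant of $S$. Combining with the decoupling direction applied in reverse and adjusting constants yields $\p(\|\tilde S\|>u)\le \widetilde C_d\,\p(\|S\|>u/\widetilde C_d)$, with the $L_p$ version obtained by integrating against $u^{p-1}du$. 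The main technical obstacle throughout is the contraction step for $\widetilde W$: the non-independence of the masks forces the iterative conditioning approach, and it is there that all $d$-dependent constants enter the estimate.
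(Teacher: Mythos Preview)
The paper does not prove Theorem~\ref{thm:decoupling}. It is stated in Appendix~A as a known result, attributed to de~la~Pe\~na and Montgomery-Smith \cite[Theorem~1]{MR1261237}, and used throughout as a black box; there is no proof in the paper to compare against.

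Regarding your sketch itself: the opening moves are correct and standard --- the random partition $(A_1,\ldots,A_d)$, the identity $\E_\eta W=d^{-d}S$, and the distributional equality $W\stackrel{d}{=}\widetilde W$ conditionally on $\eta$ --- and together they give $\|S\|_p\le d^d\|\widetilde W\|_p$. The real gap is the step you call ``contraction'', i.e.\ $\|\widetilde W\|_p\le C_d\|\tilde S\|_p$ and its tail analogue. Your proposed mechanism (expose $\eta_1,\ldots,\eta_n$ one at a time and ``compare a single indicator $\ind{\eta_i=k}$ with its mean $1/d$'') is not a proof: revealing $\eta_i$ affects every summand containing index $i$ in \emph{any} of the $d$ slots, not a single ``linear part'', and there is no martingale/contraction structure in that filtration that controls $\widetilde W$ by $\tilde S$. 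Note also that the obvious relation goes the wrong way: $\E_\eta\widetilde W=d^{-d}\tilde S$, so Jensen yields $\|\tilde S\|_p\le d^d\|\widetilde W\|_p$, not what you need. In the actual de~la~Pe\~na--Montgomery-Smith argument this step is handled by a different device: one sets $Z_i=X_i^{(\eta_i)}$, uses that $(Z_i)_i\stackrel{d}{=}(X_i)_i$ and is \emph{independent} of $(\eta_i)_i$, and exploits a distributional identity relating the full decoupled sum built from the $X^{(j)}$'s to a conditional object built from the $Z$'s, with no iteration over $i$. Your reverse-direction sketch (average over a uniform permutation $\pi$) starts at the right place but the sentence ``admits a coupling \ldots\ to a random-partition construction built from the single sample'' is precisely the content that needs to be supplied; as written it is an assertion, not an argument.
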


The next result is a maximal inequality for decoupled $U$-statistics in i.i.d. random variables. It is a corollary to \cite[Lemma 9]{MR2408582}, which in turn is based on a result due to Montgomery-Smith \cite{MR1321767} (which corresponds to the case $d=1$). Note that the assumption of identical distribution is crucial for this type of result.

\begin{lemma}\label{le:MS-maximal-inequality}
Let $d$ be a positive integer and let $X^{(j)}_i$, $i\in [n], j\in [d]$ be i.i.d. random variables with values in a measurable space $(S,\mathcal{S})$. Consider a measurable function $h\colon S^d \to E$, where $E$ is a separable Banach space. Then for any $u > 0$,
\begin{multline*}
  \p\Big(\max_{\ell=1,\ldots,n} \Big\| \sum_{i_1,\ldots,i_d = 1}^{\ell} h(X^{(1)}_{i_1},\ldots,X^{(d)}_{i_d})\Big\|
  \ge u\Big) \\ \le C^d\p\Big(\Big\| \sum_{i_1,\ldots,i_d = 1}^{\ell} h(X^{(1)}_{i_1},\ldots,X^{(d)}_{i_d})\Big\| \ge u/C^d\Big),
\end{multline*}
where $C$ is a universal constant.
\end{lemma}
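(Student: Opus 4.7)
The plan is to prove the lemma by induction on $d$, using the Banach space valued version of Montgomery--Smith's maximal inequality \cite{MR1321767} as the main tool. The base case $d=1$ is Montgomery--Smith's inequality directly applied to the i.i.d. $E$-valued random variables $h(X^{(1)}_i)$. For the inductive step, the key observation is that the diagonal maximum $\max_{\ell}\|S_\ell\|$ is bounded above by the multi-indexed maximum $\max_{\ell_1,\ldots,\ell_d}\|T_{\ell_1,\ldots,\ell_d}\|$, where
\[
T_{\ell_1,\ldots,\ell_d}=\sum_{i_1=1}^{\ell_1}\cdots\sum_{i_d=1}^{\ell_d}h\bigl(X^{(1)}_{i_1},\ldots,X^{(d)}_{i_d}\bigr),
\]
so it suffices to control this larger quantity and then reduce each $\max_{\ell_j}$ one at a time.

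To eliminate $\max_{\ell_1}$, condition on $(X^{(2)},\ldots,X^{(d)})$ and view the partial sum process as a random variable taking values in the separable Banach space $E'=\ell_\infty^{n^{d-1}}(E)$. Concretely, set
\[
\tilde Y_{i_1}=\Bigl(\sum_{i_2=1}^{\ell_2}\cdots\sum_{i_d=1}^{\ell_d}h\bigl(X^{(1)}_{i_1},X^{(2)}_{i_2},\ldots,X^{(d)}_{i_d}\bigr)\Bigr)_{\ell_2,\ldots,\ell_d\in[n]}\in E',
\]
so that the $E'$-valued sequence $(\tilde Y_{i_1})_{i_1\in[n]}$ is i.i.d. conditionally on $(X^{(2)},\ldots,X^{(d)})$ (by independence of $X^{(1)}_{i_1}$ across $i_1$ and equality in distribution). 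Applying the Banach space Montgomery--Smith maximal inequality conditionally,
\[
\p\Bigl(\max_{\ell_1}\|{\textstyle\sum_{i_1\le\ell_1}}\tilde Y_{i_1}\|_{E'}\ge u\,\bigm|\,X^{(2)},\ldots,X^{(d)}\Bigr)\le C\,\p\Bigl(\|{\textstyle\sum_{i_1=1}^n}\tilde Y_{i_1}\|_{E'}\ge u/C\,\bigm|\,X^{(2)},\ldots,X^{(d)}\Bigr),
\]
and integrating out the conditioning yields
\[
\p\Bigl(\max_{\ell_1,\ldots,\ell_d}\|T_{\ell_1,\ldots,\ell_d}\|\ge u\Bigr)\le C\,\p\Bigl(\max_{\ell_2,\ldots,\ell_d}\|T_{n,\ell_2,\ldots,\ell_d}\|\ge u/C\Bigr).
\]
Iterating this argument $d$ times, conditioning at step $j$ on all sequences $X^{(i)}$ with $i\neq j$ and repackaging the partial sum process as an $E''$-valued sum that is i.i.d. in $i_j$, each iteration absorbs one factor of $C$ and removes one of the maxima. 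After $d$ rounds, what remains is $\|T_{n,\ldots,n}\|=\|\sum_{i_1,\ldots,i_d=1}^n h(X^{(1)}_{i_1},\ldots,X^{(d)}_{i_d})\|$, giving the claimed bound with constant $C^d$.

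The main obstacle to clean up is really a bookkeeping one: ensuring that at each inductive step the conditional i.i.d. structure holds and that the relevant auxiliary Banach space (always a finite $\ell_\infty$-sum of copies of $E$, hence separable) fits the hypothesis of the Banach space valued Montgomery--Smith inequality. Since $E'$ is separable whenever $E$ is, and since the independence across the index $i_j$ being summed is preserved by conditioning on all the other sequences, this is only a matter of carefully stating the conditional independence at each step. Once the reduction is set up, the proof is essentially a mechanical $d$-fold application of the one-dimensional maximal inequality.
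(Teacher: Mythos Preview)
Your argument is correct and is precisely the approach the paper has in mind: the paper does not prove the lemma but cites it as a corollary to \cite[Lemma~9]{MR2408582}, which is exactly the iterated application of Montgomery--Smith's inequality via conditioning and the $\ell_\infty^{n^{d-1}}(E)$ embedding that you describe. Your bookkeeping is right; the only cosmetic remark is that the argument is more naturally phrased as a $d$-fold iteration than as an induction, since you never actually invoke the statement for $d-1$ but rather peel off one coordinate at a time directly.
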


\section{Proof of approximation results (Lemma \ref{le:approximation})}\label{sec:approximation}

The first part of Lemma \ref{le:approximation} is standard, the statement concerning symmetric functions can be found in exactly this formulation in \cite[Lemma C.1]{MR4193885}, while the statement concerning $g \in L_2(\XX^d)$ can be inferred from the proof therein; it also appears as a part of a proof, e.g., in \cite[pages 8--9]{MR2200233}. A related result is \cite[Proposition E.16]{MR1474726}, which interprets $L_2^s(\XX^d)$ as the symmetric tensor power of $L_2(\XX)$. We will therefore focus on the second part. Its version for not necessarily symmetric functions on a product of intervals appeared in \cite{MR2294982}. We will need the following special case of Lemma 11 from this article. We note that to adapt it to our setting, in the formulation we exchanged the roles of the sets $I$ and $I^c$ with respect to \cite{MR2294982}.

\begin{lemma}\label{le:subset-extraction} Let $(S,\mathcal{F},\mu)$ be a probability space and let $d$ be a positive integer. There exists a constant $C_d$, such that for every $\varepsilon > 0$ and every measurable $A \in \mathcal{F}^{\otimes d}$ with $\mu^{\otimes d}(A)> 1 -\varepsilon$ there exists a set $B \in \mathcal{F}^{\otimes d}$ such that $B \subseteq A$, $\mu^{\otimes d}(B) \ge 1 - C_d\varepsilon^{1/2^{d-1}}$, and for all $\emptyset \neq I \subseteq [d]$ and $x_{I^c} \in S^{I^c}$, setting $B_{x_{I^c}} = \{x_I \in S^{I}\colon (x_I,x_{I^c}) \in B\}$, we have
\begin{displaymath}
  B_{x_{I^c}} = \emptyset \textrm{ or } \mu^{\otimes |I|} (B_{x_{I^c}}) \ge 1 - C_d \varepsilon^{1/2^{d-1}}.
\end{displaymath}
\end{lemma}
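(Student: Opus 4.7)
The set $B$ is constructed by iteratively removing from $A$ the points whose partial sections have intermediate measure, via a nested application of Fubini's theorem and Markov's inequality through $d-1$ levels with geometrically halving thresholds.

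For $d = 1$ one takes $B = A$. For $d \ge 2$, set $\delta_k = \varepsilon^{1/2^k}$ for $k = 1, \ldots, d-1$ and inductively define a decreasing chain of measurable sets $A = A^{(0)} \supseteq A^{(1)} \supseteq \cdots \supseteq A^{(d-1)} =: B$ by putting, at the $k$-th step,
\[ A^{(k)} = A^{(k-1)} \cap \bigcap_{\emptyset \neq I \subsetneq [d]} \pi_{I^c}^{-1}(G_I^{(k)}), \]
where $G_I^{(k)} = \{x_{I^c} \in S^{I^c} : (A^{(k-1)})_{x_{I^c}} = \emptyset \textrm{ or } \mu^{\otimes|I|}((A^{(k-1)})_{x_{I^c}}) \ge 1 - \delta_k\}$. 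A Chebyshev-type argument based on Fubini's theorem (integrating $1 - \mu^{\otimes |I|}((A^{(k-1)})_{x_{I^c}})$ over the slices where it exceeds $\delta_k$) yields $\mu^{\otimes(d-|I|)}((G_I^{(k)})^c) \le (1 - \mu^{\otimes d}(A^{(k-1)}))/\delta_k$. Summing over the $2^d - 2$ choices of $I$ and telescoping over $k$ using the identity $\delta_1 \cdots \delta_k = \varepsilon^{1 - 1/2^k}$ gives $1 - \mu^{\otimes d}(A^{(k)}) \le C_d^k \varepsilon^{1/2^k}$. In particular, setting $B = A^{(d-1)}$ produces $\mu^{\otimes d}(B) \ge 1 - C_d \varepsilon^{1/2^{d-1}}$.

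For the empty-or-large property, fix any nonempty $I \subseteq [d]$ and $x_{I^c}$ with $B_{x_{I^c}} \ne \emptyset$. Non-emptiness at the final level forces $x_{I^c} \in G_I^{(d-1)}$, which yields $\mu^{\otimes|I|}((A^{(d-2)})_{x_{I^c}}) \ge 1 - \delta_{d-1} = 1 - \varepsilon^{1/2^{d-1}}$. The additional cuts introduced by intersecting with the $\pi_{I'^c}^{-1}(G_{I'}^{(d-1)})$-restrictions for $I' \ne I$ can be controlled uniformly in $x_{I^c}$ by an auxiliary application of Markov's inequality to the previous-level loss, their sum being bounded by $C_d \delta_{d-1}$. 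This yields $\mu^{\otimes|I|}(B_{x_{I^c}}) \ge 1 - C_d' \varepsilon^{1/2^{d-1}}$, and the case $I = [d]$ reduces to the measure bound on $B$. Measurability at every stage is immediate since $G_I^{(k)}$ is defined via measurable Fubini integrals, and the standing countably-generated assumption on $\mathcal{F}$ ensures that all relevant integrands and suprema remain measurable.

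The principal obstacle is the need to control \emph{uniformly in} $x_{I^c}$ the cuts that the $\pi_{I'^c}^{-1}(G_{I'}^{(k)})$-restrictions for $I' \ne I$ impose on the $I$-section of $A^{(k)}$, rather than merely on average. This is exactly what the multi-level construction achieves: at each level, Markov's inequality upgrades the average control provided by the previous level to uniform control, at the cost of removing a small additional set of $x_{I^c}$; the geometric schedule $\delta_k = \varepsilon^{1/2^k}$ balances measure loss against accumulated cross-restriction cuts so that both scale like $\varepsilon^{1/2^k}$ at each level, culminating after $d - 1$ iterations in the exponent $1/2^{d-1}$.
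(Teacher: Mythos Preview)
The paper does not give its own proof of this lemma; it is quoted as a special case of \cite[Lemma~11]{MR2294982}, so there is no in-paper argument to compare against beyond the citation.

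Your construction and the measure estimate $\mu^{\otimes d}(B)\ge 1-C_d\varepsilon^{1/2^{d-1}}$ are correct. The gap is in the empty-or-large verification, precisely at the step you yourself flag as ``the principal obstacle.'' Take $I'\neq I$ with both $J=I'^c\cap I$ and $K=I'^c\cap I^c$ nonempty. The level-$(d-1)$ cut at the fixed $x_{I^c}$ removes a cylinder over $\{x_J:(x_J,x_K)\notin G_{I'}^{(d-1)}\}$, whose $\mu^{\otimes|I|}$-measure equals $\mu^{\otimes|J|}(\{x_J:(x_J,x_K)\notin G_{I'}^{(d-1)}\})$. You need this to be $\le C\delta_{d-1}$ for the \emph{particular} $x_K$ determined by $x_{I^c}$, not on average. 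The only pointwise input available is $x_{I^c}\in G_I^{(d-1)}$, which gives $\mu^{\otimes|I|}((A^{(d-2)})_{x_{I^c}})\ge 1-\delta_{d-1}$, hence by Fubini $\int\bigl(1-\mu^{\otimes|I'|}((A^{(d-2)})_{(x_J,x_K)})\bigr)\,d\mu^{\otimes|J|}(x_J)\le \delta_{d-1}$. But membership in $G_{I'}^{(d-1)}$ is itself defined by the threshold $\delta_{d-1}$, so Markov yields only $\mu^{\otimes|J|}(\{x_J:\cdots\})\le \delta_{d-1}/\delta_{d-1}=1$, which is vacuous. Concretely, for $d=3$, $I=\{1,2\}$, $I'=\{1\}$: after your two iterations you must show $\mu(\{x_2:(x_2,x_3)\notin G_{\{1\}}^{(2)}\})\le C\varepsilon^{1/4}$, yet all you have is $\int(1-\mu((A^{(1)})_{(x_2,x_3)}))\,d\mu(x_2)\le\varepsilon^{1/4}$ against the same threshold $\varepsilon^{1/4}$.

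Your closing paragraph asserts that ``Markov's inequality upgrades the average control provided by the previous level to uniform control,'' but the previous level only delivers containment in $G_{I'}^{(d-2)}$ (via $(A^{(d-2)})_{x_K}\subseteq\pi_{I'^c}^{-1}(G_{I'}^{(d-2)})$), not in $G_{I'}^{(d-1)}$, and these sets are defined with different bases $A^{(d-3)}$ vs.\ $A^{(d-2)}$ and different thresholds, so the levels do not chain. A flat iteration with one global cut per level does not close this gap; the argument in \cite{MR2294982} uses a genuinely recursive construction in $d$, so that the section property is inherited from the $(d-1)$-dimensional statement rather than re-derived at the top level.
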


\begin{proof}[Proof of the 2nd part of Lemma \ref{le:approximation}]

Let us assume without loss of generality that $g$ is not the zero function. In particular, all the norms $\|\|g\|_\mathcal{J}\|_\infty$ are nonzero.

By the standing assumption that $\lambda$ is $\sigma$-finite, there exists a set $K \subseteq \XX$, with $\lambda(K) < \infty$, such that $\|g\Ind{\XX\setminus K^d}\|_2 < \varepsilon$. Set $\widetilde{g} = g\Ind{K^d}$. Note that for any $I$, $x_{I^c} \in \XX^{I^c}$ and $\mathcal{J} \in \PP_I$, $\|\widetilde{g}(x_{I^c},\cdot)\|_{\mathcal{J}} \le \|g(x_{I^c},\dot)\|_{\mathcal{J}}$. Thus, without loss of generality we may and will assume that $\lambda(\XX) < \infty$.

By the first part of the lemma there exists a sequence of functions $h_n \in \mathcal{E}_d$, such that $h_n \to g$ $\lambda^{\otimes d}$-a.e. Note that the assumption on boundedness of $\|\|g\|_\mathcal{J}\|_\infty$, specialized to $I = \emptyset$, $\mathcal{J} = \emptyset$, translates into $\|g\|_\infty <\infty$. We may therefore assume that $\|h_n\|_\infty \le \|g\|_\infty$. Thus, by the Fubini and Lebesgue dominated convergence theorems, for any $I \subseteq [d]$, $\mathcal{J} \in \PP_I$ and $x_{I^c}\in \XX^{I^c}$, we have
\begin{multline*}
  \Big| \|h_n(x_{I^c},\cdot)\|_{\mathcal{J}} - \|g(x_{I^c},\cdot)\|_\mathcal{J} \Big| \le \|h_n(x_{I^c},\cdot) - g(x_{I^c},\cdot)\|_\mathcal{J} \\
  \le \Big(\int_{\XX^I} |h(x) - g(x)|^2 d\lambda^{|I|}(x_I)\Big)^{1/2} \to 0, \textrm{ $\lambda^{|I^c|}$-a.e.}
\end{multline*}
In particular, the sets
\begin{displaymath}
  C_n := \Big\{x\in \XX^d\colon \forall_{ I\subseteq [d],\mathcal{J} \in \PP_I} \,\|h_n(x_{I^c},\cdot)\|_\mathcal{J} \le  2 \|\|g\|_\mathcal{J} \|_\infty\Big\}
\end{displaymath}
satisfy $\lambda^{\otimes d}(C_n^c) \to 0$ as $n\to \infty$.

For all $n$, we have
\begin{displaymath}
h_n =  \sum_{i_1,...,i_d=1}^{R_n} a_{i_1,...,i_d}^{(n)} \cdot \Ind{ \prod_{j=1}^d A^{(n)}_{i_j}}
\end{displaymath}
for some $R_n \in \N$, a collection $\mathcal{A}_n = \{A^{(n)}_1,\ldots,A^{(n)}_{R_n}\}$ of pairwise disjoint measurable sets and $a^{(n)}_{i_1,\ldots,i_d}$ such that $a^{(n)}_{i_1,\ldots,i_d} = 0$ whenever some of the indices $i_1,\ldots,i_d$ coincide. We may assume without loss of generality that $\bigcup_{i\le R_n} A^{(n)}_i = \XX$.

Denote by $\mathcal{G}_n$ the $\sigma$-field generated by $\mathcal{A}_n$ and note that the functions $x_{I^c} \mapsto \|h_n(x_{I^c},\cdot)\|_\mathcal{J}$ are $\mathcal{G}_n^{\otimes |I^c|}$-measurable. Thus, $C_n \in \mathcal{G}_n^{\otimes d}$.

By Lemma \ref{le:subset-extraction} applied to the measure $\lambda$ restricted to $\mathcal{G}_n$ and normalized, we can find $B_n \subseteq C_n$ such that
$B_n \in \mathcal{G}_n^{\otimes d}$, $\lambda^{\otimes d}(\XX^d \setminus B_n) \to 0$, and for all $I\subseteq [d]$,
\begin{align}\label{eq:bad-set}
  \sup\Big\{\lambda^{|I|}(\XX^{I} \setminus (B_n)_{x_{I^c}})\colon x_{I^c} \in \XX^{I^c}, (B_n)_{x_{I^c}} \neq \emptyset\Big\} \to 0
\end{align}
as $n\to \infty$.

Setting $\widetilde{h}_n = h_n \Ind{B_n}$ we have
\begin{displaymath}
  \|\widetilde{h}_n - g\|_2 \le \|h_n - g\|_2 + \|h_n \Ind{\XX^d\setminus B_n}\|_2 \le \|h_n - g\|_2 + \|g\|_\infty \Big(\lambda^{\otimes d}(\XX^d\setminus B_n)\Big)^{1/2} \to 0,
\end{displaymath}
i.e., $\widetilde{h}_n \to g$ in $L_2$. Moreover, for any $I \subseteq [d]$, $\mathcal{J} \in \PP_I$ and $x_{I^c} \in \XX^{I^c}$ we have either $\widetilde{h}_n(x_{I^c},\cdot) \equiv 0$ or
\begin{multline*}
  \|\widetilde{h}_n(x_{I^c},\cdot)\|_\mathcal{J} \le \|h_n(x_{I^c},\cdot)\|_\mathcal{J} + \|h_n(x_{I^c},\cdot) \Ind{\XX^{I} \setminus (B_n)_{x_{I^c}}}(\cdot)\|_\mathcal{J}\\
  \le 2\|\|g\|_\mathcal{J}\|_\infty + \|h_n(x_{I^c},\cdot) \Ind{\XX^{I} \setminus (B_n)_{x_{I^c}}}(\cdot)\|_\mathcal{J}\\
  \le 2\|\|g\|_\mathcal{J}\|_\infty + \|g\|_\infty \Big(\lambda^{|I|}(\XX^{I} \setminus (B_n)_{x_{I^c}})\Big)^{1/2} \le 3\|\|g\|_\mathcal{J}\|_\infty
\end{multline*}
for $n$ large enough. In the second inequality we use the observation  that if $(B_n)_{x_{I^c}} \neq \emptyset$ then  by $B_n \subseteq C_n$ we have $\|h_n(x_{I^c},\cdot)\|_\mathcal{J} \le 2\|\|g\|_\mathcal{J}\|_\infty$. The third inequality follows from the estimate $\|\cdot\|_\mathcal{J} \le \|\cdot\|_{L_2(\XX^{I})}$, $\|h_n\|_\infty \le \|g\|_\infty$ and a direct estimate. Finally, the fourth one follows from \eqref{eq:bad-set}.

Thus, for any $\varepsilon > 0$ there exists $n$ such that $\|\widetilde{h}_n - g\|_2 \le \varepsilon$ and $\|\|\widetilde{h}_n\|_\mathcal{J}\|_\infty \le 3\|\|g\|_\mathcal{J}\|_\infty$ for all $I \subseteq [d]$ and $\mathcal{J} \in \PP_I$. For fixed $n$ with this property we will now modify $\widetilde{h}_n$, to a function $h$, satisfying the requirements of the second part of Lemma \ref{le:approximation}.

Observe that since $B_n \in \mathcal{G}_n^{\otimes d}$, $\widetilde{h}_n$ is of the form  \eqref{eq:step-function}, with $R \in \N$ and the sets $A_i$ and coefficients $a_{i_1,\ldots,i_d}$ satisfying (ii) and (iii) except perhaps for the symmetry condition, which is not guaranteed by Lemma \ref{le:subset-extraction}.
Recall the formula \eqref{eq:symmetrization-definition}, defining the symmetrization of the function, and define
\begin{displaymath}
  h = \widetilde{h}_n^{sym} = \frac{1}{d!}\sum_{\sigma \in S_d} \widetilde{h}_n^\sigma = \sum_{i_1,\ldots,i_d=1}^{R} \Big(\frac{1}{d!}\sum_{\sigma \in S_d} a_{i_{\sigma(1)},\ldots,i_{\sigma(d)}}\Big) \Ind{\prod_{j=1}^d A_{i_j}},
\end{displaymath}
where for a permutation $\sigma$ of $[d]$, we define the function $\widetilde{h}_n^\sigma$ by
\begin{displaymath}
\widetilde{h}_n^\sigma(x_1,\ldots,x_d) = \widetilde{h}_n(x_{\sigma(1)},\ldots,x_{\sigma(d)}).
\end{displaymath}
Clearly, $h \in \mathcal{E}_d^s$.

Using the fact that $g$ is symmetric under permutations of arguments, we have
\begin{displaymath}
  \|h - g\|_2 \le \frac{1}{d!}\sum_{\sigma \in S_d} \|\widetilde{h}_n^\sigma - g\|_2 = \frac{1}{d!}\sum_{\sigma \in S_d} \|\widetilde{h}_n - g\|_2 \le \varepsilon.
\end{displaymath}

For any $I \subseteq [d]$, $\mathcal{J} = \{J_1,\ldots,J_k\} \in \PP_I$, $x\in \XX^d$ and $\sigma \in S_d$ define $x^{\sigma} = (x_{\sigma(1)},\ldots,x_{\sigma(d)})$ and $\sigma(\mathcal{J}) = \{\sigma(J_1),\ldots,\sigma(J_k)\} \in \PP_{\sigma(I)}$.
Then,
\begin{displaymath}
\| \widetilde{h}_n^{\sigma}(x_{I^c}, \cdot) \|_{\mathcal{J}} = \| \widetilde{h}_n((x^{\sigma})_{\sigma^{-1}(I^c)},\cdot)\|_{\sigma^{-1}(\mathcal{J})} \le 3\|\|g\|_{\sigma^{-1}(\mathcal{J})}\|_\infty = 3\|\|g\|_\mathcal{J}\|_{\infty},
\end{displaymath}
where in the last equality we used the symmetry of $g$. Thus, another application of the triangle inequality implies that $\|\|h\|_\mathcal{J}\|_\infty \le 3\|\|g\|_\mathcal{J}\|_\infty$, which ends the proof.
\end{proof}

\section{Regularity and reductions to the non-atomic case}\label{sec:reductions}

In this section we will provide some technical details concerning regularity of the process $I_t^{(d)}(g)$ as well as reduction to the case of non-atomic measure $\lambda$.

Let $\eeta$ be a Poisson point process on $\YY = \XX\times [0,\infty)$ with intensity $\lambda\otimes \mathcal{L}$ as described in Section \ref{sec:notation} (recall that $\mathcal{L}$ is the Lebesgue measure on $[0,\infty)$). Assume for now that $\eeta$ is proper, i.e.,
\begin{displaymath}
  \eeta = \sum_{n=1}^\kappa \delta_{X_n}
\end{displaymath}
for some $\N\cup \{\infty\}$-valued random variable $\kappa$ and a sequence of $\XX\times[0,\infty)$ valued random variables $X_n$.

Let $\eeta\,'$ be a marking of $\eeta$ by i.i.d random variables uniform on $[0,1]$, i.e., $\eeta\,'=\sum_{n=1}^{\kappa} \delta_{(X_n,Y_n)}$, where $Y_n$ are i.i.d., uniform on $[0,1]$. By the Marking Theorem (see, e.g., \cite[Theorem 5.6]{MR3791470}), $\eeta\,'$ is a Poisson process on $\YY\times [0,1] \simeq (\XX \times [0,1])\times [0,\infty)$, with intensity measure $\lambda' \otimes \mathcal{L}$, where $\lambda' = \lambda \otimes \mathcal L|_{[0,1]}$.

\smallskip

For a positive integer $k$ consider the canonical projection $P \colon (\YY\times [0,1])^k \to \YY^k$ (to simplify the notation we suppress the dependence on $k$).

For any $A \in \mathcal{Y}^{\otimes k}$, the equation \eqref{eq:factorial-proper} gives
\begin{displaymath}
  \eeta^{(k)}(A) = (\eeta\,')^{(k)}(P^{-1}(A))
\end{displaymath}
and as a consequence, for any $g \in L_1^s(\YY^k)$, we have the pointwise equality
\begin{displaymath}
  \int_{\YY^k} g d \eeta^{(k)} = \int_{(\YY\times[0,1])^k} g\circ P d(\eeta\,')^{(k)}.
\end{displaymath}

Clearly, an analogous equality holds for integration with respect to deterministic measures $(\lambda \otimes \mathcal{L})^{\otimes k}$ and $(\lambda' \otimes \mathcal{L})^{\otimes k}$. Thus, by  \eqref{eq:L_1-integral}, for any $f \in L_1^s(\YY^d)$ we have $I^{(d)}(f,\eeta) = I^{(d)}(f\circ P,\eeta\,')$ and by density this equality extends to all $f \in L_2^s(\YY^d)$. (We remark in passing that an analogous argument may be used to justify the second equality of \eqref{eq:integral-equalities}).
Let now $\eeta\,'_t$ be the restriction of $\eeta\,'$ to $(\XX\times [0,1])\times [0,t]$.
Since for any $t \ge 0$, $I^{(d)}(f,\eeta_t) = I^{(d)}(f\Ind{(\XX\times [0,t])^d},\eeta)$ and an analogous equality holds for $\eeta\,'$ we obtain in particular
that for any $g \in L_2^s(\XX^d)$, we have the almost sure equalities
\begin{align}\label{eq:atomic-non-atomic}
I^{(d)}_t(g) = I^{(d)}(g\circ Q,\eeta\,'_t) \textrm{ and } U_t(g) = U(g\circ Q, \eeta\,'_t),
\end{align}
where $Q$ is the canonical projection from $(\XX \times [0,1])^d$ onto $\XX^d$.

Our goals now are as follows. First we will show that in the non-atomic case and for proper $\eeta$, the process $(I^{(d)}_t(g))_{t\ge 0}$ admits a c\`adl\`ag modification.
Note that the existence of such a modification depends only on finite-dimensional distributions of a process. Moreover, by the construction of the stochastic integral (and the fact that the operation assigning factorial measures to a measure from $\NN$ is measurable, see \cite[Proposition 4.3]{MR3791470}), the finite dimensional distributions of the process $(I_t^{(d)}(g))_{t\ge 0}$ depend only on $\lambda$ and $g$.  Having the existence of a c\`adl\`ag version for the proper process with non-atomic intensity, one can use \eqref{eq:atomic-non-atomic} to conclude that such a modification exists in full generality (i.e., also for processes which are not proper and for measures with atoms).

To describe our next goal, let us recall the definition \eqref{eq:U-stat-chaos-kernels} of the functions $g_n$. We will show that the sets $\mathcal{K}_{g_n}$ given by \eqref{eq:cluster-set-intro} agree with $\mathcal{K}_{\widetilde{g}_n}$ where $\widetilde{g} = g\circ Q$, and that for all $k\le n \le d$ and
 $\mathcal{J} \in P_{\{k+1,\ldots,n\}}$,  $\|\|g_n\|_\mathcal{J}\|_\infty = \|\|\widetilde{g}_n\|_\mathcal{J}\|_\infty$. Since again, validity of both the Law of the Iterated Logarithm and tail inequalities for suprema of stochastic integrals and $U$-statistics for the c\`adl\`ag version of these processes, depend only on their finite dimensional distributions, this will complete the argument reducing the general case of our results to the non-atomic one.  Note that in the case of the LIL we use here the compactness of sets $\mathcal{K}_g$, which for square integrable $g$ is an easy consequence of the weak compactness of the unit ball in $L_2(\XX)$.

Let us thus pass to the question of regularity. To simplify the notation, we will not work with the process $\eeta\,'$ but simply assume that $\lambda$ has no atoms. Thus, we can apply Lemma \ref{le:approximation} and consider a sequence $h_m \in \mathcal{E}_d^s$, of the form
\begin{displaymath}
  h_m(x_1,...,x_d) = \sum_{i_1,...,i_d=1}^{R_m} a_{i_1,...,i_d}^{(m)} \cdot  \Ind{\prod_{j=1}^d A_{i_j}^{(m)}}
\end{displaymath}
converging to $g$ in $L_2^s(\XX^d)$. Here, as in Lemma \ref{le:approximation}, $A_1^{(m)}$ are pairwise disjoint, $\lambda(A_i^{(m)}) < \infty$ and $a_{i_1,\ldots,i_d}^{(m)} = 0$ whenever the indices $i_1,\ldots,i_d$ are not pairwise distinct. Note that $h_m \in L_1^s(\XX^d)$.

Recall from \eqref{eq:integral-disjoint-product} that for pairwise disjoint $C_1,\ldots,C_d \subseteq \XX$, with $\lambda(C_i) < \infty$,
\begin{displaymath}
  I^{(d)}_t(\Ind{C_1\times \cdots \times C_d}) = \prod_{i=1}^d(\eeta(C_i\times[0,t])-t\lambda(C_i)).
\end{displaymath}

The factors in the above product are independent, compensated Poisson processes on $[0,\infty)$ with intensities $\lambda(C_1),\ldots,\lambda(C_d)$. The c\`adl\`ag property follows from the assumption that $\eeta$ is proper, since with probability one for every $t \ge 0$, in $C_i\times [0,t]$ there are only finitely many points of $\eeta$. Using the independence properties of $\eeta$ it is now easy to see that $(I^{(d)}_t(\Ind{C_1\times \cdots \times C_d}))_{t\ge 0}$ is a c\`adl\`ag martingale with respect to the filtration $\mathcal{F}_t = \sigma(\eta_t)$. Thus, the same is true for
\begin{displaymath}
  I^{(d)}_t(h_m) = \sum_{i_1,...,i_d=1}^{R_m} a_{i_1,...,i_d}^{(m)} \cdot \prod_{j=1}^d \Big(\eeta\Big(A_{i_j}^{(m)}\times [0,t]\Big) - t\lambda(A_{i_j}^{(m)})\Big).
\end{displaymath}

For any fixed $T > 0$, by Doob's inequality we have
\begin{displaymath}
  \E \sup_{t\le T} |I^{(d)}_t(h_m) - I^{(d)}_t(h_n)|^2 \le 4 \E |I^{(d)}_T(h_m) - I^{(d)}_T(h_n)|^2 = 4 d!T^{d/2}\|h_n - h_m\|_2.
\end{displaymath}

Thus, using the $L_2$ convergence of $h_m$ and the Borel--Cantelli lemma, one can select a subsequence $m_n$ such that with probability one $I^{(d)}_t(h_{m_n})$ converge uniformly on any compact interval of $[0,\infty)$. As the c\`adl\`ag property is preserved under uniform limit, and since for any fixed $t$, $I^{(d)}_t(h_m)$ converges to $I^{(d)}_t(g)$ in $L_2$, the limiting process is a c\`adl\`ag modification of $(I^{(d)}_t(g))_{t \ge 0}$. We remark that to obtain the existence of such a modification we could also rely on general results concerning modifications of martingales, but since we would still need to show that the process in question is a martingale and, e.g., verify its stochastic right-continuity, the above approach seems more straightforward.

Let us now pass to the question of equivalence of $\mathcal{K}_{g_n}$ and $\mathcal{K}_{\widetilde{g}_n}$ for $\widetilde{g} = g\circ Q$, as well as $\|\|g_n\|_{\mathcal{J}}\|_\infty$ and $\|\|\widetilde{g}_n\|_\mathcal{J}\|_\infty$ (now we go back to the setting introduced at the beginning of this section and do not assume that $\lambda$ is non-atomic). Since $\widetilde{g}_n = g_n\circ Q$, it is enough to consider the case of $g_d = g$. To obtain  the equality of norms it is enough to show that for any partition $\mathcal{J} = \{J_1,\ldots,J_k\} \in \mathcal{P}_{[d]}$,
\begin{multline}\label{eq:equality-of-sets}
\Big\{\int_{\XX^d} g(x_1,\ldots,x_d)\prod_{i=1}^k \varphi_i(x_{J_i})d\lambda^{\otimes d}(x_1,\ldots,x_d)\colon\  \varphi_{i}\colon \XX^{J_i}\to \R, \|\varphi_i\|_2\le 1\Big\}\\
= \Big\{\int_{(\XX\times[0,1])^d} g(x_1,\ldots,x_d)\prod_{i=1}^k \psi_i(x_{J_i},t_{J_i})d(\lambda')^{\otimes d}(x_1,t_1,\ldots,x_d,t_d)\colon \\
\psi_{i}\colon (\XX\times[0,1])^{J_i}\to \R,
\|\psi_i\|_2\le 1\Big\}
\end{multline}
(for $\mathcal{J} \in \mathcal{P}_I$ with $I \subsetneq [d]$ we apply this to $g(x_{I^c},\cdot)$ and $|I|$ instead of $d$).

An analogous statement, for $\mathcal{J}$ being the partition into singletons and an additional restriction that all $\varphi_i$'s are equal and so are all $\psi_i$'s, amounts to the equality of $\mathcal{K}_g$ and $\mathcal{K}_{\widetilde{g}}$. Since the arguments are the same, we will restrict to proving \eqref{eq:equality-of-sets}.

The inclusion $\subseteq$ is clear since one can always take $\psi_i = \varphi_i\circ Q$, which does not depend on $t_i$'s and take into account the fact that $\lambda' = \lambda \otimes \mathcal{L}\vert_{[0,1]}$, which (since $\mathcal{L}([0,1])=1$) gives $\|\psi_i\|_2 = \|\varphi_i\|_2$. In the other direction one simply takes
\begin{displaymath}
\varphi_i(x_{J_i}) = \int_{[0,1]^{J_i}} \psi_i(x_{J_i},t_{J_i})dt_{J_i}
\end{displaymath}
and observes that with such a choice the integrals on the left- and right-hand side of \eqref{eq:equality-of-sets} coincide, while by Jensen's inequality (we again use $\mathcal{L}([0,1]) = 1$), $\|\varphi_i\|_2 \le \|\psi_i\|_2 \le 1$.

Thus, \eqref{eq:equality-of-sets} indeed holds, which ends the reduction step.

\bibliographystyle{amsplain}
\bibliography{Poisson-U-stat}
\end{document}